\documentclass[final,12pt]{colt2026} 

\usepackage{booktabs} 
\usepackage{algorithm}
\usepackage{algorithmic}
\usepackage{wrapfig}
\usepackage{setspace}

\newtheorem{setting}[theorem]{Setting}

\usepackage{amsfonts,amssymb,stmaryrd,bbm }

\newcommand{\bfg}{\mathbf{g}} \newcommand{\bfh}{\mathbf{h}}

 \newcommand{\E}{\mathcal{E}} \newcommand{\F}{\mathcal{F}}
  
\newcommand{\J}{\mathcal{J}} \newcommand{\K}{\mathcal{K}} 
  \newcommand{\Oc}{\mathcal{O}}
  \newcommand{\R}{\mathcal{R}}
  
  \newcommand{\X}{\mathcal{X}}

 \newcommand{\EE}{\mathbb{E}}

\newcommand{\PP}{\mathbb{P}}  \newcommand{\RR}{\mathbb{R}}

\renewcommand{\d}{\mathrm{d}} 

\newcommand{\e}{\varepsilon}

\newcommand{\KL}{\mathrm{KL}}
\newcommand{\trans}{\mathrm{trans}}
\title[Fast and Large-Scale UOT via its Semi-Dual and Adaptive Gradient Methods]{Fast and Large-Scale Unbalanced Optimal Transport via its Semi-Dual and Adaptive Gradient Methods}
\usepackage{times}
\usepackage[toc,page,header]{appendix}
\usepackage{minitoc}
\usepackage[utf8]{inputenc}

\renewcommand \thepart{}
\renewcommand \partname{}

\coltauthor{%
 \Name{Ferdinand Genans} \Email{genans.ferdinand@gmail.com}\\
 \addr  LPSM, Sorbonne Université, Paris, France
}

\begin{document}

\maketitle

\begin{abstract}%
Unbalanced Optimal Transport (UOT) has emerged as a robust relaxation of standard Optimal Transport, particularly effective for handling outliers and mass variations. However, scalable algorithms for UOT, specifically those based on Gradient Descent (SGD), remain largely underexplored. In this work, we address this gap by analyzing the semi-dual formulation of Entropic UOT and demonstrating its suitability for adaptive gradient methods. While the semi-dual is a standard tool for large-scale balanced OT, its geometry in the unbalanced setting appears ill-conditioned under standard analysis. Specifically, worst-case bounds on the marginal penalties using $\chi^2$ divergence suggest a condition number scaling with $n/\varepsilon$, implying poor scalability. In contrast, we show that the local condition number actually scales as $\mathcal{O}(1/\varepsilon)$, effectively removing the ill-conditioned dependence on $n$. Exploiting this property, we prove that SGD methods adapt to this local curvature, achieving a convergence rate of $\mathcal{O}(n/\varepsilon T)$ in the stochastic and online regimes, making it suitable for large-scale and semi-discrete applications. Finally, for the full batch discrete setting, we derive a nearly tight upper bound on local smoothness depending solely on the gradient. Using it to adapt step sizes, we propose a modified Adaptive Nesterov Accelerated Gradient (ANAG) method  on the semi-dual functional and prove that it achieves a local complexity of $\mathcal{O}(n^2\sqrt{1/\varepsilon}\ln(1/\delta))$.
\end{abstract}

\begin{keywords}%
    Unbalanced Optimal Transport, Entropic Regularization, Convex Optimization, Gradient Descent
\end{keywords}
\section{Introduction}
\label{sec:intro}

Optimal Transport (OT) has firmly established itself as a fundamental tool in machine learning and statistics \citep{peyre2019computational}, offering a geometrically meaningful way to compare probability distributions. Its applications span a vast landscape, including domain adaptation \citep{courty2014domain}, generative modeling \citep{an2019ae, li2023dpm}, and biological data analysis \citep{schiebinger2019optimal}. The widespread adoption of OT is largely attributable to the computational breakthrough of entropic regularization \citep{cuturi2013sinkhorn}, which enabled the use of efficient Sinkhorn-type scaling algorithms.

Building upon this foundation, Unbalanced OT (UOT) has emerged as a flexible generalization specifically designed to handle scenarios involving outliers, mass variations, or partial matching. By relaxing the strict mass conservation constraints of standard OT using $\varphi$-divergences, UOT accommodates datasets with varying total masses \citep{liero2018optimal, chizat2018unbalanced}.

Despite the success of UOT, its algorithmic landscape remains heavily skewed toward generalized Sinkhorn methods \citep{chizat2018scaling, sejourne2023unbalanced}. This is in stark contrast to balanced OT, where semi-dual formulations have become the standard for large-scale \citep{genevay2016stochastic, seguy2017large} and semi-discrete applications \citep{MerigotNewtonSemiDiscrete}. Surprisingly, semi-dual based algorithms have neither been widely proposed nor analyzed for the UOT problem. The notable exceptions are the works of \citet{vacher2023semi}, where the statistical properties of unregularized continuous UOT are analyzed via its semi-dual formulation, and \citet{choi2023generative}, who successfully used SGD on the unregularized semi-dual for generative model training. However, both the optimization geometry of the regularized semi-dual and the theoretical convergence guarantees of SGD schemes in this setting remain largely unexplored.

This gap likely stems from the theoretical "stiffness" of the unbalanced formulation. Unlike the Sinkhorn algorithm, which sees its computational complexity improve from $\mathcal{O}(n^2/\varepsilon^2)$ to $\mathcal{O}(n^2/\varepsilon)$ in the unbalanced setting, gradient-based methods on the semi-dual appear to suffer from poor conditioning. Furthermore, the standard use of Kullback-Leibler (KL) marginal penalties is often taken for granted, as it is a prerequisite for the coordinate-descent updates of Sinkhorn. This hegemony has likely overshadowed the exploration of alternative divergences for Entropic UOT. In this work, we demonstrate that employing the $\chi^2$ divergence on the target measure is actually a key factor in mitigating ill-conditioning, rendering the problem tractable for first-order methods.

\textbf{Contributions. } We challenge the perspective that UOT is ill-suited for gradient methods by providing a comprehensive analysis of the Entropic UOT semi-dual geometry. Our contributions are threefold:

\noindent $\bullet$  \textbf{Theoretical Analysis of the Entropic Semi-Dual Geometry:} We provide a thorough analysis of the Entropic UOT semi-dual. Our key observation is that the local condition number at the optimizer scales strictly as $\mathcal{O}(1/\varepsilon)$, effectively removing the dependence on the problem size $n$. Furthermore, we identify key properties enabling gradient schemes to adapt to this favorable geometry, specifically establishing the generalized self-concordance, global smoothness bounds, and $(L_0, L_1)$-type smoothness of the semi-dual.

\noindent $\bullet$ \textbf{Stochastic Regime:} We analyze the Projected Averaged SGD (PASGD) for the UOT semi-dual. We prove that PASGD naturally adapts to the benign local geometry, achieving a convergence rate of $\mathcal{O}(n/\varepsilon T)$. This yields a lightweight solver suitable for massive datasets.

\noindent $\bullet$ \textbf{Deterministic Regime:} For the full-batch discrete setting, we leverage a tight, data-dependent upper bound on the local smoothness. We utilize this bound to design an Adaptive Nesterov Accelerated Gradient (NAG) method. Unlike standard acceleration, which relies on conservative global constants, our method adjusts its step size dynamically, achieving a local complexity of $\mathcal{O}(n^2\sqrt{1/\varepsilon}\ln(1/\delta))$.

\section{Background on Unbalanced Optimal Transport}
\label{sec:background_uot}

Let $(\mathcal X,d_{\mathcal X})$ and $(\mathcal Y,d_{\mathcal Y})$ be Polish spaces.
We consider finite nonnegative measures $\mu\in\mathcal M_+(\mathcal X)$ and $\nu\in\mathcal M_+(\mathcal Y)$,
and a continuous non-negative cost $c:\mathcal X\times\mathcal Y\to\mathbb R$.
For a coupling $\pi\in\mathcal M_+(\mathcal X\times\mathcal Y)$, we denote its marginals by
$\pi_1\in\mathcal M_+(\mathcal X)$ and $\pi_2\in\mathcal M_+(\mathcal Y)$. 

\textbf{Unbalanced OT with entropic regularization.}
Unbalanced optimal transport (UOT) compares $\mu$ and $\nu$ while allowing mass variation:
instead of enforcing $\pi_1=\mu$ and $\pi_2=\nu$, it penalizes marginal mismatch.
With entropic regularization $\varepsilon>0$ and penalty weights $\rho_1,\rho_2>0$:
\begin{equation}
\label{eq:entropic_uot_primal}
\mathrm{UOT}^{\rho_1, \rho_2}_{\varepsilon,c}(\mu,\nu) := \min_{\pi\in\mathcal M_+(\mathcal X\times\mathcal Y)}
\int_{\mathcal X\times\mathcal Y} c\d\pi
+\varepsilon\,\KL(\pi\mid \mu\otimes \nu)
+\rho_1\, D_{1}(\pi_1\mid \mu)
+\rho_2\, D_{2}(\pi_2\mid \nu)\ .
\end{equation}
The marginal penalties $D_1$ and $D_2$ are chosen as Csisz\'ar ($f$-)divergences \citep{csiszar1967information,ali1966general}, which are of the form
\[
D_{\varphi}(\alpha\mid \beta)
:= 
\displaystyle \int \varphi\!\left(\frac{d\alpha}{d\beta}\right)\, d\beta,\quad \text{for } \alpha\ll \beta, \quad \text{otherwise :} +\infty.
\]
for $\varphi:[0,\infty)\to\mathbb R\cup\{+\infty\}$ convex with $\varphi(1)=0$.  The usual choices in OT are 
(i) KL, with $\varphi_{\KL}(t)=t\ln t-t+1$ and $D_{\varphi_{\KL}}=\KL$,
and (ii) the quadratic (Pearson $\chi^2$) divergence, with $\varphi_{\chi^2}(t)=\tfrac12(t-1)^2$ and $D_{\varphi_{\chi^2}}=D_{\chi^2}$.

\noindent The joint term $\KL(\pi\mid\mu\otimes\nu)$ is the standard entropic regularization, which makes the objective smoother
and enables efficient iterative solvers (Sinkhorn-type schemes in OT \citep{cuturi2013sinkhorn} and in UOT \citep{chizat2018scaling}).
Finally, $\varepsilon\to 0$ recovers the unregularized UOT objective, while $\rho_1,\rho_2\to+\infty$ formally enforces balanced marginals and recovers (entropic) OT.

\textbf{Entropic dual.}
Entropic UOT admits a dual formulation over measurable potentials $f:\mathcal X\to\mathbb R$ and $g:\mathcal Y\to\mathbb R$:
\begin{align*}
    \mathrm{UOT}^{\rho_1, \rho_2}_{\varepsilon,c}(\mu,\nu) = \sup_{f,g}\Bigg\{\!\!
    -\varepsilon\!\!\int\!\!
    \exp\left(\tfrac{f+g-c}{\varepsilon}\right)\d\mu\d\nu-\rho_1\!\!\int_{\mathcal{X}}\!\varphi_1^c\left(-\tfrac{f}{\rho_1}\right)\d\mu
    -\rho_2\!\!\int_{\mathcal Y}\!\varphi_2^c\left(-\tfrac{g}{\rho_2}\right)\d\nu\Bigg\},
\end{align*}
up to additive constants independent of $(f,g)$.
Here $\varphi^c:\mathbb R\to\mathbb R\cup\{+\infty\}$ is the convex conjugate $ \varphi^c(s)=\sup_{t\ge 0}\{st-\varphi(t)\}$.
In particular:
\begin{align*}
    \varphi_{\KL}^c(s)&=e^{s}-1,\qquad s\in\mathbb R, \\
    \varphi_{\chi^2}^c(s)&= s+\tfrac12 s^2 \quad \text{ if } s\ge -1\ ,   \quad \text{otherwise }   -\tfrac{1}{2}\ .
\end{align*}

\paragraph{Dual optimizers and induced coupling.}
The dual potentials explicitly parameterize the optimal coupling: if $(f^\star,g^\star)$ maximizes the dual,
then $\pi^\star\ll \mu\otimes\nu$ and
\begin{equation}
\label{eq:pi_from_dual}
\frac{d\pi^\star}{d(\mu\otimes\nu)}(x,y)
=
\exp\left(\frac{f^\star(x)+g^\star(y)-c(x,y)}{\varepsilon}\right).
\end{equation}

\section{Entropic UOT Semi-Dual: Derivation and Properties}
\label{sec:semi-dual}

Having established the primal and dual formulations, we now specialize to the, at least, semi-discrete paradigm. In this setting, the target measure $\nu$ is discrete (or has been discretized), while the source measure $\mu$ remains abstract (continuous or discrete). The choice of the target measure being discrete, rather than the source one, is arbitrary here. 
\vspace{-0.3em}
\begin{setting}\label{setting::semi_discrete}
    We assume $\nu$ is a discrete positive measure supported on $n$ points $\{y_1, \dots, y_n\} \subset \mathcal{Y}$:
    \[
    \nu = \sum_{j=1}^n \beta_j \delta_{y_j}, \quad \text{with } \beta_j > 0.
    \]
    To ensure stability, we assume the weights are balanced relative to the resolution $n$. Specifically, there exist constants $b/B \approx 1$ such that for all $j$, $b/n \le \beta_j \le B/n$.
\end{setting}

\subsection{Semi-Dual Formulation and Gradient}

The semi-dual functional is derived by explicitly solving the maximization of the dual objective with respect to the source potential $f$. This reduces the problem to an unconstrained optimization over the target potential vector $\mathbf{g} = (g_1, \dots, g_n) \in \mathbb{R}^n$. Before introducing the semi-dual functional, we define the following auxiliary quantities for any $x \in \mathcal{X}$:
\begin{align*}
B_j(x,\mathbf{g}) := \beta_j \exp\left(\tfrac{g_j - c(x,y_j)}{\varepsilon}\right), 
Z(x,\mathbf{g}) := \sum_{j=1}^n B_j(x,\mathbf{g})\ ,  \quad w_j(x, \mathbf{g}) := \tfrac{B_j(x, \mathbf{g})}{Z(x, \mathbf{g})}.
\end{align*}

\noindent We maintain a fixed target penalty $D_2 = D_{\chi^2}$ but consider two standard options for the source penalty $D_1$:

\noindent $\bullet$ \textbf{(KL-source)} $D_1 = \text{KL}$, allowing for a closed-form elimination of $f$.

\noindent $\bullet$ \textbf{($\chi^2$-source)} $D_1 = D_{\chi^2}$, which involves the Lambert $W$ function.

\noindent To make the presentation easier, in the core of the paper, we fix $D_1 = \KL$, since it has a slightly easier form to present and derive the theorems. However, all the results here are the same for  $D_1 = \chi^2$ , up to constants. The proofs for the case  $D_1 = \chi^2$ are also in the appendix. 

\begin{proposition}[Semi-Dual Objective and Gradient]
\label{prop:unified_semi_dual}(Proof in Appendix \ref{appendix:semi_dual}.)
With $\alpha := \frac{\varepsilon}{\varepsilon + \rho_1}$, the  semi-dual objective $\mathcal{J}: \mathbb{R}^n \to \mathbb{R}$ is
\begin{equation}
\label{eq:semidual_unified}
\mathcal{J}(\mathbf{g}) = (\rho_1 + \varepsilon) \int_{\mathcal{X}} Z(x,\mathbf{g})^\alpha  \, d\mu(x) + \sum_{j=1}^n \beta_j \left( \tfrac{g_j^2}{2\rho_2} - g_j \right).
\end{equation}
Its gradient with respect to the $k$-th component is given by:
\begin{equation}
\label{eq:grad_unified}
\nabla_k \mathcal{J}(\mathbf{g}) = \int_{\mathcal{X}}   Z(x,\mathbf{g})^\alpha  \, w_k(x, \mathbf{g}) \, d\mu(x) + \tfrac{\beta_k}{\rho_2}g_k - \beta_k.
\end{equation}
\end{proposition}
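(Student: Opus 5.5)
The plan is to start from the entropic dual of Section~\ref{sec:background_uot} with $\varphi_1=\varphi_{\KL}$ and $\varphi_2=\varphi_{\chi^2}$, and to eliminate $f$ pointwise in $x$. I will then take the semi-dual $\mathcal J$ to be minus the resulting partial supremum, so that maximizing the dual becomes minimizing $\mathcal J$. Under Setting~\ref{setting::semi_discrete}, $\int \exp((f+g-c)/\varepsilon)\,d\nu=e^{f(x)/\varepsilon}Z(x,\mathbf g)$. The dual objective therefore becomes
\[
\int_{\mathcal X}\Big(-\varepsilon e^{f(x)/\varepsilon}Z(x,\mathbf g)-\rho_1 e^{-f(x)/\rho_1}\Big)d\mu(x)+\rho_1\mu(\mathcal X)-\rho_2\sum_j\beta_j\,\varphi_{\chi^2}^c\!\big(-g_j/\rho_2\big).
\]
The supremum over measurable $f$ can be taken pointwise, because the integrand depends on $f$ only through $f(x)$. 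The maximizing $f$ will be measurable, since $Z$ is continuous in $x$.

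\textbf{Pointwise maximization.} For fixed $x$, the map $t\mapsto-\varepsilon e^{t/\varepsilon}Z-\rho_1e^{-t/\rho_1}$ is strictly concave. Its stationarity condition $e^{t/\varepsilon}Z=e^{-t/\rho_1}$ gives
\[
f(x)=-\tfrac{\varepsilon\rho_1}{\varepsilon+\rho_1}\ln Z(x,\mathbf g).
\]
Substituting back, both $e^{f/\varepsilon}Z$ and $e^{-f/\rho_1}$ equal $Z^{\alpha}$ with $\alpha=\varepsilon/(\varepsilon+\rho_1)$. The optimal value is thus $-(\varepsilon+\rho_1)Z(x,\mathbf g)^\alpha$. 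After negation and dropping the additive constant $\rho_1\mu(\mathcal X)$, this yields the first term of \eqref{eq:semidual_unified}.

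\textbf{Target term.} For $g_j\le\rho_2$ we have $s=-g_j/\rho_2\ge-1$, so $-\rho_2\varphi_{\chi^2}^c(s)=g_j-g_j^2/(2\rho_2)$. Negating gives the quadratic term $\beta_j\big(g_j^2/(2\rho_2)-g_j\big)$. The one point requiring care is the region $g_j>\rho_2$, where $\varphi_{\chi^2}^c$ is constant. I would handle it by monotonicity. The map $g_j\mapsto Z^\alpha$ is increasing, and $g_j^2/(2\rho_2)-g_j$ is increasing on $[\rho_2,\infty)$. Hence both the exact semi-dual and its quadratic extension \eqref{eq:semidual_unified} agree on $\{g_j\le\rho_2\ \forall j\}$, and both are nondecreasing in each $g_j$ beyond $\rho_2$. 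It follows that the minimizers and minimal values coincide, and we may use the smooth quadratic form on all of $\mathbb R^n$. This justifies the stated $\mathcal J$ "up to additive constants".

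\textbf{Gradient.} Since $\partial_{g_k}B_j=\delta_{jk}B_k/\varepsilon$, we get
\[
\partial_{g_k}Z^\alpha=\tfrac{\alpha}{\varepsilon}Z^{\alpha-1}B_k=\tfrac{\alpha}{\varepsilon}Z^\alpha w_k .
\]
Multiplying by $\rho_1+\varepsilon$ gives the prefactor $\alpha(\rho_1+\varepsilon)/\varepsilon=1$, which produces \eqref{eq:grad_unified} once the quadratic term is differentiated. The main technical step is justifying differentiation under the integral, which I would do by dominated convergence. Because $c\ge0$, we have $B_j\le\beta_je^{g_j/\varepsilon}$. Hence on any bounded set of $\mathbf g$ the quantities $Z^\alpha$ and $Z^\alpha w_k\le Z^\alpha$ are bounded uniformly in $x$, and they are integrable against the finite measure $\mu$.
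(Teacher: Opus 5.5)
Your proposal is correct and follows the paper's proof in Appendix~\ref{appendix:semi_dual}. Both eliminate $f$ pointwise via $f^\star=-\frac{\rho_1\varepsilon}{\rho_1+\varepsilon}\ln Z$ with value $\rho_1-(\rho_1+\varepsilon)Z^\alpha$, use a monotonicity argument to replace the $\chi^2$ conjugate by its quadratic branch, and differentiate with the identity $\alpha(\rho_1+\varepsilon)/\varepsilon=1$. Two of your steps are more careful than the paper's. You check that the quadratic extension is also nondecreasing in each $g_j$ beyond $\rho_2$, so the extended and exact semi-duals have the same minimizers; the paper only shows that dual maximizers satisfy $g_j^\star\le\rho_2$. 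You also justify differentiating under the integral by dominated convergence, which the paper leaves implicit.
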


\noindent \textbf{Remark:} While we allow different divergences for the source measure, fixing the $\chi^2$ divergence for the target marginal is compulsory to have a data independent strong convexity. For instance, this is not (always) the case when $D_2 = \KL$, we refer to Section \ref{subsec:glob_and_loc_curv} for more details.

\subsection{The First Order Condition Keystone}

The efficiency of gradient-based algorithms is governed by the geometry of the objective function. While the strong convexity of $\mathcal{J}$ is evident from the target $\chi^2$ divergence term, the smoothness is more subtle. We show here that by analyzing the smoothness locally, we can derive a much tighter conditioning bound. To do so, we identify the \textit{transport} part of the objective, which is the key component of our analysis:
\begin{align*}
\mathcal{J}_{\mathrm{trans}}(\mathbf{g}) = (\rho_1 + \varepsilon) \int_{\mathcal{X}} Z(x,\mathbf{g})^\alpha \, d\mu(x);  \quad    
[\nabla \mathcal{J}_{\mathrm{trans}}(\mathbf{g})]_k = \int_{\mathcal{X}} Z(x,\mathbf{g})^\alpha \, w_k(x, \mathbf{g}) \, d\mu(x)\ .
\end{align*}
From it, we state our key smoothness result, which holds for both source divergences.

\begin{theorem}[Smoothness Bound via Gradient Transport]\label{thm:smoothness_unified} 
(Proof in Appendix \ref{app::proof_thm_smoothness_unified}.) 
For all $\mathbf{g} \in \mathbb{R}^n$, the operator norm of the Hessian satisfies:
\begin{equation}
    \|\nabla^2 \mathcal{J}(\mathbf{g})\|_{\mathrm{op}} \le \frac{1}{\varepsilon} \|\nabla \mathcal{J}_{\mathrm{trans}}(\mathbf{g})\|_\infty + \frac{\beta_{\max}}{\rho_2}.
\end{equation}
\end{theorem}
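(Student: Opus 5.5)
We compute the Hessian explicitly and split it into the transport part and the quadratic target-penalty part. The penalty $\sum_j \beta_j(g_j^2/(2\rho_2)-g_j)$ contributes the diagonal matrix $\mathrm{diag}(\beta_j/\rho_2)$, whose operator norm is $\beta_{\max}/\rho_2$. By the triangle inequality, the claim then reduces to
\[
\|\nabla^2\mathcal{J}_{\mathrm{trans}}(\mathbf{g})\|_{\mathrm{op}}\le \tfrac1\varepsilon\|\nabla\mathcal{J}_{\mathrm{trans}}(\mathbf{g})\|_\infty .
\]

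For the transport part, write the gradient integrand as $Z^\alpha w_k = Z^{\alpha-1}B_k$, and use $\partial_{g_l}B_k=\frac1\varepsilon\delta_{kl}B_k$ and $\partial_{g_l}Z=\frac1\varepsilon B_l$. Differentiating under the integral (justified by dominated convergence, since everything is locally bounded in $\mathbf{g}$) gives
\[
\nabla^2_{kl}\mathcal{J}_{\mathrm{trans}}(\mathbf{g})=\frac1\varepsilon\int_{\mathcal X} Z^\alpha\bigl(\delta_{kl}w_k-(1-\alpha)w_kw_l\bigr)\,d\mu .
\]
So, pointwise in $x$, the Hessian is $\frac1\varepsilon Z^\alpha H(x)$ with $H(x)=\mathrm{diag}(w)-(1-\alpha)ww^\top$.

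Since $\alpha\in(0,1)$, $H(x)$ is the softmax-type covariance $\mathrm{diag}(w)-ww^\top$ plus the PSD term $\alpha ww^\top$, hence PSD. Also $H(x)\preceq \mathrm{diag}(w)$ because $(1-\alpha)ww^\top\succeq 0$. Integrating against the nonnegative weight $Z^\alpha\,d\mu$ gives
\[
0\preceq\nabla^2\mathcal{J}_{\mathrm{trans}}(\mathbf{g})\preceq \frac1\varepsilon\,\mathrm{diag}\Bigl(\int Z^\alpha w_k\,d\mu\Bigr)_k=\frac1\varepsilon\,\mathrm{diag}\bigl(\nabla\mathcal{J}_{\mathrm{trans}}(\mathbf{g})\bigr).
\]
For a PSD matrix the operator norm is the top eigenvalue, which is at most that of the dominating diagonal matrix, i.e. $\max_k[\nabla\mathcal{J}_{\mathrm{trans}}]_k=\|\nabla\mathcal{J}_{\mathrm{trans}}\|_\infty$ (the entries are nonnegative). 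This is the desired bound.

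The computation is routine; the step needing care is the Loewner sandwich, specifically checking that $1-\alpha\ge 0$ so the rank-one correction can be dropped. For the $\chi^2$-source case the exponent $\alpha$ is replaced by a Lambert-$W$ expression $\Phi(Z)$. There I would repeat the computation with a general scalar function $\Phi$, obtaining the integrand $\Phi'(Z)Z\,\mathrm{diag}(w)+(\Phi''(Z)Z^2)ww^\top$ up to the factor $1/\varepsilon$. I would then show $\Phi''\le 0$ (concavity in $Z$, which should follow from the conjugate of $\varphi_{\chi^2}$) together with PSD-ness, and conclude the same way.
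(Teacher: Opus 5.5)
Your proposal is correct and follows essentially the same route as the paper: write the Hessian as $\frac{1}{\varepsilon}\int \sigma\,(\mathrm{diag}(w)-c(x)\,ww^\top)\,d\mu+\frac{1}{\rho_2}\mathrm{diag}(\beta)$ with $c(x)\ge 0$, drop the rank-one term in the Loewner order, and recognize the remaining diagonal as $\frac1\varepsilon\nabla\mathcal{J}_{\mathrm{trans}}$. You use the triangle inequality where the paper keeps one diagonal, which gives the same bound, and you state explicitly the PSD-ness the paper uses implicitly. In the $\chi^2$-source case, the facts you still need are exactly the paper's computation $c(x)=W(U)/(1+W(U))\in[0,1)$, which gives $\Phi''\le 0$ and PSD-ness at once; also, the prefactor in your general-$\Phi$ Hessian is $1/\varepsilon^2$, not $1/\varepsilon$, and this is what makes the diagonal integrate to $\frac1\varepsilon\nabla\mathcal{J}_{\mathrm{trans}}$.
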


\noindent This theorem is our \textit{keystone}: it transforms the problem of bounding curvature into the problem of bounding the gradient. This link will be further leveraged in the next section. 

The next proposition is straightforward to derive yet fundamental; it establishes that the optimal potentials are naturally confined to a region where the geometry is well-behaved.

\begin{proposition}[First-Order Optimality]
\label{prop:optimality_constraints}
At the global minimizer $\mathbf{g}^\star$, the condition $\nabla \mathcal{J}(\mathbf{g}^\star) = 0$ implies $[\nabla \mathcal{J}_{\mathrm{trans}}(\mathbf{g}^\star)]_k = \beta_k ( 1 - g_k^\star / \rho_2 )$. Since $\nabla \mathcal{J}_{\mathrm{trans}} \ge 0$, the optimizer satisfies the automatic box constraint:
\begin{equation}
    g_k^\star \le \rho_2, \quad \forall k \in \{1, \dots, n\}.
\end{equation}
\end{proposition}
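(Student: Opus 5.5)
The plan is to combine the explicit gradient formula of Proposition~\ref{prop:unified_semi_dual} with the positivity of every quantity appearing in the transport part. First we make sure a global minimizer $\mathbf{g}^\star$ exists and is unique. The quadratic term $\sum_j \beta_j\bigl(\tfrac{g_j^2}{2\rho_2} - g_j\bigr)$ is $(\beta_{\min}/\rho_2)$-strongly convex. The transport term $\mathcal{J}_{\mathrm{trans}}$ is convex, because for $\alpha\in(0,1]$ the map $\mathbf{g}\mapsto Z(x,\mathbf{g})^\alpha = \exp\bigl(\alpha\ln Z(x,\mathbf{g})\bigr)$ is the exponential of a convex log-sum-exp, hence convex, and integrating against $\mu\ge 0$ preserves convexity. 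So $\mathcal{J}$ is strongly convex and differentiable, and it has a unique minimizer $\mathbf{g}^\star$ at which $\nabla\mathcal{J}(\mathbf{g}^\star)=0$. Differentiability under the integral was already used in deriving \eqref{eq:grad_unified}, so we take it as given.

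Next we write out the stationarity condition componentwise using \eqref{eq:grad_unified}:
\[
0=\nabla_k\mathcal{J}(\mathbf{g}^\star)=[\nabla\mathcal{J}_{\mathrm{trans}}(\mathbf{g}^\star)]_k+\tfrac{\beta_k}{\rho_2}g_k^\star-\beta_k .
\]
Rearranging gives $[\nabla\mathcal{J}_{\mathrm{trans}}(\mathbf{g}^\star)]_k=\beta_k(1-g_k^\star/\rho_2)$, which is the first claim.

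For the box constraint, we observe that $B_j(x,\mathbf{g})>0$ for all $x$ and $j$. Hence $Z(x,\mathbf{g})>0$, $w_k(x,\mathbf{g})\in(0,1)$, and $Z^\alpha>0$. Since $\mu$ is a nonnegative measure, $[\nabla\mathcal{J}_{\mathrm{trans}}(\mathbf{g})]_k=\int Z^\alpha w_k\,d\mu\ge 0$. Combining this with the identity above gives $\beta_k(1-g_k^\star/\rho_2)\ge 0$, and because $\beta_k>0$ we get $g_k^\star\le\rho_2$. If $\mu\neq 0$ the inequality is in fact strict.

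There is no substantial obstacle here. The only point that needs care is existence of the minimizer, which comes from strong convexity supplied by the $\chi^2$ target term, together with finiteness of $\int Z^\alpha\,d\mu$. The latter holds because $c\ge 0$ gives $Z(x,\mathbf{g})\le\sum_j\beta_j e^{g_j/\varepsilon}$, a bound uniform in $x$, and $\mu$ is finite.
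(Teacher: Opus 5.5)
Your proof is correct and matches the paper's own essentially one-line argument: rearrange $\nabla_k\mathcal{J}(\mathbf{g}^\star)=0$ using the gradient formula \eqref{eq:grad_unified}, then use $[\nabla\mathcal{J}_{\mathrm{trans}}(\mathbf{g}^\star)]_k=\int Z^\alpha w_k\,d\mu\ge 0$ together with $\beta_k>0$. Your additional justification that $\mathbf{g}^\star$ exists and is unique (convexity of $Z^\alpha=\exp(\alpha\ln Z)$ plus the $(\beta_{\min}/\rho_2)$-strong convexity of the $\chi^2$ term) fills in a step the paper takes for granted.
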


Building on this, we define the feasible region $\mathcal{K}$, which will serve as the focal point for our subsequent analysis and in our algorithms, where we will use $\delta = 0.1$ by default. 
\begin{equation*}
\mathcal{K}_\delta := \{ \mathbf{g} \in \mathbb{R}^n \mid g_k \le \rho_2 + \delta, \forall k \},
\end{equation*}
where $\delta > 0$ is a small margin ensuring $\mathbf{g}^\star \in \mathrm{int}(\mathcal{K})$. By focusing on $\mathcal{K}$, we can transition from point-wise optimality to a more global understanding of the objective's geometry. In particular, in the following, we leverage this set to derive dimension-independent bounds on the curvature. 

\subsection{Global and Local Curvature}\label{subsec:glob_and_loc_curv}

\begin{lemma}[Uniform Gradient Bound and Smoothness]
\label{lem:global_grad_bound} (Proof in Appendix \ref{app::proof:lem:global_grad_bound}.) 
  On $\mathcal{K}$, the $L_1$-norm of the transport gradient is uniformly bounded:  $\| \nabla \mathcal{J}_{\mathrm{trans}}(\mathbf{g}) \|_1 \le C_{\mathrm{bound}}$, where:
\begin{equation}
    C_{\mathrm{bound}}^{\KL} :=
    \mu(\mathcal{X}) \|\nu\|_1^\alpha \exp\left( \frac{\rho_2 + \delta}{\rho_1 + \varepsilon} \right) 
\end{equation}
Consequently, the Hessian is bounded on $\mathcal{K}$, and $\mathcal{J}$ is $L$-smooth with $L = \mathcal{O}(1/\varepsilon)$.
\end{lemma}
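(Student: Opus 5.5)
Since every summand $Z^\alpha w_k$ is nonnegative, the $L_1$-norm of the transport gradient equals the sum of its components. Because $\sum_k w_k(x,\mathbf{g}) = 1$, this gives
\[
\|\nabla \mathcal{J}_{\mathrm{trans}}(\mathbf{g})\|_1 = \int_{\mathcal{X}} Z(x,\mathbf{g})^\alpha \, d\mu(x).
\]
So the first task is to bound $Z(x,\mathbf{g})^\alpha$ uniformly in $x$ and in $\mathbf{g}\in\mathcal{K}$. On $\mathcal{K}$ we have $g_j \le \rho_2+\delta$, and since $c\ge 0$ we have $e^{-c(x,y_j)/\varepsilon}\le 1$. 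Hence
\[
Z(x,\mathbf{g}) \le \sum_j \beta_j e^{(\rho_2+\delta)/\varepsilon} = \|\nu\|_1 e^{(\rho_2+\delta)/\varepsilon}.
\]
Raising this to the power $\alpha = \varepsilon/(\varepsilon+\rho_1)$ gives $Z^\alpha \le \|\nu\|_1^\alpha \exp\big((\rho_2+\delta)/(\rho_1+\varepsilon)\big)$. Integrating against $\mu$ then yields exactly $C_{\mathrm{bound}}^{\KL}$. The key point is that the exponent $1/\varepsilon$ is cancelled by $\alpha$, so the bound does not blow up as $\varepsilon\to 0$.

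For the smoothness consequence, I would apply Theorem~\ref{thm:smoothness_unified} together with $\|\cdot\|_\infty \le \|\cdot\|_1$. This gives
\[
\|\nabla^2\mathcal{J}(\mathbf{g})\|_{\mathrm{op}} \le \frac{C_{\mathrm{bound}}}{\varepsilon} + \frac{\beta_{\max}}{\rho_2}
\]
for every $\mathbf{g}\in\mathcal{K}$. Since $\mathcal{K}$ is convex (it is a box, unbounded below), any segment between two points of $\mathcal{K}$ stays in $\mathcal{K}$. The mean value theorem for $\nabla\mathcal{J}$ along such segments then gives $L$-smoothness on $\mathcal{K}$ with $L = C_{\mathrm{bound}}/\varepsilon + B/(n\rho_2) = \mathcal{O}(1/\varepsilon)$. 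Here $C_{\mathrm{bound}}$ stays bounded as $\varepsilon\to0$ because $\alpha\to 0$ and the exponential factor tends to $e^{(\rho_2+\delta)/\rho_1}$.

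There is no real obstacle in this argument. The only delicate points are checking that the summation identity $\sum_k w_k = 1$ legitimately removes the weights, which needs nonnegativity and Tonelli, and making sure the constant does not secretly depend on $n$. The latter holds because $\|\nu\|_1 \le B$ under Setting~\ref{setting::semi_discrete}.
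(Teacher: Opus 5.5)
Your proposal is correct and follows the paper's argument. The weights sum to one, so the $L_1$ norm reduces to $\int_{\mathcal{X}} Z^\alpha \, d\mu$. Then $c \ge 0$ and $g_j \le \rho_2 + \delta$ give $Z \le \|\nu\|_1 e^{(\rho_2+\delta)/\varepsilon}$, and raising this to the power $\alpha$ cancels the $1/\varepsilon$ in the exponent. Your derivation of the smoothness consequence via Theorem~\ref{thm:smoothness_unified}, $\|\cdot\|_\infty \le \|\cdot\|_1$, and convexity of $\mathcal{K}$ is valid and spells out a step the paper leaves implicit.
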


This result demonstrates that the objective remains "flat" enough for stable optimization even as the resolution $n \to \infty$, when we are in $\K$.  Notably, the bound is independent of $n$ since $\|\nu\|_1 \leq B$. However, it grows exponentially with the margin $\delta$, illustrating that $\mathcal{J}$ is not globally smooth; its curvature is only controlled near the optimizer. Finally, we establish the conditioning of the problem, where the condition number for an $L$-smooth and $\gamma$-strongly convex objective is defined as
\[
    \kappa := \frac{L}{\gamma}\ . 
\]

\begin{corollary}[Local Conditioning]
\label{cor:local_conditioning}
The objective $\mathcal{J}$ is $\frac{\beta_{\min}}{\rho_2}$-strongly convex on $\mathbb{R}^n $. At the optimizer $\mathbf{g}^\star$, the local condition number $\kappa$ satisfies:
\begin{equation}
    \kappa(\nabla^2 \mathcal{J}(\mathbf{g}^\star)) \le \frac{\beta_{\max}}{\beta_{\min}} \left( 1 + \frac{\max_k \{\rho_2 - g_k^\star\}}{\varepsilon} \right).
\end{equation}
\end{corollary}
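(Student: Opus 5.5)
The plan is to prove the two claims separately. The local claim then comes from combining Theorem~\ref{thm:smoothness_unified} with the first-order identity of Proposition~\ref{prop:optimality_constraints}.

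\emph{Strong convexity.} I will split $\mathcal{J} = \mathcal{J}_{\trans} + \sum_j \beta_j\bigl(\tfrac{g_j^2}{2\rho_2} - g_j\bigr)$. The quadratic part has Hessian $\mathrm{diag}(\beta_k/\rho_2) \succeq \tfrac{\beta_{\min}}{\rho_2} I$. It therefore suffices to show that $\mathcal{J}_{\trans}$ is convex on $\RR^n$.

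For fixed $x$, write $\mathbf{g}\mapsto \ln Z(x,\mathbf{g})$; this is a log-sum-exp of affine functions of $\mathbf{g}/\varepsilon$ and hence convex. Since $\alpha\in(0,1]$, we have $Z^\alpha = \exp(\alpha \ln Z)$, which is the composition of a convex nondecreasing function with a convex function and is therefore convex. Integrating against $\mu\ge 0$ and multiplying by $\rho_1+\varepsilon>0$ preserves convexity. This gives $\nabla^2\mathcal{J}(\mathbf{g}) \succeq \tfrac{\beta_{\min}}{\rho_2}I$ everywhere. Differentiating under the integral is justified by the same regularity used in Proposition~\ref{prop:unified_semi_dual}.

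\emph{Local smoothness.} At $\mathbf{g}^\star$, Theorem~\ref{thm:smoothness_unified} gives
\[
\|\nabla^2\mathcal{J}(\mathbf{g}^\star)\|_{\mathrm{op}} \le \tfrac{1}{\varepsilon}\|\nabla\mathcal{J}_{\trans}(\mathbf{g}^\star)\|_\infty + \tfrac{\beta_{\max}}{\rho_2}.
\]
By Proposition~\ref{prop:optimality_constraints}, $[\nabla\mathcal{J}_{\trans}(\mathbf{g}^\star)]_k = \beta_k(1-g_k^\star/\rho_2) = \tfrac{\beta_k}{\rho_2}(\rho_2 - g_k^\star)$. This quantity is nonnegative, so its sup-norm is at most $\tfrac{\beta_{\max}}{\rho_2}\max_k\{\rho_2-g_k^\star\}$. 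Hence
\[
\|\nabla^2\mathcal{J}(\mathbf{g}^\star)\|_{\mathrm{op}} \le \tfrac{\beta_{\max}}{\rho_2}\Bigl(1 + \tfrac{\max_k\{\rho_2-g_k^\star\}}{\varepsilon}\Bigr).
\]

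\emph{Conclusion.} The local condition number of the Hessian at $\mathbf{g}^\star$ is $\lambda_{\max}/\lambda_{\min}$. Dividing the upper bound by the lower bound $\beta_{\min}/\rho_2$ cancels the factor $\rho_2$ and yields the stated bound.

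I expect the only delicate step to be the convexity of $Z^\alpha$, which relies on $\alpha\le 1$ to write it as the exponential of a convex function. Everything else is direct substitution of earlier results. For the $\chi^2$-source variant, I would replace that step by the corresponding convexity of the transport term, which follows because it arises as a supremum over $f$ of functions affine in $\mathbf{g}$ composed with convex exponentials.
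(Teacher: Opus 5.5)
Your proof is correct and follows the argument the paper intends (it gives no separate proof). Strong convexity comes from $\mathrm{diag}(\boldsymbol\beta)/\rho_2$ plus convexity of $\mathcal{J}_{\trans}$, and the local bound is Theorem~\ref{thm:smoothness_unified} at $\mathbf g^\star$ with $[\nabla\mathcal J_{\trans}(\mathbf g^\star)]_k=\beta_k(1-g_k^\star/\rho_2)\ge 0$ from Proposition~\ref{prop:optimality_constraints}. The only difference is how convexity of the transport term is obtained: the paper's explicit Hessian $\int\frac{\sigma}{\varepsilon}\bigl(\mathrm{diag}(\mathbf w)-c(x)\mathbf w\mathbf w^\top\bigr)\,d\mu$ with $c(x)\in[0,1)$ gives it directly, while your log-sum-exp composition argument works just as well and needs only $\alpha>0$, not $\alpha\le 1$.

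One correction to your $\chi^2$-source aside. That transport term is an \emph{infimum} over $f$ of $\varepsilon e^{f/\varepsilon}Z(x,\mathbf g)+\rho_1\varphi^c(-f/\rho_1)$, not a supremum. Its convexity therefore follows from joint convexity in $(f,\mathbf g)$, since $e^{f/\varepsilon}Z=\sum_j\beta_j e^{(f+g_j-c(x,y_j))/\varepsilon}$, together with partial minimization. Alternatively, it follows directly from the same Hessian formula with $c(x)=W(U)/(1+W(U))$.
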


This result highlights a crucial disconnect between the global worst-case analysis derived in Lemma \ref{lem:global_grad_bound} and the local reality of the problem. Even on $\mathcal{K}$, a global bound on $L$ is of the form $\Oc(1/\varepsilon)$, leading to a condition number of $\Oc(N\rho_2/\varepsilon)$. In contrast, Corollary~\ref{cor:local_conditioning} shows that locally, the conditioning depends only on the regularization ratio $\rho_2/\varepsilon$ and on the mass balance ratio $\beta_{\max}/\beta_{\min}$, which we assume is $\approx 1$. Crucially, this local condition number is independent of $n$. This observation motivates  the use of first-order methods that can adapt to the local curvature and converge at a rate independent of the problem size $n$.

The key to unlocking these local acceleration guarantees lies in our proof of generalized self-concordance for the semi-dual. While this property has been successfully applied to logistic regression \cite{bach2010self, bach2014adaptivity} and discrete optimal transport \cite{sun2019generalized}, establishing it in our context allows us to strictly control the change of the Hessian locally. We utilize this property in both our semi-discrete and discrete settings to derive rates that depend on the favorable local geometry rather than global worst-case bounds.

\begin{proposition}[Generalized self-concordance]\label{prop::self_concordance} (Proof in Appendix \ref{app::proof_self_conc}.)
    The semi-dual $\J$ is generalized self-concordant. That is, for $M = \frac{2 + 3\alpha}{\varepsilon}$ for $\KL$ source, and $M = \frac{6}{\varepsilon}$ for $\chi^2$ , we have for any $\bfg \in \mathbb{R}^n$ and any direction $\bfh \in \mathbb{R}^n$:
    \[
        \big| \nabla^3 \J(\bfg)[\bfh, \bfh, \bfh] \big| \le M \|\bfh\|_\infty \, \langle \bfh, \nabla^2 \J(\bfg) \bfh \rangle.
    \]
\end{proposition}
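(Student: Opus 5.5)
The plan is to reduce the claim to a pointwise-in-$x$ inequality for the transport integrand, and then integrate against $\mu$.

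\emph{Reduction to the transport part.} The quadratic term $\sum_j \beta_j(\tfrac{g_j^2}{2\rho_2}-g_j)$ has vanishing third derivative. Its Hessian is positive semidefinite. Hence
\[
\nabla^3\J=\nabla^3\J_{\trans}, \qquad \langle \bfh,\nabla^2\J_{\trans}(\bfg)\bfh\rangle\le\langle \bfh,\nabla^2\J(\bfg)\bfh\rangle,
\]
and it suffices to prove $|\nabla^3\J_{\trans}(\bfg)[\bfh,\bfh,\bfh]|\le M\|\bfh\|_\infty\langle \bfh,\nabla^2\J_{\trans}(\bfg)\bfh\rangle$. I will prove this pointwise for the integrand $F_x(\bfg):=(\rho_1+\varepsilon)Z(x,\bfg)^\alpha$. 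The pointwise bound then transfers to $\J_{\trans}$ because $\nabla^2 F_x\succeq 0$ (so the integrated Hessian dominates) and because derivatives can be exchanged with the integral. The exchange is justified by dominated convergence: every directional derivative of $F_x$ is bounded by $F_x$ times a polynomial in $\|\bfh\|_\infty/\varepsilon$, and $\int Z^\alpha\,d\mu<\infty$ locally in $\bfg$.

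\emph{Pointwise computation (KL source).} Fix $x$ and $\bfh$, and write $L(t)=\ln Z(x,\bfg+t\bfh)$. The map $t\mapsto L$ is a log-sum-exp, so its derivatives are cumulants of $h$ under the Gibbs weights $w=w(x,\bfg)$:
\[
L'=\tfrac1\varepsilon\EE_w[h],\qquad L''=\tfrac1{\varepsilon^2}\mathrm{Var}_w(h),\qquad L'''=\tfrac1{\varepsilon^3}\EE_w[(h-\EE_w h)^3].
\]
Since $F=(\rho_1+\varepsilon)e^{\alpha L}$, differentiating gives
\[
F''=F\,(\alpha L''+\alpha^2L'^2),\qquad F'''=F\,(\alpha L'''+3\alpha^2L'L''+\alpha^3L'^3).
\]
I then bound each term of $F'''$ by $\|\bfh\|_\infty/\varepsilon$ times a term of $F''$:
\begin{itemize}
\item $|h-\EE_wh|\le 2\|\bfh\|_\infty$ gives $|L'''|\le\tfrac{2\|\bfh\|_\infty}{\varepsilon}L''$;
\item $|L'|\le\|\bfh\|_\infty/\varepsilon$ gives $3\alpha^2|L'|L''\le\tfrac{3\alpha\|\bfh\|_\infty}{\varepsilon}\,\alpha L''$;
\item likewise $\alpha^3|L'|^3\le\tfrac{\alpha\|\bfh\|_\infty}{\varepsilon}\,\alpha^2L'^2$.
\end{itemize}
Summing, and using $\alpha\le1$, yields
\[
|F'''|\le\frac{2+3\alpha}{\varepsilon}\|\bfh\|_\infty\,F\,(\alpha L''+\alpha^2L'^2)=\frac{2+3\alpha}{\varepsilon}\|\bfh\|_\infty F''.
\]
This is exactly the constant $M$ claimed for the KL source. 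Integrating over $\mu$ concludes.

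\emph{$\chi^2$ source, and the main obstacle.} For the $\chi^2$ source, the eliminated $f$ involves the Lambert $W$ function. The integrand becomes $\Psi(L)$ for a scalar function $\Psi$ of the same log-partition $L$. The same chain rule gives
\[
F''=\Psi''L'^2+\Psi'L'',\qquad F'''=\Psi'''L'^3+3\Psi''L'L''+\Psi'L'''.
\]
The cumulant bounds above are unchanged. The new work is to show the scalar inequalities $\Psi'\ge0$, $\Psi''\ge0$ and $|\Psi'''|\le c\,\Psi''$ with $c\le 1$, using the identity $W'=W/(x(1+W))$. Combining these would produce a constant of order $(2+3+1)/\varepsilon=6/\varepsilon$. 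I expect these Lambert-$W$ scalar inequalities to be the main obstacle. My first attempt would be to write $\Psi$ and its derivatives as rational functions of $u=W(\cdot)\ge0$, and verify the ratio bound directly as a polynomial inequality in $u$.
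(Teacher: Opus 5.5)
Your KL-source argument is correct and is the paper's proof in different coordinates. The paper differentiates $Z^\alpha$ directly and rewrites the result in central moments. It obtains $\partial_{\bfh}^2\tau\propto\sigma^2+\alpha m^2$ and $\partial_{\bfh}^3\tau\propto\kappa_3+3\alpha m\sigma^2+\alpha^2 m^3$, with $m=\EE_w h$, $\sigma^2=\mathrm{Var}_w(h)$ and $\kappa_3$ the third central moment. Up to powers of $\varepsilon$, these are exactly your $\alpha L''+\alpha^2L'^2$ and $\alpha L'''+3\alpha^2L'L''+\alpha^3L'^3$, and the term-by-term bounds are the same. Working with $\ln Z$ simply skips the moment-to-cumulant rewrite. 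Your explicit reduction from $\J$ to $\J_{\trans}$ is what the paper uses implicitly.

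The $\chi^2$ case, however, is left unfinished, and the scalar conditions you list would not suffice even once verified. In $F'''=\Psi'''L'^3+3\Psi''L'L''+\Psi'L'''$ the middle term carries $\Psi''L''$. The only $L''$-term of $F''=\Psi''L'^2+\Psi'L''$ carries $\Psi'$. Nothing in $\Psi',\Psi''\ge 0$ and $|\Psi'''|\le\Psi''$ compares these two. Indeed, choosing $m^2\approx\sigma^2\Psi'/\Psi''$ makes the ratio of the middle term to $F''$ of order $\sqrt{\Psi''/\Psi'}\,\sigma/\varepsilon$.

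Your tally $2+3+1$ therefore tacitly assumes $\Psi''\le\Psi'$, and this must be added to your list. It does hold, and in your log-variable the check takes two lines. Set $a=\frac{\rho_1}{\varepsilon}e^{\rho_1/\varepsilon}$ and $\omega=W(ae^{L})>0$. Then $\ln\omega+\omega=\ln a+L$, so $\omega'=\omega/(1+\omega)$ and
\[
\Psi'=\tfrac{\varepsilon^2}{\rho_1}\,\omega,\qquad
\Psi''=\tfrac{\varepsilon^2}{\rho_1}\,\tfrac{\omega}{1+\omega},\qquad
\Psi'''=\tfrac{\varepsilon^2}{\rho_1}\,\tfrac{\omega}{(1+\omega)^3}.
\]
Hence $0\le\Psi'''\le\Psi''\le\Psi'$. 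Your term-by-term bounds then give $|F'''|\le\frac{\|\bfh\|_\infty}{\varepsilon}\bigl(5\Psi'L''+\Psi''L'^2\bigr)\le\frac{5}{\varepsilon}\|\bfh\|_\infty F''$, which is within the claimed $6/\varepsilon$.

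Substituting $L'=m/\varepsilon$, $L''=\sigma^2/\varepsilon^2$ and $L'''=\kappa_3/\varepsilon^3$ reproduces the paper's identities $\partial_{\bfh}^2\tau_{\chi^2}=\frac{\omega}{\rho_1}\bigl(\sigma^2+\frac{m^2}{1+\omega}\bigr)$ and $\partial_{\bfh}^3\tau_{\chi^2}=\frac{\omega}{\rho_1\varepsilon}\bigl[\kappa_3+\frac{3m\sigma^2}{1+\omega}+\frac{m^3}{(1+\omega)^3}\bigr]$. The paper derives these by differentiating $\psi(z)$ in $z$, then bounds them the same way, loosening $5$ to $6$.
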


\paragraph{The Necessity of the Target $\chi^2$ Divergence.}
\label{subsec::kl-kl}
As highlighted in Remark~\ref{prop:unified_semi_dual}, the $\chi^2$ target penalty offers a decisive geometric advantage over the standard KL-KL formulation. In the KL-KL setting, the diagonal terms of the dual Hessian scale with $e^{-g/\rho_2}$. Consequently, the curvature becomes highly anisotropic and vanishes exponentially as $g \to \infty$ (a phenomenon linked to mass destruction). This lack of uniform strong convexity severely complicates the analysis of accelerated algorithms. In stark contrast, the $\chi^2$ penalty ensures a constant curvature of $1/\rho_2$ and guarantees global strong convexity. While the KL-KL geometry remains manageable in the batch discrete setting (discussed in Section~\ref{subsec:full_discrete}), the $\chi^2$ formulation provides the structural stability required for our results.

\section{Adaptive Gradient Descent on the Semi-Dual}
\label{sec:algo}

\subsection{Large-Scale and Semi-Discrete Settings}
\label{subsec:sgd_semidiscrete}

We first address the setting where the source measure $\mu$ is continuous, or discrete with a cardinality large enough to make full-batch processing impossible. In this regime, Stochastic Gradient Descent (SGD) is the natural algorithmic choice. We demonstrate that a simple Projected Averaged SGD (PASGD) scheme achieves efficient convergence, escapes the worst-case analysis, and leverages the local smoothness of the semi-dual. 

\paragraph{Unbiased Gradient Estimator.}
Assume $\mu(\mathcal{X}) < \infty$ and let $X_1, \dots, X_{m_b} \overset{\text{i.i.d.}}{\sim} \mu / \mu(\mathcal{X})$ be a batch of samples drawn from the normalized source measure.
Using the gradient formulation \eqref{eq:grad_unified}, we define the stochastic gradient estimator $\widehat{\nabla} \mathcal{J}(\mathbf{g})$ component-wise for $k \in \{1, \dots, n\}$:
\begin{equation}
\label{eq:stoch_grad_estimator}
\widehat{\nabla}_k \mathcal{J}(\mathbf{g})
:=
\frac{\mu(\mathcal{X})}{m_b} \sum_{i=1}^{m_b} Z(X_i, \mathbf{g})^\alpha \, w_k(X_i, \mathbf{g}) + \frac{\beta_k}{\rho_2}g_k - \beta_k.
\end{equation}
By the linearity of expectation and the identity $\mathbb{E}_{X \sim \mu/\mu(\mathcal{X})}[h(X)] = \frac{1}{\mu(\mathcal{X})} \int h \, d\mu$, it is immediate that this estimator is unbiased: $\mathbb{E}[\widehat{\nabla} \mathcal{J}(\mathbf{g})] = \nabla \mathcal{J}(\mathbf{g})$.

\paragraph{Complexity.}
Computing this estimator for the full vector $\mathbf{g} \in \mathbb{R}^n$ requires only $\mathcal{O}(m_b \cdot n)$ operations. This linear complexity in $n$ makes the approach scalable to large target supports.

\paragraph{The Online Regime.}
A significant advantage of this stochastic formulation is its direct applicability to the \textit{online} setting \cite{hazan2016introduction}. Unlike batch methods that require repeated passes over a fixed dataset, our approach naturally handles streaming data where samples $X_t$ arrive sequentially. In this regime, the algorithm effectively minimizes the population risk (the integral against $\mu$) directly, rather than the empirical risk. This is particularly valuable for semi-discrete tasks, as in generative modeling \citep{li2023dpm, choi2023generative} where storing the full history of samples is memory-prohibitive. The constant memory footprint, storing only the potential $\mathbf{g} \in \mathbb{R}^n$, remains independent of the number of samples processed.

\paragraph{Adaptivity and Convergence.}
We propose to solve the semi-dual problem using Projected Averaged SGD (PASGD). While standard SGD can suffer from oscillation in ill-conditioned settings, averaging the iterates (Polyak-Ruppert averaging) is known to statistically adapt to the local curvature, achieving optimal asymptotic rates.

We define the update rule at step $t$ with step size $\eta_t$ as:
\begin{equation}
    \mathbf{g}_{t+1} = \Pi_{\mathcal{K}}\left( \mathbf{g}_t - \eta_t \widehat{\nabla} \mathcal{J}(\mathbf{g}_t) \right), \quad \bar{\mathbf{g}}_T = \frac{1}{T} \sum_{t=1}^T \mathbf{g}_t.
\end{equation}
Here, $\Pi_{\mathcal{K}}$ denotes the projection onto the feasible set $\mathcal{K} = \{ \mathbf{g} \mid g_k \le \rho_2 + \delta \}$, which ensures the iterates remain in the region where the gradient variance and Hessian are well-behaved (as established in Section \ref{sec:semi-dual}). 

Before stating the main convergence result, we give a simple lemma, important for the analysis of SGD schemes. 

\begin{proposition}[Variance Bound of Mini-Batch Gradient]\label{lemma::variance} (Proof in Appendix \ref{app::proof_lemma::variance}.) 
Let $\widehat{\nabla} \mathcal{J}(\mathbf{g})$ be the mini-batch gradient estimator computed with batch size $m_b\geq 1$, as defined in Eq. \eqref{eq:stoch_grad_estimator}.
For any $\mathbf{g} \in \mathcal{K}$, using the uniform bound $C_{\mathrm{bound}}$ from Lemma \ref{lem:global_grad_bound}, the variance is bounded by:
\begin{equation}
    \mathbb{E}\left[ \| \widehat{\nabla} \mathcal{J}(\mathbf{g}) - \nabla \mathcal{J}(\mathbf{g}) \|_2^2 \right] 
    \le 
    \frac{4 C_{\mathrm{bound}}^2}{m_b} \ . 
\end{equation}
\end{proposition}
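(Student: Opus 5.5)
The plan is to reduce the mini-batch variance to that of a single sample and then bound one sample's transport gradient pointwise. The deterministic part $\frac{\beta_k}{\rho_2}g_k-\beta_k$ cancels in the difference, so
\[
\widehat{\nabla}\mathcal{J}(\mathbf{g})-\nabla\mathcal{J}(\mathbf{g}) = \frac{1}{m_b}\sum_{i=1}^{m_b}\big(V_i - \EE V_i\big),
\qquad V_i := \mu(\mathcal{X})\, Z(X_i,\mathbf{g})^\alpha\, \mathbf{w}(X_i,\mathbf{g})\in\RR^n_+ .
\]
Here $\mathbf{w}=(w_1,\dots,w_n)$, and $\EE V_i=\nabla\mathcal{J}_{\trans}(\mathbf{g})$. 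Since the $V_i$ are i.i.d., the cross terms vanish and
\[
\EE\|\widehat{\nabla}\mathcal{J}-\nabla\mathcal{J}\|_2^2=\frac{1}{m_b}\,\EE\|V_1-\EE V_1\|_2^2 .
\]
It then suffices to show $\EE\|V_1-\EE V_1\|_2^2\le 4C_{\mathrm{bound}}^2$.

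For this I use the crude bound $\|V_1-\EE V_1\|_2\le \|V_1\|_2+\|\EE V_1\|_2\le \|V_1\|_1+\|\EE V_1\|_1$. By Lemma~\ref{lem:global_grad_bound}, $\|\EE V_1\|_1=\|\nabla\mathcal{J}_{\trans}(\mathbf{g})\|_1\le C_{\mathrm{bound}}$ on $\mathcal{K}$. For the random term, $\sum_k w_k=1$ gives $\|V_1\|_1=\mu(\mathcal{X})Z(X_1,\mathbf{g})^\alpha$. So I need the pointwise (almost sure) bound $\mu(\mathcal{X})Z(x,\mathbf{g})^\alpha\le C_{\mathrm{bound}}$ for every $x$ and every $\mathbf{g}\in\mathcal{K}$. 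Once that holds, $\|V_1-\EE V_1\|_2\le 2C_{\mathrm{bound}}$, and squaring gives the factor $4$.

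The main step is therefore the uniform-in-$x$ bound on $Z^\alpha$, which is presumably exactly how Lemma~\ref{lem:global_grad_bound} is proved. The constraint $g_j\le\rho_2+\delta$ and the nonnegativity $c\ge0$ give
\[
Z(x,\mathbf{g})\le \sum_j\beta_j e^{(\rho_2+\delta)/\varepsilon}=\|\nu\|_1e^{(\rho_2+\delta)/\varepsilon}.
\]
Raising to the power $\alpha=\varepsilon/(\varepsilon+\rho_1)$ yields
\[
Z^\alpha\le \|\nu\|_1^\alpha\exp\!\left(\tfrac{\rho_2+\delta}{\rho_1+\varepsilon}\right).
\]
Multiplying by $\mu(\mathcal{X})$ gives precisely $C^{\KL}_{\mathrm{bound}}$. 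For the $\chi^2$-source case, the same argument applies once the analogous pointwise bound on the per-sample transport term is in place, which should also underlie that case of the lemma.

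One could sharpen the constant to $C_{\mathrm{bound}}^2$ by using $\EE\|V-\EE V\|^2\le\EE\|V\|^2$, but the stated factor $4$ follows from the triangle inequality above.
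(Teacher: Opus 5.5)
Your proposal is correct and follows the paper's route. You reduce to a single sample via independence, so the variance scales as $1/m_b$. You then bound the single-sample deviation almost surely by $2C_{\mathrm{bound}}$ in $\ell_1$, using the triangle inequality and $\|\cdot\|_2\le\|\cdot\|_1$. You are also slightly more careful than the paper: it applies Lemma~\ref{lem:global_grad_bound} ``for any sample realization'' even though that lemma only bounds the integrated gradient, and your explicit pointwise bound $\mu(\mathcal{X})Z(x,\mathbf{g})^\alpha\le C^{\KL}_{\mathrm{bound}}$ (which the lemma's proof does establish) fills that step.
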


\begin{theorem}[Convergence of PASGD]
\label{thm:pasgd_convergence}(Proof in Appendix \ref{app:proof_pasgd}.) 
Let the step sizes be chosen as $\eta_t = C t^{-\gamma}$ with $\gamma \in (1/2, 1)$. Under Setting \ref{setting::semi_discrete} and the projection onto $\mathcal{K}$, the averaged iterate $\bar{\mathbf{g}}_T$ converges to the optimum $\mathbf{g}^\star$ in objective value with an expected error of:
\[
\mathbb{E}\left[ \mathcal{J}(\bar{\mathbf{g}}_T) - \mathcal{J}(\mathbf{g}^\star) \right] = \mathcal{O}\left( \frac{n\rho_2^2}{\varepsilon T} \right).
\]
\end{theorem}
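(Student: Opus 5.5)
We would follow the Bach--Moulines/Bach (2014) style analysis of averaged SGD for generalized self-concordant objectives, adapted to the projection onto $\mathcal{K}$. The target rate $\mathcal{O}(n\rho_2^2/(\varepsilon T))$ is the Polyak--Ruppert rate $\mathrm{tr}(H^{-1}\Sigma)/T$, where $H=\nabla^2\mathcal{J}(\mathbf{g}^\star)$ and $\Sigma$ is the covariance of the stochastic gradient at $\mathbf{g}^\star$. The plan therefore has three parts: (i) the iterates reach a neighborhood of $\mathbf{g}^\star$ fast enough; (ii) in that neighborhood the problem is close to quadratic, so the averaged iterate has the asymptotic-type error; (iii) the leading constant $\mathrm{tr}(H^{-1}\Sigma)$ and the local conditioning are bounded using Corollary~\ref{cor:local_conditioning} and Proposition~\ref{lemma::variance}.

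\textbf{Step 1: global bounds on $\mathcal{K}$.} Since $\mathcal{K}$ is convex and contains $\mathbf{g}^\star$ in its interior, the projection is nonexpansive. By Proposition~\ref{lemma::variance}, the noise variance is uniformly bounded by $4C_{\mathrm{bound}}^2/m_b$. $\mathcal{J}$ is $\beta_{\min}/\rho_2$-strongly convex, and by Lemma~\ref{lem:global_grad_bound} and Theorem~\ref{thm:smoothness_unified} it is $L$-smooth on $\mathcal{K}$ with $L=\mathcal{O}(1/\varepsilon)$.

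\textbf{Step 2: contraction in distance.} The standard recursion for $\mathbb{E}\|\mathbf{g}_t-\mathbf{g}^\star\|^2$ with $\eta_t=Ct^{-\gamma}$ and $\gamma\in(1/2,1)$ gives $\mathbb{E}\|\mathbf{g}_t-\mathbf{g}^\star\|^2=\mathcal{O}(\eta_t\sigma^2/\mu_{sc})$. Here the strong-convexity constant is $\mu_{sc}=\beta_{\min}/\rho_2\approx b/(n\rho_2)$. This bound is not sharp enough by itself, since it would give $n$-dependent conditioning, but it is enough to make the iterates concentrate near $\mathbf{g}^\star$.

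\textbf{Step 3: averaging.} Write $\nabla\mathcal{J}(\mathbf{g}_t)=H(\mathbf{g}_t-\mathbf{g}^\star)+R_t$. Summing the update gives
\[
H(\bar{\mathbf{g}}_T-\mathbf{g}^\star)=\frac1T\sum_t\Big[\frac{\mathbf{g}_t-\mathbf{g}_{t+1}}{\eta_t}-\xi_t-R_t\Big]+\text{projection terms}.
\]
Here $\xi_t$ is the martingale noise. The martingale part contributes $\mathrm{tr}(H^{-1}\Sigma)/T$ to the function-value error $\tfrac12\|H^{1/2}(\bar{\mathbf{g}}_T-\mathbf{g}^\star)\|^2$. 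The telescoping terms are of lower order, using $\gamma<1$ together with Step 2. The remainder $R_t$ is controlled by generalized self-concordance (Proposition~\ref{prop::self_concordance}), which gives $\|R_t\|\lesssim M\|\mathbf{g}_t-\mathbf{g}^\star\|_\infty\,\|\mathbf{g}_t-\mathbf{g}^\star\|_{H}$ (up to an exponential factor $e^{M\|\cdot\|_\infty}$), and is therefore of higher order. Self-concordance also lets us compare $\mathcal{J}(\bar{\mathbf{g}}_T)-\mathcal{J}(\mathbf{g}^\star)$ with the quadratic form in $H$, with constants $e^{M\|\bar{\mathbf{g}}_T-\mathbf{g}^\star\|_\infty}$. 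Because iterates stay in $\mathcal{K}$ and concentrate, the projection terms vanish with high probability; on the complementary event they are bounded using boundedness on $\mathcal{K}$ together with Markov's inequality.

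\textbf{Step 4: the constant.} Since $H\succeq(\beta_{\min}/\rho_2)I$, we have $\mathrm{tr}(H^{-1}\Sigma)\le\frac{\rho_2}{\beta_{\min}}\mathrm{tr}\Sigma\le\frac{n\rho_2}{b}\cdot\frac{4C_{\mathrm{bound}}^2}{m_b}$. The constant $C_{\mathrm{bound}}$ is independent of $n$. This gives $\mathcal{O}(n\rho_2/T)$, and the $\rho_2/\varepsilon$ factor is the local condition number from Corollary~\ref{cor:local_conditioning}, which enters through the constants of the self-concordant/higher-order terms. This yields the $n\rho_2^2/(\varepsilon T)$ form.

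\textbf{Main obstacle.} The hard part is Step 3: controlling the nonlinear remainder and the projection without reintroducing the global condition number $n\rho_2/\varepsilon$ into the leading term. To handle it, we would first use the $\ell_\infty$-based self-concordance to bound $\mathbb{E}\|\mathbf{g}_t-\mathbf{g}^\star\|_\infty^2$ through the $\ell_2$ bound of Step 2. We would then accept that the global conditioning affects only the $o(1/T)$ terms, following Bach's adaptivity argument for logistic regression.
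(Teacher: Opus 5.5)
Your architecture is sound and differs usefully from the paper's, but the moment control in Step~2 is not enough to carry Step~3.

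\textbf{Comparison with the paper.} The paper black-boxes the Polyak--Ruppert analysis. It verifies the Gadat--Panloup moment conditions from the almost-sure noise bound $\|\xi_{t+1}\|_2\le 2C_{\mathrm{bound}}$ and takes $\mathbb{E}\|\bar{\mathbf g}_T-\mathbf g^\star\|_2^2\le \mathrm{Tr}(H^{-1}\Sigma H^{-1})/T+o(1/T)=\mathcal O(n^2\rho_2^2/T)$. It then converts to objective value by multiplying by $L(\mathbf g^\star)\asymp 1/(n\varepsilon)$, using self-concordance on $\{\|\bar{\mathbf g}_T-\mathbf g^\star\|\le\varepsilon\}$ and high-moment Markov bounds on the complement.

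You instead run the Polyak--Juditsky decomposition directly in the $H$-norm, so your leading term is $\mathrm{tr}(H^{-1}\Sigma)/(2T)\le\|H^{-1}\|\,\mathrm{tr}\,\Sigma/(2T)=\mathcal O(n\rho_2/T)$. This buys two things:
\begin{itemize}
\item It uses only $H\succeq(\beta_{\min}/\rho_2)I$ and never an upper bound on $\|H\|$. The paper's $L(\mathbf g^\star)\asymp1/(n\varepsilon)$ tacitly needs $\max_k(\rho_2-g_k^\star)=\mathcal O(\rho_2)$.
\item It is sharper by exactly the local condition number. The paper's factor $\rho_2/\varepsilon$ comes from bounding $\|H\|\,\|H^{-1}\|^2\,\mathrm{tr}\,\Sigma$.
\end{itemize}
Hence your Step~4 explanation is wrong: the self-concordant and higher-order terms are $o(1/T)$ and cannot multiply the leading term. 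It is also unnecessary, since $\mathcal O(n\rho_2/T)$ already implies the stated rate whenever $\varepsilon\lesssim\rho_2$.

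\textbf{The gap: moment control.} Step~2 gives only $\mathbb{E}\|\mathbf g_t-\mathbf g^\star\|^2\lesssim\eta_t$. That handles the telescoping terms but not the rest of Step~3.
\begin{itemize}
\item \emph{Remainder.} You need $\mathbb{E}\|\frac1T\sum_t H^{-1/2}R_t\|^2=o(1/T)$. With $\|R_t\|\lesssim\|\mathbf g_t-\mathbf g^\star\|^2$ (the Hessian is Lipschitz on $\mathcal K$), this requires $\mathbb{E}\|\mathbf g_t-\mathbf g^\star\|^4\lesssim\eta_t^2$. Second moments with $\|R_t\|\le 2L\|\mathbf g_t-\mathbf g^\star\|$ only give $\mathcal O(T^{-\gamma})$, which is not $o(1/T)$ for $\gamma<1$.
\item \emph{Bad event.} Where $e^{M\|\bar{\mathbf g}_T-\mathbf g^\star\|_\infty}$ is not $\mathcal O(1)$, boundedness on $\mathcal K$ does not help, because $\mathcal K$ is unbounded below and so is $\mathcal J$ on it. You need something like $\frac{L_{\mathcal K}}{2}\mathbb{E}\big[\|\bar{\mathbf g}_T-\mathbf g^\star\|^2\mathbf 1\{\|\bar{\mathbf g}_T-\mathbf g^\star\|>r\}\big]\le\frac{L_{\mathcal K}}{2r^2}\mathbb{E}\|\bar{\mathbf g}_T-\mathbf g^\star\|^4=\mathcal O(T^{-2})$. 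That is a fourth-moment bound for the averaged iterate.
\end{itemize}
These higher moments are exactly what the paper imports (Gadat--Panloup Prop.~1.1, Godichon-Baggioni Thm.~4.4). They hold here because the noise is almost surely bounded on $\mathcal K$. Your plan to bound $\mathbb{E}\|\mathbf g_t-\mathbf g^\star\|_\infty^2$ is trivial from $\|\cdot\|_\infty\le\|\cdot\|_2$ and does not supply them. Adding $\mathbb{E}\|\mathbf g_t-\mathbf g^\star\|^{4}\lesssim\eta_t^2$, plus the corresponding $\mathcal O(T^{-2})$ bound for $\bar{\mathbf g}_T$, closes the argument.

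You can also drop the projection terms altogether. The transport part of $\widehat\nabla_k\mathcal J$ is nonnegative, so for $\mathbf g\in\mathcal K$ and $\eta\beta_{\max}\le\rho_2$ one has $g_k-\eta\widehat\nabla_k\mathcal J(\mathbf g)\le(1-\eta\beta_k/\rho_2)g_k+\eta\beta_k\le\rho_2+\delta$. The projection is therefore inactive after a deterministic time.
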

 \noindent We give here a sketch of proof, to illustrate where and how we are able to leverage the local condtionning near the optimum.
\noindent \textit{Sketch of Proof.} Classical results \cite{polyak1992acceleration, gadat2017optimal} using global strong convexity and uniform gradient error bound from Lemma \ref{lem:global_grad_bound} assure, with $\mathbf{H} = \nabla^2 \J(\bfg^\star)$ and $\mathbf{\Sigma}$ the noise covariance at the optimum:
\begin{equation*}
    \mathbb{E}[\|\bar{\mathbf{g}}_T - \mathbf{g}^\star\|^2] \le \frac{\text{Tr}(\mathbf{H}^{-1}\mathbf{\Sigma}\mathbf{H}^{-1})}{Tm_b} + o(1/T).
\end{equation*}
From the global strong convexity of $\mathcal{J}$, this term scales as $\mathcal{O}(\rho_2^2 n^2/T)$.
To handle the non-global smoothness, we split the objective gap and will use the generalized self-concordance property. 

\noindent Locally, $\mathcal{J}$ is $L$-smooth with $L \propto 1/n\varepsilon$. From the generalized self-concordance, we have (see Corollary \ref{cor::self_conc_hessian}): $L(\bfg_T) \leq \exp(\tfrac{2 + 3\alpha}{\varepsilon}\|\bfg_T - \bfg^\star\|)L(\bfg^\star)$. 
Therefore
\begin{align*}
    \mathbb{E}[\mathcal{J}(\bar{\mathbf{g}}_T) - \mathcal{J}^\star] \leq \frac{L(\bfg^\star)(2+3\alpha)}{2} \mathbb{E}[\|\bar{\mathbf{g}}_T - \mathbf{g}^\star \|^2]  + \frac{C_1}{\varepsilon} \mathbb{P}(\| \bar{\mathbf{g}}_T - \mathbf{g}^\star \| \geq \varepsilon).
\end{align*}

Due to the concentration of the average SGD, with $\gamma_t \propto t^{-b}, b\in (1/2, 1)$, which gives high order moments for both the SGD and ASGD schemes, $\mathbb{P}(\| \bar{\mathbf{g}}_T - \mathbf{g}^\star \| \geq \varepsilon)$ is negligible, which concludes. \hfill $\square$

\textbf{Remark:} Observe that for this proof of adaptivity, we need $\gamma_t \propto t^{-b}$ with $b<1$. However, we refer to Appendix \ref{app::towards}, for a motivation of the study of other SGD schemes and learning rate, that could lead to an even better complexity, by exploiting even more the good local conditioning.

\subsubsection{Numerical Experiments: Semi-Discrete Setting}

We validate our theoretical findings on a synthetic 10-dimensional mixture of Gaussians with 4 modes. The target $\nu$ is discrete with $n=2,000$ uniformly weighted points. To evaluate convergence, we compute a high-precision ground truth $\mathbf{g}^\star$ using the deterministic Adaptive NAG solver (Section~\ref{subsec:full_discrete}) run to machine precision, with $n' = 10,000$ points for the source measure. All results report the average of 20 independent runs; variance was negligible and is omitted for clarity.

\noindent\textbf{Efficiency of PASGD.} We reaffirm here that scaling the learning rate by the inverse of the strong convexity yields optimal results. Figure~\ref{fig:sgd_comparison} compares standard SGD against Projected Averaged SGD (PASGD) using a step size decay of $\eta_t = C \frac{n}{\rho_2}(t+ 1/\varepsilon)^{-2/3}$ for varying scales $C$. Here, $\varepsilon = 0.01$.  For $b = 2/3$, the choice $C=1$, which corresponds to the natural scale dictated by the strong convexity of $\mathcal{J}$, achieves the best performance. 

\begin{figure}[h]
    \centering
    \includegraphics[width=0.35\linewidth]{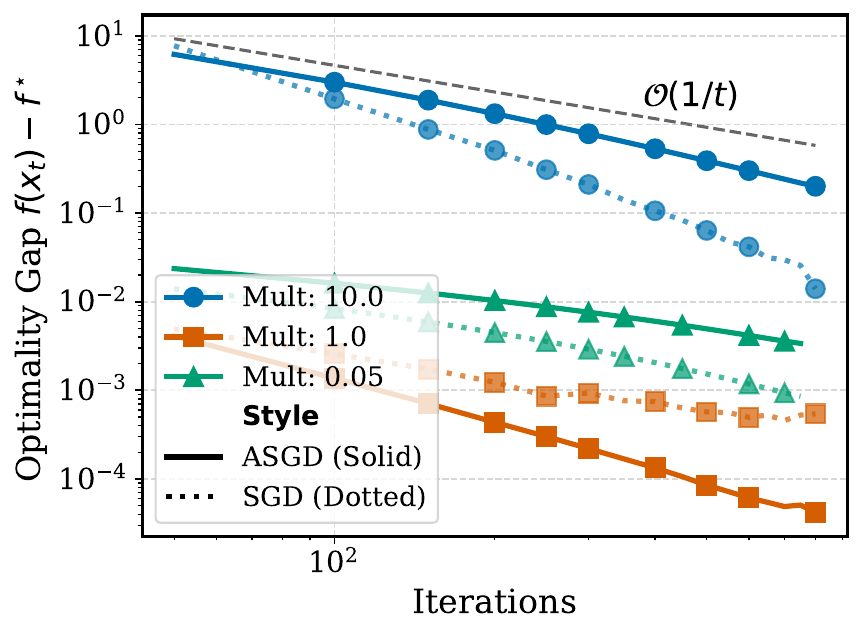}
    \caption{\textbf{PASGD vs. SGD.} Convergence of the objective gap on a semi-discrete UOT problem ($n=2000$), with $\eta_t = C \frac{n}{\rho_2}(t+ 1/\varepsilon)^{-2/3}$,  $C \in \{0.05, 1, 10\}$. PASGD confirms the $\mathcal{O}(1/T)$ rate and shows superior performance compared to SGD. }
    \label{fig:sgd_comparison}
\end{figure}

\textbf{Impact of Regularization.} Figure~\ref{fig:eps_dependence} examines the sensitivity of the algorithm to the regularization parameter $\varepsilon$. While optimal theoretical bounds suggest the possibility of eliminating the dependency on $\varepsilon$, we observe that practical performance retains some sensitivity. Specifically, decreasing $\varepsilon$ impacts the convergence of the objective function, which scales with roughly $1/\varepsilon$. However, importantly, the squared distance to the optimum $\|\bar{\mathbf{g}}_t - \mathbf{g}^\star\|^2$ exhibits a much milder dependence on $\varepsilon$, demonstrating significant robustness in parameter space.

\begin{figure}[h]
    \centering
    \includegraphics[width=0.5\linewidth]{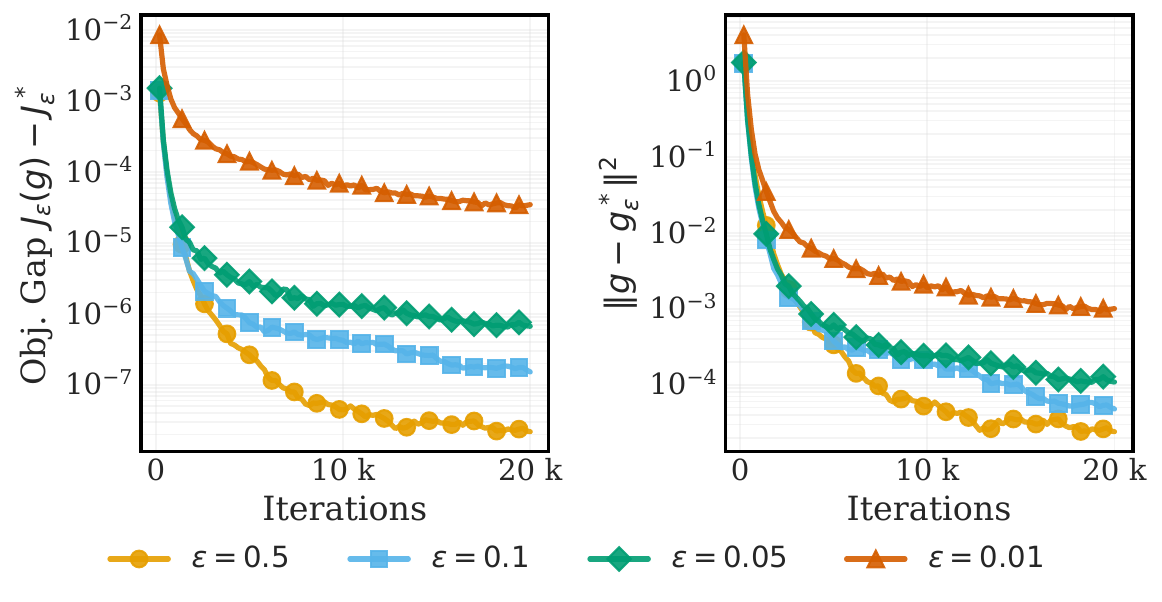}
    \caption{\textbf{Effect of $\varepsilon$.} Convergence profiles for varying entropic regularization levels. The objective gap (Left) reflects a practical dependence on $\varepsilon$, whereas the parameter error $\|\bar{\mathbf{g}}_t - \mathbf{g}^\star\|^2$ (Right) demonstrates higher robustness to the regularization parameter.}
    \label{fig:eps_dependence}
\end{figure}

\paragraph{Large-Scale Application: Color Transfer.}
To demonstrate scalability, we replicate the color transfer task of \cite{kemertastruncated} on high-resolution images (n=$512^2$ and n=$1024^2$ pixels). We employ our PASGD solver on a single modern GPU using a batch size of $m_b=32$ and a robust, non-tuned learning rate schedule $\eta_t = \frac{n\rho_2}{\e^{-1} + t^{2/3}}$. We set $\varepsilon = 5\cdot 10^{-3}$. Our solver exhibits great efficiency gains compared to the state-of-the-art: while \citet{kemertastruncated} reports a runtime of $\approx 10$ hours for the $1024^2$ task (using \texttt{PyKeops} for memory management), our method converges in just 30 minutes (and $\approx 2$ minutes for $512^2$). This strictly linear $\mathcal{O}(n)$ complexity, both in memory and compute, establishes Entropic UOT via PASGD as a scalable alternative to $\mathcal{O}(n^2)$ per iteration OT baselines for large-scale tasks.

\begin{figure}[ht]
    \centering
    \subfigure[Source]{
        \includegraphics[width=0.18\textwidth]{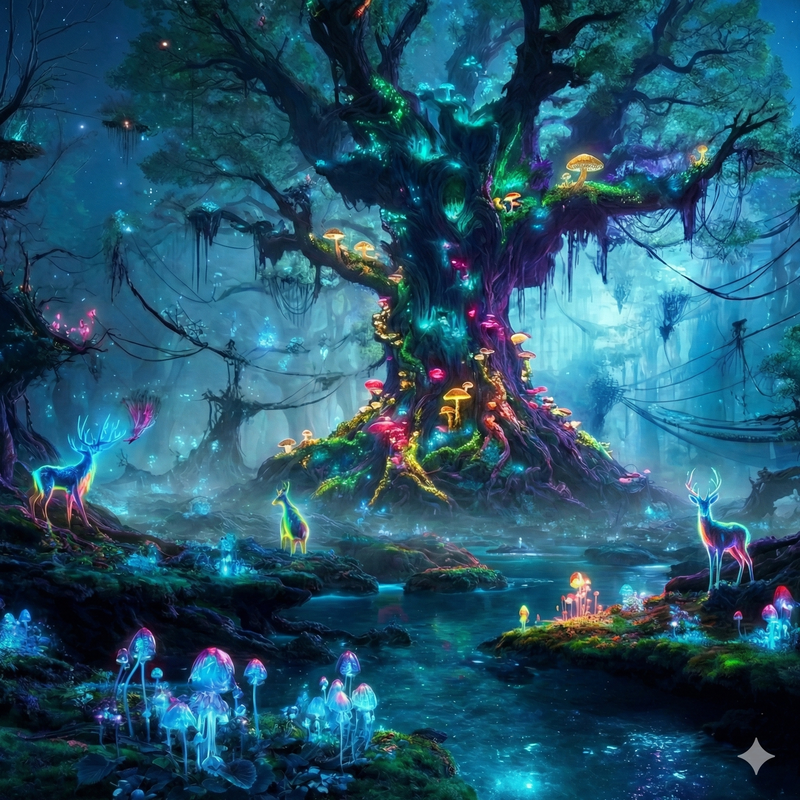}
    }
    \subfigure[Target]{
        \includegraphics[width=0.18\textwidth]{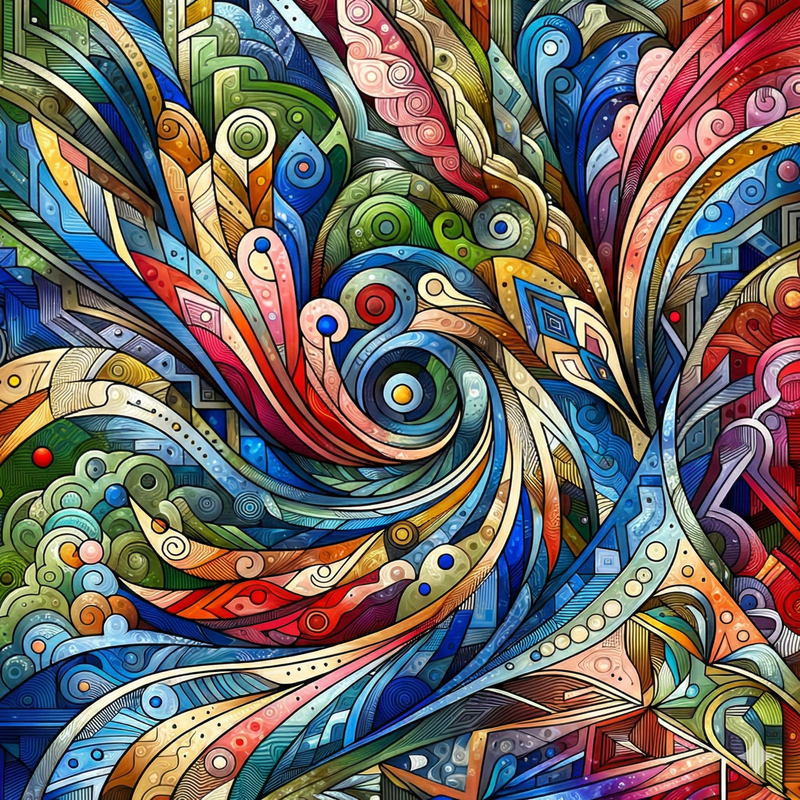}
    }
    \subfigure[$\rho = 0.1$]{
        \includegraphics[width=0.18\textwidth]{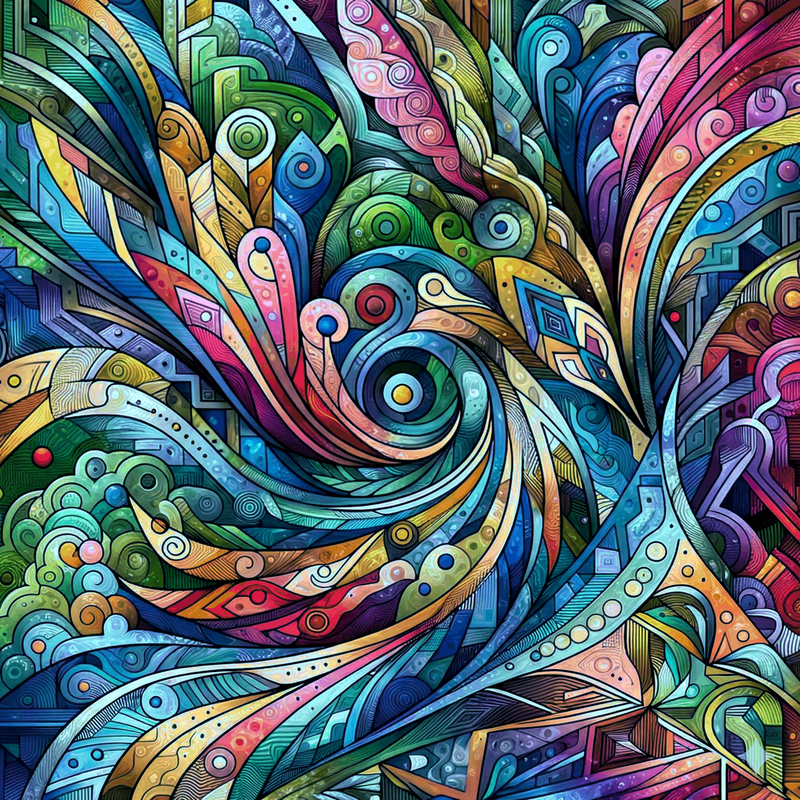}
    }
    \subfigure[$\rho = 1$]{
        \includegraphics[width=0.18\textwidth]{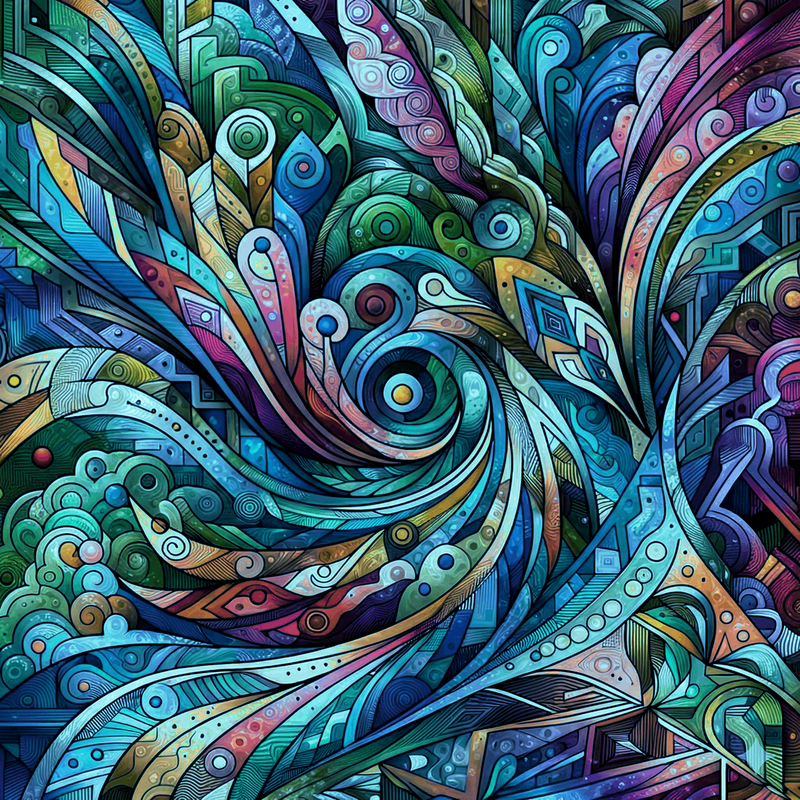}
    }
    \subfigure[$\rho = 10$]{
        \includegraphics[width=0.18\textwidth]{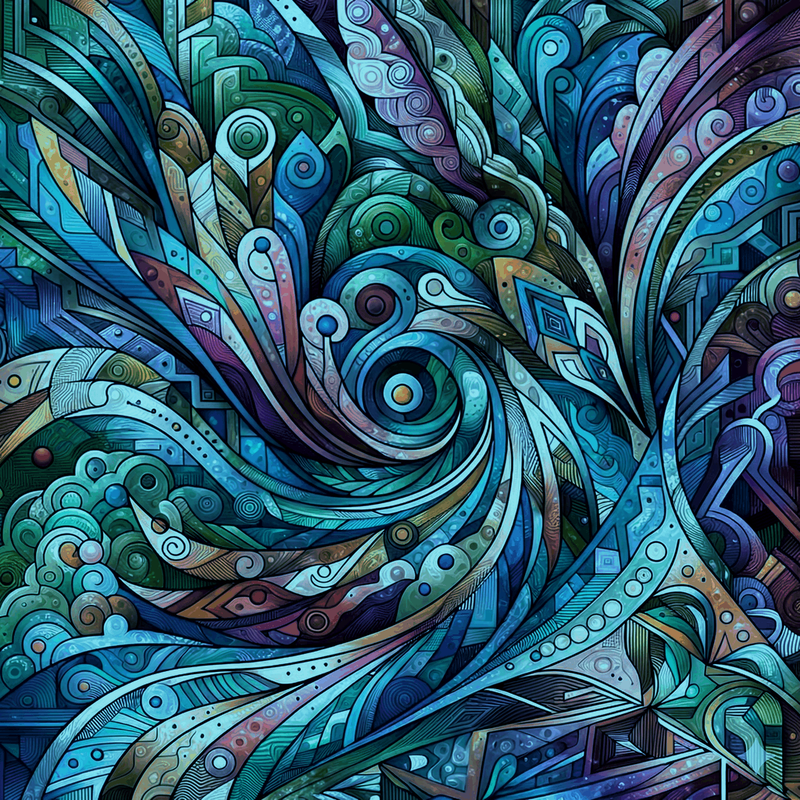}
    }

    \caption{\textbf{High-Resolution Color Transfer ($1024 \times 1024$).} We transport the source color distribution (a) to the target geometry (b). The parameter $\rho$ controls the fidelity of the mass transfer. At $\rho=0.1$ (c), the relaxation allows for partial matching. At $\rho=10$ (e), the penalty enforces nearly balanced transport.}
    \label{fig:five_images}
\end{figure}
\subsection{Discrete - Full Batch setting}\label{subsec:full_discrete}

We now transition to the full-batch setting, which represents the most common scenario for practitioners where measures are discrete or have been pre-discretized.

\begin{setting}[Full Batch Setting]
    We assume $\mu$ and $\nu$ are discrete positive measures supported on $n_1$ and $n_2$ points, respectively:
     $ \mu = \sum_{j=1}^{n_1} \alpha_j \delta_{x_j}, \  \nu = \sum_{j=1}^{n_2} \beta_j \delta_{y_j}.$
     
    To ensure numerical stability, we assume the weights of $\nu$ are balanced relative to the resolution $n_2$: there exist constants $b, B \approx 1$ such that $b/n_2 \le \beta_j \le B/n_2$ for all $j$.
\end{setting}

\paragraph{Data-Dependent Smoothness.}
Standard acceleration schemes rely on a global Lipschitz constant $L$ to determine step sizes. In the UOT semi-dual, however, the global $L$ is prohibitively large, while the local curvature near the optimum is up to $n$ times better. We link here Entropic UOT to the recent line of work on adaptive gradient methods for $(L_0, L_1)$-smooth functions \citep{zhang2019gradient}, where local smoothness scales with the gradient norm. We derive a specialized form of this property that holds explicitly along gradient descent trajectories.

\begin{proposition}[Asymmetric $L_0$--$L_1$ smoothness along a gradient step]
\label{prop:asym_smoothness_line}
Consider the segment
$g(s):=g_t-s\lambda_t \nabla \mathcal J(g_t)$, $s\in[0,1]$, with step size
\[
\lambda_t := \frac{1}{L(g_t)},
\qquad
L(g_t):=
\frac{C}{\varepsilon}\|\nabla \mathcal J_{\trans}(g_t)\|_\infty
+\frac{C\beta_{\max}}{\rho_2}
+\frac{C}{e\varepsilon }\,\|\nabla \mathcal J(g_t)\|_\infty,
\]
where $C = 6e$ for $\chi^2$-source and $C=(2+3\alpha)e$ for KL-source.
Then for any two points $g_1,g_2$ on the segment $\{g(s):s\in[0,1]\}$,
\[
\|\nabla \mathcal J(g_1)-\nabla \mathcal J(g_2)\|
\;\le\;
L(g_t)\,\|g_1-g_2\|.
\]
\end{proposition}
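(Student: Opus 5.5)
By the mean value theorem applied to $\nabla\J$ along the segment, it suffices to show that $\sup_{s\in[0,1]}\|\nabla^2\J(g(s))\|_{\mathrm{op}}\le L(g_t)$. Indeed, for $g_1,g_2$ on the segment we have $\nabla\J(g_1)-\nabla\J(g_2)=\int_0^1\nabla^2\J(g_2+\tau(g_1-g_2))(g_1-g_2)\,d\tau$, and every intermediate point again lies on the segment. The plan is to control the Hessian along the segment by its value at $g_t$, using generalized self-concordance, and then bound the Hessian at $g_t$ with Theorem~\ref{thm:smoothness_unified}.

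\textbf{Step 1 (Hessian transport).} Integrating the inequality of Proposition~\ref{prop::self_concordance} (via Corollary~\ref{cor::self_conc_hessian}) gives, for any $u,v$,
\[
\nabla^2\J(v)\preceq \exp\big(M\|v-u\|_\infty\big)\,\nabla^2\J(u).
\]
Take $u=g_t$ and $v=g(s)$. Then $\|g(s)-g_t\|_\infty\le \lambda_t\|\nabla\J(g_t)\|_\infty$, so
\[
M\lambda_t\|\nabla\J(g_t)\|_\infty=\frac{M\|\nabla\J(g_t)\|_\infty}{L(g_t)}\le \frac{M\varepsilon\, e}{C}.
\]
The last inequality holds because $L(g_t)\ge \frac{C}{e\varepsilon}\|\nabla\J(g_t)\|_\infty$. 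With $C=Me\varepsilon$ this is exactly $1$. This matches the stated constants: $C=(2+3\alpha)e$ for the KL source and $C=6e$ for the $\chi^2$ source. Hence $\nabla^2\J(g(s))\preceq e\,\nabla^2\J(g_t)$ for all $s\in[0,1]$.

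\textbf{Step 2 (Hessian at $g_t$).} Theorem~\ref{thm:smoothness_unified} gives
\[
\|\nabla^2\J(g_t)\|_{\mathrm{op}}\le\frac1\varepsilon\|\nabla\J_{\trans}(g_t)\|_\infty+\frac{\beta_{\max}}{\rho_2}.
\]
Since $\J$ is convex, $0\preceq\nabla^2\J(g(s))\preceq e\nabla^2\J(g_t)$, so the operator norms compare as well. Therefore
\[
\|\nabla^2\J(g(s))\|_{\mathrm{op}}\le \frac{e}{\varepsilon}\|\nabla\J_{\trans}(g_t)\|_\infty+\frac{e\beta_{\max}}{\rho_2}.
\]
This is at most $L(g_t)$, because $C\ge e$ since $M\varepsilon\ge1$. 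The third term of $L(g_t)$ is nonnegative, so it only adds slack. Combining with the mean value argument gives the claim.

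\textbf{Main obstacle.} The delicate point is Step 1: passing from the third-derivative inequality to a multiplicative Hessian bound measured in $\|\cdot\|_\infty$. The standard argument considers $\phi(\tau)=\langle h,\nabla^2\J(u+\tau(v-u))h\rangle$ for fixed $h$. Its derivative is $\nabla^3\J[v-u,h,h]$, not the symmetric form $\nabla^3\J[h,h,h]$ that Proposition~\ref{prop::self_concordance} controls. I would therefore check that the proof of Proposition~\ref{prop::self_concordance} actually yields the mixed bound $|\nabla^3\J[w,h,h]|\le M\|w\|_\infty\langle h,\nabla^2\J h\rangle$. This is plausible, since the third derivative arises from differentiating the weights $w_k$ and $Z^\alpha$, each contributing a factor of size $\|w\|_\infty/\varepsilon$. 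Alternatively, I would cite Corollary~\ref{cor::self_conc_hessian} directly, as the sketch of Theorem~\ref{thm:pasgd_convergence} does. Given the mixed bound, Gr\"onwall's inequality applied to $\phi$ yields the exponential factor.
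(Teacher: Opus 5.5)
Your proposal is correct and follows the paper's proof step for step. Both arguments transport the Hessian along the segment via Corollary~\ref{cor::self_conc_hessian}, with the step size forcing $M\lambda_t\|\nabla\mathcal J(g_t)\|_\infty\le 1$ so that $\nabla^2\mathcal J(g(s))\preceq e\,\nabla^2\mathcal J(g_t)$, then apply Theorem~\ref{thm:smoothness_unified} at $g_t$ and the mean value theorem. The paper's proof uses the slightly smaller constant $e\bigl(\tfrac1\varepsilon\|\nabla\mathcal J_{\trans}(g_t)\|_\infty+\tfrac{\beta_{\max}}{\rho_2}\bigr)+M\|\nabla\mathcal J(g_t)\|_\infty$, which your remark $C\ge e$ reconciles with the stated $L(g_t)$.

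Your caveat that the Hessian comparison really needs a mixed bound $|\nabla^3\mathcal J(g)[v,h,h]|\lesssim\|v\|_\infty\langle h,\nabla^2\mathcal J(g)h\rangle$ is well placed. The paper invokes Corollary~\ref{cor::self_conc_hessian} exactly as you do, without deriving it from the symmetric inequality of Proposition~\ref{prop::self_concordance}.
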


\begin{wrapfigure}{R}{0.5\textwidth}
    \begin{minipage}{0.50\textwidth}
    \vspace{-1em}
        \begin{algorithm}[H]
        \caption{Smoothness Adaptive NAG (with safeguard restarts)}
        \label{alg:adaptive_nag}
        \small
        \setstretch{1.2}
        \begin{algorithmic}[1]
        \STATE \textbf{Input data:} $\mu  = \sum_{i=1}^{n_1} \alpha_i \delta_{x_i}, \nu = \sum_{j=1}^{n_2} \beta_j \delta_{y_j}$
        \STATE \textbf{UOT parameters:} $\varepsilon, \rho_1, \rho_2 > 0$
        \STATE \textbf{Sets:} $\mathcal K := \{g\in\mathbb R^n: g_i \le \rho_2+0.1\},\quad \mathcal K_1 := \{g\in\mathbb R^n: g_i \le \rho_2+1\}$
        \STATE $y_0 \leftarrow g_0 = (0, \dots, 0) \in \mathbb{R}^n$
        \FOR{$t=0$ \textbf{to} $T-1$}
            \STATE $
            \begin{aligned}
            L_t \leftarrow\;
            \tfrac{C}{\varepsilon}\|\nabla \mathcal J_{\trans}(y_t)\|_\infty
            +& \tfrac{C\beta_{\max}}{\rho_2} \\
            &+ \tfrac{C}{e\varepsilon }\,\|\nabla \mathcal J(y_t)\|_\infty
            \end{aligned}
            $
            \STATE $\theta_t \leftarrow \frac{\sqrt{L_t}-\sqrt{\beta_{\min}/\rho_2}}{\sqrt{L_t}+\sqrt{\beta_{\min}/\rho_2}}$
            \STATE $g_{t+1} \leftarrow \Pi_{\mathcal{K_{0.1}}}[y_t - \tfrac{1}{L_t} \nabla\mathcal{J}(y_t)]$
            \STATE $y_{t+1} \leftarrow g_{t+1} + \theta_t(g_{t+1} - g_t)$
             \STATE \textbf{Restart:} \textbf{if} $y_{t+1} \notin \mathcal K_1$ \textbf{then} $y_{t+1} \leftarrow g_t$
        \ENDFOR
        \STATE \textbf{Output:} $g_T$
        \end{algorithmic}
        \end{algorithm}
        \end{minipage}
        \normalsize
\end{wrapfigure}

\noindent This proposition is pivotal. It shows that, rather than relying on a crude worst-case global constant, we can use a local smoothness estimate \(L(g_t)\) that is directly controlled by the computed gradient. As the algorithm converges and \(\nabla \mathcal{J}(g_t)\to 0\), this bound tightens naturally toward \(L(g^\star)\approx \beta_{\max}\bigl(1+\rho_2/\varepsilon\bigr)\), which in turn allows the effective step size to increase. Leveraging this local geometry, we propose an Adaptive Nesterov Accelerated Gradient (ANAG) method. In our setting, each ANAG iteration has complexity \(\mathcal{O}(n_1 n_2)\).

\noindent While ANAG is structurally related to the heuristic adaptive schemes of \citet{malitsky2019adaptive}, which estimate curvature via finite differences, our method uses the analytical upper bound \(L(g_t)\) derived from the problem structure; combined with our \((L_0,L_1)\)-type smoothness control and the projection set, this enables convergence guarantees. Finally, to ensure that all smoothness arguments remain valid, we include a simple safeguard restart whenever the extrapolated point leaves the region \(\mathcal{K}_1\), where \(\mathcal{J}\) has controlled smoothness. This restart is expected to be rare in practice, and often absent, since the optimizer lies in \(\mathcal{K}_0\); it is introduced primarily to simplify the analysis.

\begin{theorem}[Adaptive NAG Convergence Rate]\label{th::anag_cv}(Proof in Appendix \ref{app::proof_th::anag_cv}.) 
    Let $R$ be the number of restarts. Then, the iterates generated by Algorithm \ref{alg:adaptive_nag} satisfy
    \begin{equation}\label{eq:nag_adaptive_rate}
    \mathcal{J}(g_{T+1}) - \mathcal{J}^\star
    \leq
    2^{R}\left( \mathcal{J}(g_0) -\mathcal{J}^\star + \frac{\beta_{\min}}{2\rho_2}\|g_0 - g^\star\|^2\right)
    \prod_{t=0}^{T}\left( 1 - \sqrt{\frac{\beta_{\min}}{\rho_2 L_t}}\right),
    \end{equation}
    
    \noindent Furthermore, the algorithm ensures $y_t \in K_1$ for all $t$, so using $C_{\text{bound}}$ from Lemma \ref{lem:global_grad_bound}, we have $L_t \leq \bar{L} = \mathcal{O}(C_{\text{bound}}/\varepsilon)$ for all $t$. This implies the following rates:
    \begin{enumerate}
        \item \textbf{Global Rate:}  We have at least the contraction rate $1 - \mathcal{O}(\sqrt{\varepsilon/\beta_{\min}\rho_2})$ for both the objective gap and gradient norm.
        \item \textbf{Local Rate:} Since $L_t \leq 2\beta_{\max}\left(\frac{1}{\varepsilon} + \frac{1}{\rho_2} \right) + 2\|\nabla \mathcal{J}(g_{T+1})\|$, assuming $\beta_{\min}/\beta_{\max} \simeq 1$ and substituting the contraction rate of the gradient norm into \eqref{eq:nag_adaptive_rate} shows that, locally, we have a contraction rate of $1 - \mathcal{O}(\sqrt{\varepsilon/\rho_2})$.
    \end{enumerate}
\end{theorem}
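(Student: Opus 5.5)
The proof follows the estimate-sequence analysis of Nesterov's method for strongly convex functions with constant momentum (Nesterov, Lectures, Thm.~2.2.3), adapted to iteration-dependent smoothness and step size. The global ingredients are:
\begin{itemize}
\item strong convexity $\gamma := \beta_{\min}/\rho_2$ of $\mathcal J$ on all of $\mathbb R^n$ (Corollary~\ref{cor:local_conditioning});
\item the per-step smoothness $L_t$ of Proposition~\ref{prop:asym_smoothness_line}, evaluated at $y_t$.
\end{itemize}

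The first step is a projected descent inequality at $y_t$. Since Proposition~\ref{prop:asym_smoothness_line} guarantees $L_t$-Lipschitz gradients on the segment $[y_t, y_t-\tfrac1{L_t}\nabla\mathcal J(y_t)]$, I need this to extend to the projected point $g_{t+1}=\Pi_{\mathcal K}(\cdot)$. Because $\mathcal K$ is a box-type set that only clips coordinates from above, $g_{t+1}$ lies in the axis-aligned box spanned by $y_t$ and the unprojected point. Hence its $\|\cdot\|_\infty$-distance from $y_t$ is no larger than that of the endpoint. I would then redo the self-concordance argument behind the proposition (Corollary~\ref{cor::self_conc_hessian} controls the Hessian in terms of $\|\cdot\|_\infty$-distance) to obtain smoothness on this box. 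This yields the composite gradient-mapping inequality
\[
\mathcal J(g_{t+1}) \le \mathcal J(x) + \langle G_t, y_t - x\rangle - \tfrac{1}{2L_t}\|G_t\|^2 - \tfrac{\gamma}{2}\|x-y_t\|^2,\qquad \forall x\in\mathcal K,
\]
where $G_t=L_t(y_t-g_{t+1})$ is the gradient mapping.

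The second step builds the Lyapunov function $\Phi_t=\mathcal J(g_t)-\mathcal J^\star+\tfrac{\gamma}{2}\|v_t-g^\star\|^2$, with the auxiliary sequence $v_t$ defined as usual from $\theta_t$ and $q_t=\gamma/L_t$. Plugging the inequality of the first step with $x=g_t$ and $x=g^\star$, combined with weights $(1-\sqrt{q_t})$ and $\sqrt{q_t}$, gives the contraction
\[
\Phi_{t+1}\le(1-\sqrt{q_t})\Phi_t .
\]
The standard algebra goes through with varying $q_t$ provided the momentum used between steps is consistent. Here $\theta_t$ is computed from $L_t$ but couples $g_{t+1}$ and $g_t$, so I must check that the $v$-update matches $\theta_t$. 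If it does not match exactly, I would redefine $v_{t+1}$ so that the identity $y_{t+1}=(g_{t+1}+\sqrt{q_t}\,v_{t+1})/(1+\sqrt{q_t})$ holds, and absorb the mismatch using monotonicity arguments. This step is the main obstacle: variable-$L$ Nesterov with a non-monotone $L_t$ generally loses the clean telescoping. I would first try to show that the potential inequality needs only $q_{t+1}\le$ something controlled by $q_t$. Failing that, I would accept a factor and use the restart mechanism to absorb it.

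The third step handles restarts. When $y_{t+1}\notin\mathcal K_1$ we set $y_{t+1}=g_t$, which resets momentum. Then $v_{t+1}=g_t$ and
\[
\Phi \le \mathcal J(g_t)-\mathcal J^\star+\tfrac{\gamma}{2}\|g_t-g^\star\|^2\le 2(\mathcal J(g_t)-\mathcal J^\star),
\]
using strong convexity. Since $\mathcal J(g_t)-\mathcal J^\star\le\Phi$ at the previous step, each restart costs at most a factor $2$. Iterating from $\Phi_0=\mathcal J(g_0)-\mathcal J^\star+\tfrac{\gamma}{2}\|g_0-g^\star\|^2$ yields \eqref{eq:nag_adaptive_rate} with the $2^R$ factor.

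For the consequences, every $y_t\in\mathcal K_1$ by the restart rule, so Lemma~\ref{lem:global_grad_bound} (with $\delta=1$) bounds $\|\nabla\mathcal J_{\trans}\|_\infty$ and $\|\nabla\mathcal J\|_\infty$ on $\mathcal K_1$, giving $L_t\le\bar L=\mathcal O(C_{\rm bound}/\varepsilon)$. This yields the global rate, and the gradient norm contracts at the same rate via $\|\nabla\mathcal J\|^2\le 2\bar L(\mathcal J-\mathcal J^\star)$. For the local rate, write $\|\nabla\mathcal J_{\trans}(y_t)\|_\infty\le\beta_{\max}+\|\nabla\mathcal J(y_t)\|_\infty+\beta_{\max}|y_{t,k}|/\rho_2$. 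Use Proposition~\ref{prop:optimality_constraints} together with the fact that $g^\star\ge$ a bounded value near the optimum. Then $L_t$ is at most $\mathcal O(\beta_{\max}/\varepsilon)$ plus $\mathcal O(\|\nabla\mathcal J(y_t)\|/\varepsilon)$. The latter term decays geometrically by the global rate, so eventually $q_t\gtrsim\varepsilon/\rho_2$ and the product gives the contraction $1-\mathcal O(\sqrt{\varepsilon/\rho_2})$.
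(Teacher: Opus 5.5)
\textbf{Main gap: the contraction with varying $L_t$.} Your plan has the same skeleton as the paper's proof. It uses a potential $\mathcal J(g_t)-\mathcal J^\star+\tfrac{\gamma}{2}\|v_t-g^\star\|^2$ with $\gamma=\beta_{\min}/\rho_2$, a projected descent inequality from Proposition~\ref{prop:asym_smoothness_line}, a factor $2$ per restart from strong convexity, and $L_t\le\bar L$ on $\mathcal K_1$. The gap is the one step that carries the whole bound. You never establish $\Phi_{t+1}\le(1-\sqrt{q_t})\Phi_t$ for an iteration-dependent $q_t=\gamma/L_t$, and your fallback cannot deliver the statement.

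The mismatch you suspect is real. From $y_t=(g_t+\sqrt{q_t}\,v_t)/(1+\sqrt{q_t})$ and $v_{t+1}=(1-\sqrt{q_t})v_t+\sqrt{q_t}\,(y_t-\gamma^{-1}G_t)$ one gets $v_{t+1}=g_t+q_t^{-1/2}(g_{t+1}-g_t)$. The same coupling at step $t+1$ then needs the momentum coefficient $\bigl(\sqrt{q_{t+1}/q_t}-\sqrt{q_{t+1}}\bigr)/(1+\sqrt{q_{t+1}})$. This depends on $L$ evaluated at the very point $y_{t+1}$ being constructed. Algorithm~\ref{alg:adaptive_nag} instead uses $\theta_t=(1-\sqrt{q_t})/(1+\sqrt{q_t})$, which is the value for $q_{t+1}=q_t$.

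Absorbing the discrepancy ``with the restart mechanism'' does not work. The safeguard restart fires only when $y_{t+1}\notin\mathcal K_1$, not when $L_t$ changes. A per-step loss $c_t>1$ therefore compounds into $\prod_{t\le T}c_t$ and is not covered by $2^R$.

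Nor can you import the paper's handling of this step. It normalizes the potential as $\Psi_t=E_t/\sqrt{L_{t-1}}$ and sets $z_t=g_t+(\sqrt{L_t/\gamma}-1)(g_t-g_{t-1})$. It then asserts $\Psi_{t+1}\le\sqrt{L_{t-1}/L_t}\,(1-\sqrt{\gamma/L_t})\Psi_t$ by appeal to the constant-$L$ computation of Bansal--Gupta, not by an actual computation. That inequality is exactly equivalent to $E_{t+1}\le(1-\sqrt{\gamma/L_t})E_t$. Moreover, this $z_t$ is consistent with $y_t=g_t+\theta_{t-1}(g_t-g_{t-1})$ only when $L_t=L_{t-1}$. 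What is missing is a genuine estimate of the mismatch term in the potential, or a modification of the scheme itself.

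\textbf{Two further steps fail as written.} First, $g_{t+1}$ need not lie in the box spanned by $y_t$ and $u_t:=y_t-L_t^{-1}\nabla\mathcal J(y_t)$. The restart rule only guarantees $y_t\in\mathcal K_1$, not $y_t\in\mathcal K_{0.1}$. If $y_{t,k}>\rho_2+0.1$, then $\nabla_k\mathcal J(y_t)>0$, so $u_{t,k}<y_{t,k}$. Whenever also $u_{t,k}>\rho_2+0.1$, the clipping moves coordinate $k$ past $u_{t,k}$ down to $\rho_2+0.1$. The $\|\cdot\|_\infty$ displacement from $y_t$ can then be up to $0.9$. For such a displacement, Corollary~\ref{cor::self_conc_hessian} only yields a factor $e^{0.9M}$ with $M=\Theta(1/\varepsilon)$, so $L_t$-smoothness on $[y_t,g_{t+1}]$ does not follow. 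The paper applies Proposition~\ref{prop:asym_smoothness_line} to $[y_t,g_{t+1}]$ directly and has the same issue.

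Second, Lemma~\ref{lem:global_grad_bound} bounds only $\nabla\mathcal J_{\trans}$. The term $\beta_k(y_{t,k}/\rho_2-1)$ in $\nabla\mathcal J$, and hence $L_t$, is unbounded on $\mathcal K_1$, since $\mathcal K_1$ constrains coordinates only from above. So $L_t\le\bar L$ additionally requires a bound on $\|y_t-g^\star\|$, for example from the potential and strong convexity.
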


Therefore, in our setting, we have a local total complexity of $\mathcal{O}(n_1n_2\sqrt{\rho_2/\varepsilon})\ln(1/\delta_{\text{acc}})$ for $\delta_{\text{acc}}$-accuracy. This result formally confirms our geometric intuition: while the initial convergence may depend on the problem size, the adaptive solver rapidly transitions to a regime where the complexity is governed solely by the local condition number $\mathcal{O}(1/\varepsilon)$, yielding a highly scalable discrete solver.

\paragraph{Experiment: Scale Invariance and Acceleration.}
We validate Theorem \ref{th::anag_cv} using synthetic measures supported on $n$ points in $[0, 1]^{10}$ ($\beta_i = 1/n, \varepsilon = 10^{-2}, \rho_{1,2} = 10$). We compare ANAG against Adaptive GD and two baselines: (i) fixed (conservative) learning rate GD and (ii) fixed learning rate NAG. Figure \ref{fig:nag_scale} highlights the \textbf{scale invariance} of ANAG: trajectories for $n \in \{1500, 3000, 4500\}$ overlap, confirming that the local condition number is independent of problem size $n$. Furthermore, the results isolate the benefits of adaptivity: Adaptive GD significantly outperforms Conservative NAG, demonstrating that exploiting local smoothness ($L(g_t) \ll L_{\text{global}}$) is more critical than blind acceleration. ANAG yields the fastest rates by combining both advantages.

\begin{figure}[h!]
    \centering
    \includegraphics[width=0.5\linewidth]{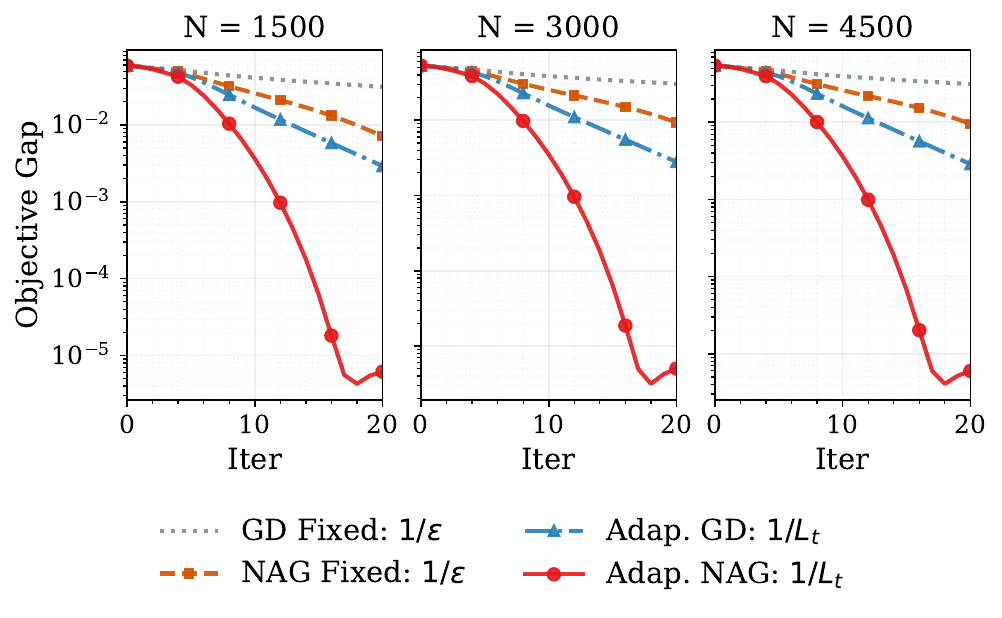}
    \caption{\textbf{Scale Invariance and Adaptive Acceleration.} Convergence on random measures with varying support sizes $n$ ($\varepsilon=0.01, \rho=10$). We compare ANAG against Adaptive GD and Conservative NAG (fixed step $1/L_{\text{global}}$). The overlap of ANAG curves confirms the dimension-independent local complexity, while the superiority of adaptive schemes highlights the benefit of local step sizes.}
    \label{fig:nag_scale}
\end{figure}

\textbf{Remark: ANAG for the KL-KL divergences case.}
Our restriction to the $\chi^2$ target penalty simplifies the analysis by fixing the strong convexity parameter. However, for the $D_1=D_2=\KL$ case, the strong convexity depends on the diagonal of the Hessian, given by the vector $\frac{1}{\rho_2}e^{-\mathbf{g}/\rho_2} \odot \boldsymbol{\beta}$. Assuming the optimal potentials are bounded, one could adapt Algorithm~\ref{alg:adaptive_nag} to update the momentum parameter $\theta_t$ using a data-dependent strong convexity estimate $\mu_t \approx \min_j \left(\frac{\beta_j}{\rho_2}e^{-(g_t)_j/\rho_2} \right)$.

\section*{Comparison to the literature}

\textbf{UOT algorithms.} The primary solvers for Entropic UOT are generalized Sinkhorn algorithms \citep{chizat2018scaling} and their translation-invariant variants \citep{sejourne2022faster}. In the unbalanced setting, Sinkhorn iterations enjoy an enhanced complexity of $\mathcal{O}(n^2/\varepsilon)$ \citep{pham2020unbalanced}, improving upon the $\mathcal{O}(n^2/\varepsilon^2)$ scaling of balanced OT \citep{dvurechensky2018computational}. Beyond Sinkhorn, \citet{nguyen2023unbalanced} proposed a Gradient Extrapolation Method (GEM) for $L_2$-regularized UOT, which produces sparse transport plans. While GEM achieves linear convergence, its condition number $\kappa$ depends on the input measures and scales as $\mathcal{O}(n)$, which is prohibitive for large-scale problems. Alternatively, \citet{chapel2021unbalanced} introduced a Majorization-Minimization scheme with $\mathcal{O}(n^2)$ per-iteration cost, though it currently lacks global convergence rates. In the continuous domain, neural network approximations exist \citep{ gazdieva2024light}, but these methods are computationally heavy and lack convex optimization guarantees.

\begin{table}[h]
    \centering
    \footnotesize 
    \caption{Complexity of UOT algorithms to achieve $\delta$-accuracy, given discrete measures of size $n$. Our methods provide the first accelerated rates for discrete UOT and the first rigorous rates for the semi-discrete regime.}
    \label{tab:algo_comparison}
    
    \begin{tabular}{lccc}
    \toprule
    \textbf{Algorithm} & \textbf{Regime} & \textbf{Regularization} & \textbf{Complexity / Rate} \\
    \midrule
    Gen. Sinkhorn {\scriptsize \citep{pham2020unbalanced}} & Discrete & Entropic & $\mathcal{O}(n^2\ln(1/\delta)/\varepsilon)$ \\
    GEM {\scriptsize \citep{nguyen2023unbalanced}} & Discrete & $L_2$ & $\mathcal{O}(\kappa(n) n^2\ln(1/\delta))$ \\
    MM {\scriptsize \citep{chapel2021unbalanced}} & Discrete & $L_2$, Entropic  & N/A \\
    Neural UOT {\scriptsize \citep{eyring2023unbalancedness}} & Continuous & Entropic & Heuristic \\
    \midrule
    \textbf{Adaptive NAG (Ours)} & \textbf{Discrete} & Entropic & $\tilde{\mathcal{O}}(n^2\ln(1/\delta)/\sqrt{\varepsilon})$ {\footnotesize (Local)} \\
    \textbf{PASGD (Ours)} & \textbf{Semi-Disc.} & Entropic & $\mathcal{O}(n/\varepsilon T)$ \\
    \bottomrule
    \end{tabular}
\end{table}

\textbf{Adaptive Gradient Methods.} The strategy of adapting step sizes via local smoothness estimation was pioneered by \citet{malitsky2019adaptive} and recently extended to $(L_0, L_1)$-smooth functions \citep{gorbunovmethods, vankovoptimizing}. While these general frameworks often require complex algorithmic adjustments to handle potentially unbounded curvature, our analysis exploits the specific geometry of the UOT semi-dual. We show that the smoothness is locally bounded, enabling us to prove the convergence of a simpler Adaptive NAG scheme with tighter complexity guarantees than generic $(L_0, L_1)$ approaches.

\section{Conclusion}
\label{sec:conclusion}

In this work, we demonstrate that the geometry of the Entropic UOT semi-dual is naturally suited for adaptive first-order methods. In the semi-discrete regime, our stochastic scheme provides the first rigorous convergence guarantees for UOT, matching the $\mathcal{O}(n^2/\varepsilon)$ efficiency of Sinkhorn while enabling strictly linear scalability. In the full-batch discrete setting, our Adaptive NAG achieves a superior local accelerated complexity of $\tilde{\mathcal{O}}(n^2/\sqrt{\varepsilon})$ and a global worst-case of $\tilde{\mathcal{O}}(n^{2.5}/\sqrt{\varepsilon})$.

Looking forward, \textbf{exploiting the key geometric properties derived in this work suggests} further potential in bridging UOT with generalized $(L_0, L_1)$-smoothness frameworks to design tailored solvers. Furthermore, promising directions to improve the global worst-case complexity include (i) AdaGrad-type algorithms \citep{duchi2011adaptive} in the semi-discrete setting to further leverage global conditioning and (ii) the integration of decreasing regularization schedules \citep{schmitzer2019stabilized} in the discrete setting. We hypothesize that leveraging the strong convexity of the semi-dual in this manner could eliminate the current initialization overhead, potentially securing a global $\tilde{\mathcal{O}}(n^2/\sqrt{\varepsilon})$ convergence rate.

\bibliography{references}


\doparttoc 
\faketableofcontents 

\part{} 

\appendix
\addcontentsline{toc}{section}{Appendix} 
\parttoc 

\section{Nomenclature}
\label{app:nomenclature}

Table~\ref{tab:notation} summarizes the specific mathematical notations and operators used throughout the paper.

\begin{table}[h]
    \centering
    \renewcommand{\arraystretch}{1.3}
    \caption{Nomenclature and List of Symbols}
    \label{tab:notation}
    \small
    \begin{tabular}{@{}p{0.22\textwidth} p{0.73\textwidth}@{}}
        \toprule
        \textbf{Symbol} & \textbf{Description} \\
        \midrule
        \multicolumn{2}{c}{\textsc{Measure Theory \& Operators}} \\
        \midrule
        $\mathcal{M}_+(\mathcal{X})$ & Space of finite non-negative measures on $\mathcal{X}$. \\
        $\alpha \ll \beta$ & Absolute continuity: measure $\alpha$ is dominated by $\beta$. \\
        $\delta_x$ & Dirac mass at location $x$. \\
        $f \lesssim f$ & Inequality up to a positive constant: $f (\cdot) \le C \cdot g(\cdot)$ for some universal $C > 0$. \\
        $\langle \cdot, \cdot \rangle$ & Standard Euclidean inner product. \\
        $\odot$ & Hadamard (element-wise) product. \\
        $\Pi_{\mathcal{K}}$ & Euclidean projection onto the set $\mathcal{K}$. \\
        \midrule
        \multicolumn{2}{c}{\textsc{Unbalanced OT \& Divergences}} \\
        \midrule
        $D_\varphi(\cdot \mid \cdot)$ & Csisz\'ar $\varphi$-divergence defined by convex generator $\varphi$. \\
        $\varphi^c$ & Convex conjugate (Legendre-Fenchel transform) of $\varphi$. \\
        $\rho_1, \rho_2$ & Marginal penalty weights for source and target, respectively. \\
        $\alpha$ & Scaling exponent for the KL-source case: $\alpha = \frac{\varepsilon}{\varepsilon + \rho_1}$. \\
        $W(\cdot)$ & Lambert $W$ function (inverse of $z \mapsto z e^z$). \\
        \midrule
        \multicolumn{2}{c}{\textsc{Semi-Dual Geometry}} \\
        \midrule
        $B_j(x, \mathbf{g})$ & Gibbs kernel term: $\beta_j \exp((g_j - c(x, y_j))/\varepsilon)$. \\
        $Z(x, \mathbf{g})$ & Partition function: $\sum_{j=1}^n B_j(x, \mathbf{g})$. \\
        $w_k(x, \mathbf{g})$ & Normalized transport weights: $B_k(x, \mathbf{g}) / Z(x, \mathbf{g})$. \\
        $U(x, \mathbf{g})$ & Scaled partition function ($\chi^2$-source): $\frac{\rho_1}{\varepsilon}e^{\rho_1/\varepsilon}Z(x, \mathbf{g})$. \\
        $\mathcal{J}_{\text{trans}}(\mathbf{g})$ & Transport component of the semi-dual functional. \\
        \midrule
        \multicolumn{2}{c}{\textsc{Optimization \& Algorithms}} \\
        \midrule
        $\mathcal{K}_\delta$ & Effective domain (feasible set): $\{ \mathbf{g} \in \mathbb{R}^n \mid g_j \le \rho_2 + \delta \}$. \\
        $L(\mathbf{g})$ & Local smoothness upper bound at $\mathbf{g}$. \\
        $\widehat{\nabla}\mathcal{J}$ & Stochastic gradient estimator (batch size $m_b$). \\
        \bottomrule
    \end{tabular}
\end{table}

\newpage
\section{Derivation of the semi-dual functionals. }\label{appendix:semi_dual}

\textbf{Summary of the semi-dual objectives, gradients, and recovery of the dual potentials. } 
\label{prop:semi_dual_recap}
Assume $\nu=\sum_{j=1}^n \beta_j\delta_{y_j}$ with $\beta_j>0$ and fix the target marginal penalty
$D_2=D_{\chi^2}$ with weight $\rho_2>0$. Define, for $x\in\X$ and $g\in\R^n$,
\[
B_j(x,g):=\beta_j\exp\!\Big(\tfrac{g_j-c(x,y_j)}{\varepsilon}\Big),\qquad
Z(x,g):=\sum_{j=1}^n B_j(x,g),\qquad
w_j(x,g):=\frac{B_j(x,g)}{Z(x,g)}.
\]
Then any dual maximizer satisfies $g^\star_j\le \rho_2$ for all $j$, and the dual problem reduces to the minimization
of a strictly convex semi-dual functional $\J(g)$ of the form
\[
\J(g)=\J_{\trans}(g)\;+\;\sum_{j=1}^n \beta_j\Big(\tfrac{g_j^2}{2\rho_2}-g_j\Big),
\]
where the transport term $\J_{\trans}$ depends on the choice of the source divergence $D_1$:

\noindent \textbf{KL-source ($D_1=\KL$).} Let $\alpha:=\frac{\varepsilon}{\rho_1+\varepsilon}$. Then
\begin{equation}
\label{eq:recap_J_KL}
\J_{\trans}(g)=(\rho_1+\varepsilon)\int_{\X} Z(x,g)^\alpha\,\d\mu(x),
\end{equation}
and the gradient is
\begin{equation}
\label{eq:recap_grad_KL}
\nabla_k \J(g)=\int_{\X} Z(x,g)^\alpha\,w_k(x,g)\,\d\mu(x)\;+\;\beta_k\Big(\tfrac{g_k}{\rho_2}-1\Big).
\end{equation}
Moreover, for any fixed $g$, the maximizer in the eliminated variable $f$ is unique and given in closed form by
\begin{equation}
\label{eq:recap_fstar_KL}
f^\star(x;g)= -\frac{\rho_1\varepsilon}{\rho_1+\varepsilon}\ln Z(x,g),
\qquad x\in\X.
\end{equation}

\noindent \textbf{$\chi^2$-source ($D_1=D_{\chi^2}$).} Define
\[
U(x,g):=\frac{\rho_1}{\varepsilon}e^{\rho_1/\varepsilon}Z(x,g),
\qquad W(\cdot)\ \text{the Lambert function}.
\]
Then
\begin{equation}
\label{eq:recap_J_chi2}
\J_{\trans}(g)=\frac{\varepsilon^2}{\rho_1}\int_{\X}\Big(W(U(x,g))+\tfrac12 W(U(x,g))^2\Big)\,\d\mu(x),
\end{equation}
and the gradient is
\begin{equation}
\label{eq:recap_grad_chi2}
\nabla_k \J(g)=\int_{\X}\frac{\varepsilon}{\rho_1}W(U(x,g))\,w_k(x,g)\,\d\mu(x)\;+\;\beta_k\Big(\tfrac{g_k}{\rho_2}-1\Big).
\end{equation}
Moreover, for any fixed $g$, the maximizer in the eliminated variable $f$ is unique and given by
\begin{equation}
\label{eq:recap_fstar_chi2}
f^\star(x;g)=\rho_1-\varepsilon\,W(U(x,g)),
\qquad x\in\X.
\end{equation}

Finally, if $g^\star$ minimizes $\J$, then $(f^\star(\cdot;g^\star),g^\star)$ is a maximizer of the original dual
\eqref{eq:dual_start_app}. The associated optimal coupling is recovered from the dual potentials by
\[
\frac{\d\pi^\star}{\d(\mu\otimes\nu)}(x,y_j)=
\exp\!\left(\frac{f^\star(x;g^\star)+g^\star_j-c(x,y_j)}{\varepsilon}\right).
\]

\begin{proof}
We consider entropic unbalanced OT with discrete target
$\nu=\sum_{j=1}^n \beta_j \delta_{y_j}$, $\beta_j>0$.
Starting from the dual (up to additive constants independent of $(f,\mathbf g)$),
\begin{align}
\label{eq:dual_start_app}
\sup_{f,\mathbf g}\Bigg\{&
-\varepsilon \int_{\mathcal X}\sum_{j=1}^n \beta_j
\exp\!\Big(\tfrac{f(x)+g_j-c(x,y_j)}{\varepsilon}\Big)\d\mu(x)
-\rho_1 \int_{\mathcal X}\varphi_1^c\!\Big(-\tfrac{f(x)}{\rho_1}\Big)\d\mu(x)
-\rho_2 \sum_{j=1}^n \beta_j \,\varphi_{\chi^2}^c\!\Big(-\tfrac{g_j}{\rho_2}\Big)
\Bigg\}.
\end{align}

\paragraph{Pearson $\chi^2$ generator and conjugate.}
The Pearson $\chi^2$ divergence between measures $\pi \ll \nu$ is generated by
\[
\varphi_{\chi^2}(t)=\tfrac12 (t-1)^2,\qquad t\ge 0,
\]
so that $D_{\chi^2}(\pi\|\nu)=\int \varphi_{\chi^2}\!\big(\tfrac{d\pi}{d\nu}\big)\d\nu$.
Its convex conjugate over $t\ge 0$ is
\begin{equation}
\label{eq:chi2_conj_app}
\varphi_{\chi^2}^c(s)=\sup_{t\ge 0}\{st-\tfrac12(t-1)^2\}
=
\begin{cases}
s+\tfrac12 s^2, & s\ge -1,\\
-\tfrac12, & s<-1.
\end{cases}
\end{equation}

\paragraph{Softmax quantities.}
For any $x\in\mathcal X$ and $\mathbf g\in\mathbb R^n$, define
\begin{equation}
\label{eq:defs_BZw_app}
B_j(x,\mathbf g):=\beta_j\exp\!\Big(\tfrac{g_j-c(x,y_j)}{\varepsilon}\Big),\qquad
Z(x,\mathbf g):=\sum_{j=1}^n B_j(x,\mathbf g),\qquad
w_j(x,\mathbf g):=\frac{B_j(x,\mathbf g)}{Z(x,\mathbf g)}.
\end{equation}
Then
\[
\sum_{j=1}^n \beta_j \exp\!\Big(\tfrac{f(x)+g_j-c(x,y_j)}{\varepsilon}\Big)
=
e^{f(x)/\varepsilon} Z(x,\mathbf g).
\]

Let $s_j=-g_j/\rho_2$. Using \eqref{eq:chi2_conj_app},
\begin{equation}
\label{eq:target_term_piecewise_app}
-\rho_2\,\varphi_{\chi^2}^c(s_j)
=
\begin{cases}
g_j-\tfrac{g_j^2}{2\rho_2}, & g_j\le \rho_2 \ (s_j\ge -1),\\[2pt]
\tfrac{\rho_2}{2}, & g_j> \rho_2 \ (s_j<-1).
\end{cases}
\end{equation}
In our dual objective, the only other dependence on $g_j$ is through the entropic term
$-\varepsilon \int e^{f/\varepsilon} B_j(x,\mathbf g)\d\mu(x)$, which is strictly decreasing in $g_j$.
Therefore, increasing $g_j$ above $\rho_2$ can only decrease the objective, hence at any maximizer one has
\begin{equation}
\label{eq:gj_le_rho2_app}
g_j^\star\le \rho_2\quad\text{for all }j.
\end{equation}
As a consequence, we may use the quadratic branch in \eqref{eq:target_term_piecewise_app} and write
\begin{equation}
\label{eq:target_term_quad_app}
-\rho_2 \sum_{j=1}^n \beta_j\,\varphi_{\chi^2}^c\!\Big(-\tfrac{g_j}{\rho_2}\Big)
=
\sum_{j=1}^n \beta_j\Big(g_j-\tfrac{g_j^2}{2\rho_2}\Big).
\end{equation}

Plugging \eqref{eq:defs_BZw_app} and \eqref{eq:target_term_quad_app} into \eqref{eq:dual_start_app} yields
\begin{align}
\label{eq:dual_after_discrete_app}
\sup_{f,\mathbf g}\Bigg\{&
\int_{\mathcal X}\Big[
-\varepsilon\,e^{f(x)/\varepsilon}Z(x,\mathbf g)
-\rho_1\,\varphi_1^c\!\Big(-\tfrac{f(x)}{\rho_1}\Big)
\Big]\d\mu(x)
+\sum_{j=1}^n \beta_j\Big(g_j-\tfrac{g_j^2}{2\rho_2}\Big)
\Bigg\}.
\end{align}
For fixed $\mathbf g$, the maximization over $f$ is separable in $x$.

\paragraph{Case A: KL-source ($\varphi_1=\KL$).}
Here $\varphi_{\KL}^c(s)=e^s-1$, hence
\[
-\rho_1\,\varphi_{\KL}^c\!\Big(-\tfrac{f}{\rho_1}\Big)
=
-\rho_1\big(e^{-f/\rho_1}-1\big)
=
-\rho_1 e^{-f/\rho_1}+\rho_1.
\]
Fix $x$ and abbreviate $Z=Z(x,\mathbf g)$. We maximize over $f\in\mathbb R$:
\begin{equation}
\label{eq:phi_KL_pointwise_app}
\Phi_x(f)
:=
-\varepsilon Z e^{f/\varepsilon}-\rho_1 e^{-f/\rho_1}+\rho_1.
\end{equation}
The first-order condition gives
\[
0=\Phi_x'(f)=-Z e^{f/\varepsilon}+e^{-f/\rho_1}
\quad\Longleftrightarrow\quad
e^{-f/\rho_1}=Z e^{f/\varepsilon}.
\]
Let $\alpha:=\frac{\varepsilon}{\rho_1+\varepsilon}$. The unique maximizer is
\begin{equation}
\label{eq:fstar_KL_app}
f^\star(x;\mathbf g)= -\frac{\rho_1\varepsilon}{\rho_1+\varepsilon}\ln Z(x,\mathbf g)
\quad\text{and}\quad
e^{-f^\star/\rho_1}=Z^\alpha,\;\; e^{f^\star/\varepsilon}=Z^{\alpha-1}.
\end{equation}
Plugging into \eqref{eq:phi_KL_pointwise_app} yields the exact pointwise optimum value
\begin{equation}
\label{eq:val_KL_app}
\sup_f \Phi_x(f)
=
\rho_1-(\rho_1+\varepsilon)\,Z(x,\mathbf g)^\alpha.
\end{equation}
Therefore the dual reduces to
\begin{align}
\label{eq:dual_reduced_KL_app}
\sup_{\mathbf g}\Bigg\{&
\rho_1\,\mu(\mathcal X)
-(\rho_1+\varepsilon)\int_{\mathcal X} Z(x,\mathbf g)^\alpha\d\mu(x)
+\sum_{j=1}^n \beta_j\Big(g_j-\tfrac{g_j^2}{2\rho_2}\Big)
\Bigg\}.
\end{align}
Equivalently, maximizing \eqref{eq:dual_reduced_KL_app} is the same as minimizing the strictly convex objective
\begin{equation}
\label{eq:J_KL_app}
\mathcal J_{\KL}(\mathbf g)
:=
(\rho_1+\varepsilon)\int_{\mathcal X} Z(x,\mathbf g)^\alpha\d\mu(x)
+\sum_{j=1}^n \beta_j\Big(\tfrac{g_j^2}{2\rho_2}-g_j\Big),
\end{equation}
and the optimal dual value equals $\rho_1\mu(\mathcal X)-\inf_{\mathbf g}\mathcal J_{\KL}(\mathbf g)$.

\paragraph{Case B: $\chi^2$-source ($\varphi_1=\chi^2$).}
Using \eqref{eq:chi2_conj_app} with $s=-f/\rho_1$, the quadratic branch (valid when $f\le\rho_1$) gives
\[
-\rho_1\,\varphi_{\chi^2}^c\!\Big(-\tfrac{f}{\rho_1}\Big)
=
-\rho_1\Big(-\tfrac{f}{\rho_1}+\tfrac12\tfrac{f^2}{\rho_1^2}\Big)
=
f-\tfrac{f^2}{2\rho_1}.
\]
If $f>\rho_1$, the penalty becomes constant $-\rho_1(-\tfrac12)=\rho_1/2$ while the entropic term
$-\varepsilon Z e^{f/\varepsilon}$ strictly decreases with $f$, so at any maximizer we have $f^\star(x)\le\rho_1$.
Fix $x$ and write $Z=Z(x,\mathbf g)$. We maximize over $f\le\rho_1$:
\begin{equation}
\label{eq:phi_chi2_pointwise_app}
\Phi_x(f)
:=
-\varepsilon Z e^{f/\varepsilon}+f-\tfrac{f^2}{2\rho_1}.
\end{equation}
The first-order condition is
\begin{equation}
\label{eq:FOC_chi2_app}
0=\Phi_x'(f)=-Z e^{f/\varepsilon}+1-\tfrac{f}{\rho_1}
\quad\Longleftrightarrow\quad
1-\tfrac{f}{\rho_1}=Z e^{f/\varepsilon}.
\end{equation}
Let $u:=1-\tfrac{f}{\rho_1}\ge 0$. Then \eqref{eq:FOC_chi2_app} becomes
\[
u=Z\exp\!\Big(\tfrac{\rho_1}{\varepsilon}(1-u)\Big)
=Z e^{\rho_1/\varepsilon}\,e^{-(\rho_1/\varepsilon)u}.
\]
Equivalently,
\[
\Big(\tfrac{\rho_1}{\varepsilon}u\Big)\exp\!\Big(\tfrac{\rho_1}{\varepsilon}u\Big)
=
\frac{\rho_1}{\varepsilon}Z e^{\rho_1/\varepsilon}.
\]
Define
\begin{equation}
\label{eq:U_def_app}
U(x,\mathbf g):=\frac{\rho_1}{\varepsilon}Z(x,\mathbf g)e^{\rho_1/\varepsilon},
\qquad
W(\cdot)\ \text{the Lambert function}.
\end{equation}
Then the unique maximizer is
\begin{equation}
\label{eq:fstar_chi2_app}
u(x,\mathbf g)=\frac{\varepsilon}{\rho_1}W\!\big(U(x,\mathbf g)\big),
\qquad
f^\star(x;\mathbf g)=\rho_1-\varepsilon\,W\!\big(U(x,\mathbf g)\big)\le \rho_1.
\end{equation}
To evaluate the optimum value, note from \eqref{eq:FOC_chi2_app} that $Z e^{f^\star/\varepsilon}=u$.
Plugging $f^\star=\rho_1(1-u)$ into \eqref{eq:phi_chi2_pointwise_app} gives
\[
\sup_f \Phi_x(f)
=
-\varepsilon u+\rho_1(1-u)-\tfrac{\rho_1}{2}(1-u)^2
=
\tfrac{\rho_1}{2}-\varepsilon u-\tfrac{\rho_1}{2}u^2.
\]
Using \eqref{eq:fstar_chi2_app} with $u=\frac{\varepsilon}{\rho_1}w$ and $w:=W(U)$, we obtain the exact value
\begin{equation}
\label{eq:val_chi2_app}
\sup_f \Phi_x(f)
=
\frac{\rho_1}{2}
-\frac{\varepsilon^2}{\rho_1}\Big(w+\tfrac12 w^2\Big),
\qquad w=W\!\big(U(x,\mathbf g)\big).
\end{equation}
Therefore the dual reduces to
\begin{align}
\label{eq:dual_reduced_chi2_app}
\sup_{\mathbf g}\Bigg\{&
\frac{\rho_1}{2}\,\mu(\mathcal X)
-\frac{\varepsilon^2}{\rho_1}\int_{\mathcal X}\Big(W(U)+\tfrac12 W(U)^2\Big)\d\mu
+\sum_{j=1}^n \beta_j\Big(g_j-\tfrac{g_j^2}{2\rho_2}\Big)
\Bigg\}.
\end{align}
Equivalently, maximizing \eqref{eq:dual_reduced_chi2_app} is the same as minimizing the strictly convex objective
\begin{equation}
\label{eq:J_chi2_app}
\mathcal J_{\chi^2}(\mathbf g)
:=
\frac{\varepsilon^2}{\rho_1}\int_{\mathcal X}\Big(W(U(x,\mathbf g))+\tfrac12 W(U(x,\mathbf g))^2\Big)\d\mu(x)
+\sum_{j=1}^n \beta_j\Big(\tfrac{g_j^2}{2\rho_2}-g_j\Big),
\end{equation}
and the optimal dual value equals $\frac{\rho_1}{2}\mu(\mathcal X)-\inf_{\mathbf g}\mathcal J_{\chi^2}(\mathbf g)$.
\end{proof}
\subsection*{Gradients of the semi-dual objectives}
\label{app:gradients_semidual}

We now differentiate $\mathcal J(\mathbf g)$ in the two cases.
First note that, from \eqref{eq:defs_BZw_app},
\begin{equation}
\label{eq:dZ_dgk_app}
\frac{\partial Z(x,\mathbf g)}{\partial g_k}
=\frac{1}{\varepsilon}B_k(x,\mathbf g)
=\frac{1}{\varepsilon}Z(x,\mathbf g)\,w_k(x,\mathbf g).
\end{equation}
Also, the target quadratic term always contributes
\begin{equation}
\label{eq:target_grad_app}
\frac{\partial}{\partial g_k}\sum_{j=1}^n \beta_j\Big(\tfrac{g_j^2}{2\rho_2}-g_j\Big)
=\beta_k\Big(\tfrac{g_k}{\rho_2}-1\Big).
\end{equation}

\paragraph{Gradient, KL-source.}
From \eqref{eq:J_KL_app}, using \eqref{eq:dZ_dgk_app},
\[
\frac{\partial}{\partial g_k} Z(x,\mathbf g)^\alpha
=\alpha Z^{\alpha-1}\frac{\partial Z}{\partial g_k}
=\frac{\alpha}{\varepsilon}Z^\alpha w_k
=\frac{1}{\rho_1+\varepsilon}Z^\alpha w_k,
\]
hence
\begin{equation}
\label{eq:grad_KL_app}
\nabla_k \mathcal J_{\KL}(\mathbf g)
=
\int_{\mathcal X} Z(x,\mathbf g)^\alpha\,w_k(x,\mathbf g)\d\mu(x)
+\beta_k\Big(\tfrac{g_k}{\rho_2}-1\Big).
\end{equation}

\paragraph{Gradient, $\chi^2$-source.}
Let $w(x,\mathbf g):=W(U(x,\mathbf g))$ with $U$ defined in \eqref{eq:U_def_app}.
Since $U(x,\mathbf g)=\frac{\rho_1}{\varepsilon}e^{\rho_1/\varepsilon}Z(x,\mathbf g)$, we have
\[
\frac{\partial w}{\partial g_k}
= W'(U)\frac{\partial U}{\partial g_k}
= \frac{w}{U(1+w)}\cdot \frac{U}{Z}\cdot \frac{\partial Z}{\partial g_k}
= \frac{w}{Z(1+w)}\cdot \frac{1}{\varepsilon}B_k.
\]
Differentiate the integrand in \eqref{eq:J_chi2_app}:
\[
\frac{\partial}{\partial g_k}\Big(w+\tfrac12 w^2\Big)
=(1+w)\frac{\partial w}{\partial g_k}
=\frac{w}{Z}\cdot \frac{1}{\varepsilon}B_k
=\frac{w}{\varepsilon}\,w_k.
\]
Therefore
\begin{equation}
\label{eq:grad_chi2_app}
\nabla_k \mathcal J_{\chi^2}(\mathbf g)
=
\int_{\mathcal X}\frac{\varepsilon}{\rho_1}W\!\big(U(x,\mathbf g)\big)
\,w_k(x,\mathbf g)\d\mu(x)
+\beta_k\Big(\tfrac{g_k}{\rho_2}-1\Big).
\end{equation}

\paragraph{Summary in density form.}
For KL-source, define $\alpha=\frac{\varepsilon}{\rho_1+\varepsilon}$ and
\[
\sigma_{\KL}(x,\mathbf g):=Z(x,\mathbf g)^\alpha.
\]
For $\chi^2$-source, define $U$ as in \eqref{eq:U_def_app} and
\[
\sigma_{\chi^2}(x,\mathbf g):=\frac{\varepsilon}{\rho_1}W(U(x,\mathbf g)).
\]
Then \eqref{eq:grad_KL_app} and \eqref{eq:grad_chi2_app} can be written uniformly as
\[
\nabla_k \mathcal J(\mathbf g)
=
\int_{\mathcal X}\sigma(x,\mathbf g)\,w_k(x,\mathbf g)\d\mu(x)
+\frac{\beta_k}{\rho_2}g_k-\beta_k\ . 
\]

\section{Proof of Theorem \ref{thm:smoothness_unified}: Smoothness Bound via Gradient Transport}\label{app::proof_thm_smoothness_unified}

\textbf{Theorem \ref{thm:smoothness_unified} Smoothness Bound via Gradient Transport}.
For all $\mathbf{g} \in \mathbb{R}^n$, the operator norm of the Hessian satisfies:
\begin{equation}
    \|\nabla^2 \mathcal{J}(\mathbf{g})\|_{\mathrm{op}} \le \frac{1}{\varepsilon} \|\nabla \mathcal{J}_{\mathrm{trans}}(\mathbf{g})\|_\infty + \frac{\beta_{\max}}{\rho_2}.
\end{equation}

\begin{proof}
We derive the Hessian in both cases. A common ingredient is the derivative of the softmax weights
$w_k(x,\mathbf g)=B_k(x,\mathbf g)/Z(x,\mathbf g)$.
Using $\partial_{g_l}B_k=\frac{1}{\varepsilon}\delta_{kl}B_k$ and the quotient rule,
\begin{equation}
\label{eq:weight_deriv}
\frac{\partial w_k}{\partial g_l}
=\frac{1}{\varepsilon}w_k(\delta_{kl}-w_l).
\end{equation}

\paragraph{Hessian Derivation: KL-Source Case.}
In the KL-source case, the transport density is $\sigma(x,\mathbf g)=Z(x,\mathbf g)^\alpha$ with
$\alpha=\frac{\varepsilon}{\rho_1+\varepsilon}$.
First,
\[
\frac{\partial \sigma}{\partial g_l}
=\alpha Z^{\alpha-1}\frac{\partial Z}{\partial g_l}
=\alpha Z^{\alpha-1}\cdot \frac{1}{\varepsilon}Z w_l
=\frac{\alpha}{\varepsilon}\sigma w_l.
\]
Then, applying the product rule and \eqref{eq:weight_deriv},
\begin{align*}
\frac{\partial}{\partial g_l}(\sigma w_k)
&=\Big(\frac{\partial\sigma}{\partial g_l}\Big)w_k+\sigma\Big(\frac{\partial w_k}{\partial g_l}\Big) \\
&=\frac{\alpha}{\varepsilon}\sigma w_l w_k + \frac{\sigma}{\varepsilon}w_k(\delta_{kl}-w_l) \\
&=\frac{\sigma}{\varepsilon}\Big[w_k\delta_{kl}-(1-\alpha)w_k w_l\Big],
\end{align*}
where $1-\alpha>0$.

\paragraph{Hessian Derivation: $\chi^2$-Source Case.}
For the semi-dual with $\chi^2$-source obtained by eliminating $f$, the gradient of the transport
term takes the form
\[
\nabla_k \mathcal J_{\mathrm{trans}}(\mathbf g)
=\int_{\mathcal X}\sigma(x,\mathbf g)\,w_k(x,\mathbf g)\d\mu(x),
\qquad
\sigma(x,\mathbf g)=\frac{\varepsilon}{\rho_1}W(U(x,\mathbf g)),
\]
with
\[
U(x,\mathbf g)=\frac{\rho_1}{\varepsilon}Z(x,\mathbf g)e^{\rho_1/\varepsilon}.
\]
We compute $\partial_{g_l}\sigma$. Using $W'(z)=\frac{W(z)}{z(1+W(z))}$ and
$\partial_{g_l}U=\frac{1}{\varepsilon}Uw_l$ (since $U\propto Z$ and $\partial_{g_l}Z=\frac{1}{\varepsilon}Zw_l$),
\begin{align*}
\frac{\partial\sigma}{\partial g_l}
&=\frac{\varepsilon}{\rho_1}W'(U)\,\frac{\partial U}{\partial g_l}
=\frac{\varepsilon}{\rho_1}\cdot \frac{W(U)}{U(1+W(U))}\cdot \frac{1}{\varepsilon}U w_l \\
&=\frac{1}{\rho_1}\frac{W(U)}{1+W(U)}\,w_l
=\frac{\sigma}{\varepsilon(1+W(U))}\,w_l,
\end{align*}
where in the last equality we used $\sigma=\frac{\varepsilon}{\rho_1}W(U)$.

Applying the product rule and \eqref{eq:weight_deriv},
\begin{align*}
\frac{\partial}{\partial g_l}(\sigma w_k)
&=\Big(\frac{\partial\sigma}{\partial g_l}\Big)w_k+\sigma\Big(\frac{\partial w_k}{\partial g_l}\Big) \\
&=\frac{\sigma}{\varepsilon(1+W)}w_l w_k+\frac{\sigma}{\varepsilon}w_k(\delta_{kl}-w_l) \\
&=\frac{\sigma}{\varepsilon}\Big[w_k\delta_{kl}-\Big(1-\frac{1}{1+W(U)}\Big)w_k w_l\Big] \\
&=\frac{\sigma}{\varepsilon}\Big[w_k\delta_{kl}-c(x)\,w_k w_l\Big],
\end{align*}
with the nonnegative coefficient
\[
c(x):=\frac{W(U(x,\mathbf g))}{1+W(U(x,\mathbf g))}\in[0,1).
\]

\paragraph{Unified Spectral Bound.}
In both cases, the Hessian of the full objective $\mathcal J(\mathbf g)$
(transport term plus the target quadratic penalty $\sum_j \beta_j(\frac{g_j^2}{2\rho_2}-g_j)$)
can be written as
\[
\nabla^2 \mathcal J(\mathbf g)
=\int_{\mathcal X}\frac{\sigma(x,\mathbf g)}{\varepsilon}
\Big(\mathrm{diag}(\mathbf w)-c(x)\,\mathbf w\mathbf w^\top\Big)\d\mu(x)
+\frac{1}{\rho_2}\mathrm{diag}(\boldsymbol\beta),
\]
where $c(x)=1-\alpha$ in the KL-source case and $c(x)=\frac{W(U)}{1+W(U)}$ in the $\chi^2$-source case,
hence always $c(x)\ge 0$.

Since $\mathbf w\mathbf w^\top\succeq 0$, subtracting $c(x)\mathbf w\mathbf w^\top$ decreases eigenvalues:
\[
\mathrm{diag}(\mathbf w)-c(x)\mathbf w\mathbf w^\top \preceq \mathrm{diag}(\mathbf w).
\]
Therefore
\begin{align*}
\nabla^2 \mathcal J(\mathbf g)
&\preceq \int_{\mathcal X}\frac{\sigma(x,\mathbf g)}{\varepsilon}\mathrm{diag}(\mathbf w)\d\mu(x)
+\frac{1}{\rho_2}\mathrm{diag}(\boldsymbol\beta) \\
&=\mathrm{diag}\!\left(\frac{1}{\varepsilon}\int_{\mathcal X}\sigma(x,\mathbf g)\,\mathbf w(x,\mathbf g)\d\mu(x)
+\frac{\boldsymbol\beta}{\rho_2}\right).
\end{align*}
Recognizing $\int \sigma w_k\d\mu=[\nabla \mathcal J_{\mathrm{trans}}(\mathbf g)]_k$, we obtain
\[
\|\nabla^2\mathcal J(\mathbf g)\|_{\mathrm{op}}
\le \max_k\left(\frac{1}{\varepsilon}[\nabla \mathcal J_{\mathrm{trans}}(\mathbf g)]_k+\frac{\beta_k}{\rho_2}\right)
\le \frac{1}{\varepsilon}\|\nabla \mathcal J_{\mathrm{trans}}(\mathbf g)\|_\infty+\frac{\beta_{\max}}{\rho_2},
\]
which concludes the proof.
\end{proof}

\section{Proof of Lemma \ref{lem:global_grad_bound}: Uniform Gradient Bound and Smoothness}\label{app::proof:lem:global_grad_bound}

\textbf{Lemma \ref{lem:global_grad_bound}: Uniform Gradient Bound and Smoothness. }
  On $\mathcal{K}$, the $L_1$-norm of the transport gradient is uniformly bounded:  $\| \nabla \mathcal{J}_{\mathrm{trans}}(\mathbf{g}) \|_1 \le C_{\mathrm{bound}}$, where:
\begin{align}
    C_{\mathrm{bound}}^{\KL} &:=
    \mu(\mathcal{X}) \|\nu\|_1^\alpha \exp\left( \frac{\rho_2 + \delta}{\rho_1 + \varepsilon} \right) \\
    C_{\mathrm{bound}}^{\chi^2}
    &:=\mu(\mathcal X)\frac{\varepsilon}{\rho_1}W\left[\frac{\rho_1}{\varepsilon}e^{\rho_1/\varepsilon}\|\nu\|_1\exp\!\Big(\frac{\rho_2+\delta}{\varepsilon}\Big)\right],
\end{align}

Consequently, the Hessian is bounded on $\mathcal{K}$, and $\mathcal{J}$ is $L$-smooth with $L = \mathcal{O}(1/\varepsilon)$.

\begin{proof}
    In both cases of source divergence, our gradient writes
    \[
    [\nabla \mathcal{J}_{\mathrm{trans}}(\mathbf{g})]_k = \int_{\mathcal{X}} \sigma(x, \mathbf{g}) \, w_k(x, \mathbf{g}) \, d\mu(x)\ , 
    \]
    with $ \sigma_{\KL}(x,\mathbf g):=Z(x,\mathbf g)^\alpha$ or $\sigma_{\chi^2}(x,\mathbf g):=\frac{\varepsilon}{\rho_1}W(U(x,\mathbf g))$ . 
    
    Crucially, we have  $\sum_{k=1}^n w_k(x, \mathbf{g}) = 1$ for any $x$. Because of this property, determining the $L_1$ norm, which is simply the sum of these non-negative components, simplifies easily:
    \[
    \|\nabla \mathcal{J}_{\mathrm{trans}}(\mathbf{g})\|_1 = \sum_{k=1}^n \int_{\mathcal{X}} \sigma(x, \mathbf{g}) \, w_k(x, \mathbf{g}) \, d\mu(x) = \int_{\mathcal{X}} \sigma(x, \mathbf{g}) \underbrace{\left( \sum_{k=1}^n w_k(x, \mathbf{g}) \right)}_{=1} \, d\mu(x).
    \]
    Thus, the problem reduces to finding a uniform bound for the integral of the scalar density $\sigma(x, \mathbf{g})$.
    
    \textbf{Bounding $Z$.}
    The behavior of $\sigma(x, \mathbf{g})$ in both cases is driven by the potential function $Z(x, \mathbf{g}) = \sum_{j} B_j(x, \mathbf{g})$. We recall that:
    \[
    B_j(x, \mathbf{g}) = \beta_j \exp\left( \frac{g_j - c(x,y_j)}{\varepsilon} \right).
    \]
    We can bound this term uniformly by utilizing the problem constraints. Since the cost is non-negative ($c \ge 0$) and the algorithm enforces $g_j \le \rho_2 + \delta $, we have:
    \[
    Z(x, \mathbf{g}) \le \left(\sum_{j=1}^n \beta_j\right) \exp\left( \frac{\rho_2}{\varepsilon} \right) = \|\nu\|_1 \exp\left( \frac{\rho_2 + \delta }{\varepsilon} \right).
    \]
    Let's denote this upper bound constant as $Z_{\max}$.
    
    With $Z$ bounded, we can now bound the total mass $\int \sigma \, d\mu$ for each geometry to conclude:

 \textbf{Case KL:} Here, the density is defined as $\sigma(x, \mathbf{g}) = Z(x, \mathbf{g})^\alpha$. Since $Z(x, \mathbf{g}) \le Z_{\max}$, the gradient norm is directly bounded by:
        \[
        \|\nabla \mathcal{J}_{\mathrm{trans}}\|_1 \le \int_{\mathcal{X}} Z_{\max}^\alpha \, d\mu(x) = \mu(\mathcal{X}) \|\nu\|_1^\alpha \exp\left( \frac{\alpha (\rho_2+ \delta)}{\varepsilon} \right).
        \]
        Substituting $\alpha = \frac{\varepsilon}{\rho_1 + \varepsilon}$ yields the final bound $\mu(\mathcal{X}) \|\nu\|_1^\alpha \exp\left( \frac{\rho_2 + \delta }{\rho_1 + \varepsilon} \right)$.

 \textbf{Case $\chi^2$:}For the $\chi^2$-source semi-dual obtained by eliminating $f$, the transport-gradient density is
\[
\sigma(x,\mathbf g)=\frac{\varepsilon}{\rho_1}W(U(x,\mathbf g)),
\qquad
U(x,\mathbf g)=\frac{\rho_1}{\varepsilon}Z(x,\mathbf g)e^{\rho_1/\varepsilon}.
\]
Since $W$ is increasing on $\mathbb R_+$ and $Z\le Z_{\max}$, we have $U(x,\mathbf g)\le U_{\max}$ with
\[
U_{\max}:=\frac{\rho_1}{\varepsilon}Z_{\max}e^{\rho_1/\varepsilon}
=\frac{\rho_1}{\varepsilon}\,\|\nu\|_1\,\exp\!\Big(\tfrac{\rho_2+\delta}{\varepsilon}\Big).
\]
Therefore,
\[
\|\nabla \mathcal{J}_{\mathrm{trans}}(\mathbf{g})\|_1
=\int_{\mathcal X}\frac{\varepsilon}{\rho_1}W(U(x,\mathbf g))\d\mu(x)
\le \mu(\mathcal X)\,\frac{\varepsilon}{\rho_1}\,W(U_{\max}),
\]
which is a uniform bound under the constraint $g_j\le \rho_2 + \delta$. Moreover, one can check numerically that this constant is close to $1$. 
\end{proof}

\subsection{Corollary - Proof of Lemma \ref{lemma::variance} : Bounded Variance}\label{app::proof_lemma::variance}

\textbf{Proposition \ref{lemma::variance} :Variance Bound of Mini-Batch Gradient.} 
Let $\widehat{\nabla} \mathcal{J}(\mathbf{g})$ be the mini-batch gradient estimator computed with batch size $b\geq 1$, as defined in Eq. \eqref{eq:stoch_grad_estimator}.
For any $\mathbf{g} \in \mathcal{K}$, using the uniform bound $C_{\mathrm{bound}}$ from Lemma \ref{lem:global_grad_bound}, the variance is bounded by:
\begin{equation}
    \mathbb{E}\left[ \| \widehat{\nabla} \mathcal{J}(\mathbf{g}) - \nabla \mathcal{J}(\mathbf{g}) \|_2^2 \right] 
    \le 
    \frac{4 C_{\mathrm{bound}}^2}{b}\ . 
\end{equation}

\begin{proof}
From Lemma \ref{lem:global_grad_bound}, we have the uniform $L_1$-norm bound on the estimation error for any sample realization:
\[
    \| \widehat{\nabla} \mathcal{J}(\mathbf{g}) - \nabla \mathcal{J}(\mathbf{g}) \|_1 \le 2 C_{\mathrm{bound}}.
\]
Using the norm inequality $\|\cdot\|_2 \le \|\cdot\|_1$, the squared Euclidean error for a single sample ($b=1$) is bounded almost surely by $(2 C_{\mathrm{bound}})^2$. 
Since $\widehat{\nabla} \mathcal{J}(\mathbf{g})$ is the average of $b$ i.i.d. estimators, the variance of the mean scales by $1/b$:
\[
    \mathbb{E}\left[ \| \widehat{\nabla} \mathcal{J}(\mathbf{g}) - \nabla \mathcal{J}(\mathbf{g}) \|_2^2 \right] 
    = \frac{1}{b} \text{Var}(\widehat{\nabla}_{\text{single}}) 
    \le \frac{(2 C_{\mathrm{bound}})^2}{b}.
\]
\end{proof}

\section{Proof of Proposition \ref{prop::self_concordance}: Generalized self-concordance of the semi-dual}\label{app::proof_self_conc}

\textbf{Proposition \ref{prop::self_concordance} : Generalized self-concordance. }The semi-dual $\J$ is generalized self-concordant. That is, for $M = \frac{2 + 3\alpha}{\varepsilon}$ for $\KL$ source, and $M = \frac{6}{\varepsilon}$ for $\chi^2$ , we have for any $\bfg \in \mathbb{R}^n$ and any direction $\bfh \in \mathbb{R}^n$:
    \[
        \big| \nabla^3 \J(\bfg)[\bfh, \bfh, \bfh] \big| \le M \|\bfh\|_\infty \, \langle \bfh, \nabla^2 \J(\bfg) \bfh \rangle.
    \]

\subsection{Case 1: \texorpdfstring{$D_1 = \mathrm{KL},\ D_2 = \chi^2$}{D₁ = KL, D₂ = χ²}}

\begin{proof}
For clarity, we recall the notations
\[
B_j(\bfg):=\beta_j\exp\!\left(\frac{g_j-c(x,y_j)}{\varepsilon}\right),\qquad
Z(\bfg):=\sum_{j=1}^n B_j(\bfg),\qquad
\tau(\bfg):=Z(\bfg)^\alpha,
\]
with $\beta_j>0$. The softmax weights are 
\[
w_j(\bfg):=\frac{B_j(\bfg)}{Z(\bfg)}\in\Delta^n.
\]

We now introduce some notions regarding directional derivatives: 

For a direction $\bfh\in\mathbb R^n$, denote directional derivatives by $\partial_{\bfh}$ and define
\[
w_{\bfh}:=\langle w,\bfh\rangle,\qquad
w_{\bfh^2}:=\langle w,\bfh^2\rangle,\qquad
w_{\bfh^3}:=\langle w,\bfh^3\rangle,
\]
where $\bfh^2=(h_1^2,\dots,h_n^2)$ and $\bfh^3=(h_1^3,\dots,h_n^3)$. Let $L:=\|\bfh\|_\infty$.

We start by giving the directional derivatives of $\tau$. 

\textbf{Derivatives of $Z$: } Consider $\bfg(t)=\bfg+t\bfh$. Then
\[
Z(t)=\sum_{j=1}^n B_j(\bfg)\exp\!\left(\frac{t h_j}{\varepsilon}\right).
\]
Differentiating at $t=0$ yields
\[
\partial_{\bfh} Z
= Z'(0)=\frac{1}{\varepsilon}\sum_j B_j(\bfg)h_j
=\frac{1}{\varepsilon}Z(\bfg)\,w_{\bfh},
\]
\[
\partial_{\bfh}^2 Z
= Z''(0)=\frac{1}{\varepsilon^2}\sum_j B_j(\bfg)h_j^2
=\frac{1}{\varepsilon^2}Z(\bfg)\,w_{\bfh^2},
\]
\[
\partial_{\bfh}^3 Z
= Z'''(0)=\frac{1}{\varepsilon^3}\sum_j B_j(\bfg)h_j^3
=\frac{1}{\varepsilon^3}Z(\bfg)\,w_{\bfh^3}.
\]

\textbf{Derivatives of $\tau = Z^\alpha$:} Using the chain rule for $\tau(t)=Z(t)^\alpha$,
\[
\tau'=\alpha Z^{\alpha-1}Z',\qquad
\tau''=\alpha(\alpha-1)Z^{\alpha-2}(Z')^2+\alpha Z^{\alpha-1}Z'',
\]
\[
\tau'''=\alpha(\alpha-1)(\alpha-2)Z^{\alpha-3}(Z')^3
+3\alpha(\alpha-1)Z^{\alpha-2}Z'Z''
+\alpha Z^{\alpha-1}Z'''.
\]
Substituting the expressions from the derivatives of $Z$, and writing $\tau=Z^\alpha$, we get:
\[
\partial_{\bfh}\tau
=\frac{\alpha}{\varepsilon}\tau\,w_{\bfh},
\]
\[
\partial_{\bfh}^2\tau
=\frac{\alpha}{\varepsilon^2}\tau\Big(w_{\bfh^2}-(1-\alpha)w_{\bfh}^2\Big).
\]
Define
\[
D:=w_{\bfh^2}-(1-\alpha)w_{\bfh}^2.
\]
Then
\[
\partial_{\bfh}^2\tau=\frac{\alpha}{\varepsilon^2}\tau D.
\]
Moreover, $D\ge 0$ since
\[
D=(w_{\bfh^2}-w_{\bfh}^2)+\alpha w_{\bfh}^2
=\mathrm{Var}_w(H)+\alpha(\mathbb E_w H)^2\ge 0,
\]
where $H$ is the random variable taking values $h_j$ with probabilities $w_j$.

A direct substitution into the third-derivative formula also gives
\[
\partial_{\bfh}^3\tau
=\frac{\alpha}{\varepsilon^3}\tau\Big(
w_{\bfh^3}+3(\alpha-1)w_{\bfh}w_{\bfh^2}+(\alpha-1)(\alpha-2)w_{\bfh}^3
\Big).
\]

\textbf{Central-moment rewrite:} As in \cite{bercu2021asymptotic}, we will substitute moments into  the expansion of the third-derivative.  

Let $H$ be the random variable with $\mathbb P(H=h_j)=w_j$. Define
\[
m:=\mathbb E[H]=w_{\bfh},\qquad
\sigma^2:=\mathrm{Var}(H)=w_{\bfh^2}-w_{\bfh}^2,\qquad
\kappa_3:=\mathbb E[(H-m)^3].
\]
Then $\mathbb E[H^2]=\sigma^2+\mu^2$ and $\mathbb E[H^3]=\kappa_3+3\mu\sigma^2+m^3$.
With these identities, one checks that the bracket in $\partial_{\bfh}^3\tau$ equals
\[
N:=\kappa_3+3\alpha m\sigma^2+\alpha^2m^3.
\]
Hence
\[
\partial_{\bfh}^3\tau=\frac{\alpha}{\varepsilon^3}\tau\,N,
\qquad
D=\sigma^2+\alpha m^2.
\]

\textbf{Bounding $|N|$ by $D$:} Let $L_h =\|\bfh\|_\infty$. Since $|H|\le L_h$ almost surely, we have $|m|\le L_h$ and also
$|H-m|\le |H|+|m|\le 2L_h$ almost surely. Therefore,
\[
|\kappa_3|
=\big|\mathbb E[(H-m)^3]\big|
\le \mathbb E[|H-m|^3]
\le 2L\,\mathbb E[(H-m)^2]
=2L_h\,\sigma^2.
\]
Using $|m|\le L_h$ and $\alpha\in(0,1]$:
\begin{align*}
|N|
&\le |\kappa_3|+3\alpha|m|\sigma^2+\alpha^2|m|^3 \\
&\le 2L_h\sigma^2+3\alpha L_h\sigma^2+\alpha^2L_h\mu^2 \\
&\le L_h\Big((2+3\alpha)\sigma^2+\alpha m^2\Big)\\
&\le (2+3\alpha)L_h(\sigma^2+\alpha m^2) \\
&=(2+3\alpha)\|\bfh\|_\infty D.
\end{align*}

Combining $\partial_{\bfh}^2\tau=\frac{\alpha}{\varepsilon^2}\tau D$ and
$\partial_{\bfh}^3\tau=\frac{\alpha}{\varepsilon^3}\tau N$ with the bound $|N|\le (2+3\alpha)\|\bfh\|_\infty D$,
we obtain
\[
|\partial_{\bfh}^3\tau|
=\frac{\alpha}{\varepsilon^3}\tau|N|
\le \frac{\alpha}{\varepsilon^3}\tau\,(2+3\alpha)\|\bfh\|_\infty D
=\frac{2+3\alpha}{\varepsilon}\,\|\bfh\|_\infty\,\partial_{\bfh}^2\tau.
\]
This proves that $\tau$ is quasi-self-concordant with parameter
\[
M=\frac{2+3\alpha}{\varepsilon}.
\]

By differentiation under the integral, from the boundedness of the measure $\mu$, integrating the pointwise inequality yields
\[
|\partial_{\bfh}^3\mathcal J(\bfg)|
\le \frac{2+3\alpha}{\varepsilon}\,\|\bfh\|_\infty\,\partial_{\bfh}^2\mathcal J(\bfg).
\]
\end{proof}

\subsection{Case 2: \texorpdfstring{$D_1 = D_2 = \chi^2$}{D₁ = D₂ = χ²}}

\begin{proof}
We keep the same notations as in Case 1:
\[
B_j(\bfg)=\beta_j\exp\!\left(\frac{g_j-c(x,y_j)}{\varepsilon}\right),\qquad
Z(\bfg)=\sum_{j=1}^n B_j(\bfg),\qquad
w_j(\bfg)=\frac{B_j(\bfg)}{Z(\bfg)}\in\Delta^n,
\]
and for a direction $\bfh\in\mathbb R^n$ we denote
\[
m:=w_{\bfh}=\langle w,\bfh\rangle,\qquad
w_{\bfh^2}=\langle w,\bfh^2\rangle,\qquad
\sigma^2:=w_{\bfh^2}-m^2,\qquad
\kappa_3:=\E[(H-m)^3],\qquad
L:=\|\bfh\|_\infty,
\]
where $H$ takes values $h_j$ with probabilities $w_j$. Along $\bfg(t)=\bfg+t\bfh$, the directional derivatives of $Z$
are unchanged:
\[
\partial_{\bfh} Z=\frac{1}{\varepsilon}Z\,m,\qquad
\partial_{\bfh}^2 Z=\frac{1}{\varepsilon^2}Z\,w_{\bfh^2},\qquad
\partial_{\bfh}^3 Z=\frac{1}{\varepsilon^3}Z\,w_{\bfh^3}.
\]
We also reuse the same probabilistic bound as in Case 1: since $|H|\le L$ a.s., one has
\begin{equation}\label{eq:kappa3-bound-trans}
|\kappa_3|\le \E[|H-m|^3]\le 2L\,\E[(H-m)^2]=2L\,\sigma^2.
\end{equation}

\medskip
\noindent\textbf{Specific changes.}
For the $\chi^2$-source semi-dual obtained by eliminating $f$, the \emph{transport} integrand is
\[
\tau_{\chi^2}(\bfg)
:=\frac{\varepsilon^2}{\rho_1}\Big(W(U(\bfg))+\tfrac12 W(U(\bfg))^2\Big),
\qquad
U(\bfg):=\frac{\rho_1}{\varepsilon}e^{\rho_1/\varepsilon}Z(\bfg).
\]
Equivalently, $\tau_{\chi^2}=\psi(Z)$ with
\[
\psi(z):=\frac{\varepsilon^2}{\rho_1}\Big(W(az)+\tfrac12 W(az)^2\Big),\qquad
a:=\frac{\rho_1}{\varepsilon}e^{\rho_1/\varepsilon}.
\]
Write $\omega:=W(az)\ge 0$ (so $\omega e^\omega=az$). Using $W'(u)=\frac{W(u)}{u(1+W(u))}$ and the identity
$u=az$, one obtains the explicit derivatives
\begin{equation}\label{eq:psi-derivatives-new}
\psi'(z)=\frac{\varepsilon^2}{\rho_1}\frac{\omega}{z},\qquad
\psi''(z)=-\frac{\varepsilon^2}{\rho_1}\frac{\omega^2}{z^2(1+\omega)},\qquad
\psi'''(z)=\frac{\varepsilon^2}{\rho_1}\frac{\omega^3(3+2\omega)}{z^3(1+\omega)^3}.
\end{equation}

\medskip
\noindent\textbf{Second derivative along $\bfh$.}
By the one-dimensional chain rule,
\[
\partial_{\bfh}^2\tau_{\chi^2}
=\psi''(Z)(\partial_{\bfh}Z)^2+\psi'(Z)\partial_{\bfh}^2Z.
\]
Substituting \eqref{eq:psi-derivatives-new} and the derivatives of $Z$ yields
\begin{align}
\label{eq:tau2-new}
\partial_{\bfh}^2\tau_{\chi^2}
&=
\left(-\frac{\varepsilon^2}{\rho_1}\frac{\omega^2}{Z^2(1+\omega)}\right)\left(\frac{Z^2 m^2}{\varepsilon^2}\right)
+\left(\frac{\varepsilon^2}{\rho_1}\frac{\omega}{Z}\right)\left(\frac{Z w_{\bfh^2}}{\varepsilon^2}\right)\nonumber\\
&=\frac{1}{\rho_1}\left(\omega w_{\bfh^2}-\frac{\omega^2}{1+\omega}m^2\right)
=\frac{\omega}{\rho_1}\left(\sigma^2+\frac{m^2}{1+\omega}\right)\ \ge 0.
\end{align}

\medskip
\noindent\textbf{Third derivative along $\bfh$ and central-moment simplification.}
Similarly,
\[
\partial_{\bfh}^3\tau_{\chi^2}
=\psi'''(Z)(\partial_{\bfh}Z)^3+3\psi''(Z)\partial_{\bfh}Z\,\partial_{\bfh}^2Z+\psi'(Z)\partial_{\bfh}^3Z.
\]
Substituting \eqref{eq:psi-derivatives-new} and the derivatives of $Z$ gives
\[
\partial_{\bfh}^3\tau_{\chi^2}
=\frac{1}{\rho_1\varepsilon}\left[
\omega w_{\bfh^3}
-\frac{3\omega^2}{1+\omega}m w_{\bfh^2}
+\frac{\omega^3(3+2\omega)}{(1+\omega)^3}m^3
\right].
\]
Using $w_{\bfh^2}=\sigma^2+m^2$ and $w_{\bfh^3}=\kappa_3+3m\sigma^2+m^3$, the cubic terms in $m^3$
yield the identity
\begin{equation}
\label{eq:tau3-new}
\partial_{\bfh}^3\tau_{\chi^2}
=\frac{\omega}{\rho_1\varepsilon}\left[
\kappa_3+\frac{3m}{1+\omega}\sigma^2 + \frac{m^3}{(1 + \omega)^3}
\right].
\end{equation}

\medskip
\noindent\textbf{Generalized self-concordance bound.}
Using \eqref{eq:kappa3-bound-trans}, $|m|\le L$, and $(1+\omega)^{-1}\le 1$, we get
\[
\left|\kappa_3+\frac{3m}{1+\omega}\sigma^2\right|
\le |\kappa_3|+3|m|\sigma^2
\le 2L\sigma^2+3L\sigma^2
=5L\sigma^2.
\]
Moreover, since $(1+\omega)^{-3}\le (1+\omega)^{-1}$ and $|m|\le L$, we also have
\[
\left|\frac{m^3}{(1+\omega)^3}\right|
\le \frac{|m|}{1+\omega}\,m^2
\le L\,\frac{m^2}{1+\omega}.
\]
Combining these two bounds with \eqref{eq:tau3-new} yields
\[
|\partial_{\bfh}^3\tau_{\chi^2}|
\le \frac{\omega}{\rho_1\varepsilon}\,L\left(5\sigma^2+\frac{m^2}{1+\omega}\right).
\]
Since $5\sigma^2+\frac{m^2}{1+\omega}\le 6\left(\sigma^2+\frac{m^2}{1+\omega}\right)$ and using \eqref{eq:tau2-new},
\[
\partial_{\bfh}^2\tau_{\chi^2}
=\frac{\omega}{\rho_1}\left(\sigma^2+\frac{m^2}{1+\omega}\right),
\]
we obtain the pointwise inequality
\[
|\partial_{\bfh}^3\tau_{\chi^2}|
\le \frac{6}{\varepsilon}\,L\,\partial_{\bfh}^2\tau_{\chi^2}.
\]
Therefore, $\tau_{\chi^2}$ is quasi-self-concordant with parameter $M=6/\varepsilon$.
By differentiation under the integral, the same bound transfers to the transport functional
$\mathcal J_{\mathrm{trans}}(\bfg)=\int_{\mathcal X}\tau_{\chi^2}(x,\bfg)\d\mu(x)$:
\[
|\partial_{\bfh}^3\mathcal J_{\mathrm{trans}}(\bfg)|
\le \frac{6}{\varepsilon}\,\|\bfh\|_\infty\,\partial_{\bfh}^2\mathcal J_{\mathrm{trans}}(\bfg).
\]

\end{proof}

As a corollary of generalized self-concordance, we have enhanced control over the Hessian; see, for instance, Proposition 8 in \cite{sun2019generalized}. However, here, this is with respect to the infinity norm instead of the Euclidean norm.

\bigskip
\begin{corollary}\label{cor::self_conc_hessian}
    Noting $M = \frac{2+3\alpha}{\varepsilon}$ for the $\KL$ source case,  $M = \frac{6}{\varepsilon}$ else, we have for any $\bfg_, \bfg_2 \in \RR^n :$ 
    $$e^{-M \|\bfg_2 - \bfg_1\|_{\infty}} \nabla^2 \J(\bfg_1) \preceq \nabla^2 \J (\bfg_2) \preceq e^{M \|\bfg_2 - \bfg_1\|_{\infty}} \nabla^2 \J(\bfg_1)$$
\end{corollary}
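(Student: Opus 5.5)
The plan is the standard one-dimensional reduction used for generalized self-concordant functions (as in Proposition 8 of \cite{sun2019generalized}), carried out with the $\infty$-norm version of Proposition~\ref{prop::self_concordance}. Fix $\bfg_1,\bfg_2\in\RR^n$ and set $\bfh:=\bfg_2-\bfg_1$ and $\bfg(t):=\bfg_1+t\bfh$ for $t\in[0,1]$. Fix also an arbitrary test direction $\mathbf u\in\RR^n$, and define
\[
\phi(t):=\langle \mathbf u,\nabla^2\J(\bfg(t))\mathbf u\rangle .
\]
It suffices to prove $e^{-M\|\bfh\|_\infty}\phi(0)\le\phi(1)\le e^{M\|\bfh\|_\infty}\phi(0)$ for every $\mathbf u$. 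This is exactly the Loewner ordering claimed in the statement.

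\textbf{Step 1: control the mixed third derivative.} We have $\phi'(t)=\nabla^3\J(\bfg(t))[\bfh,\mathbf u,\mathbf u]$, so we need
\[
|\nabla^3\J(\bfg)[\bfh,\mathbf u,\mathbf u]|\le M\|\bfh\|_\infty\langle\mathbf u,\nabla^2\J(\bfg)\mathbf u\rangle .
\]
Proposition~\ref{prop::self_concordance} only gives the diagonal version, with all three arguments equal to $\bfh$. This upgrade is the main obstacle.

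The quadratic target term has zero third derivative and nonnegative Hessian contribution, so it only helps and can be dropped from the third-derivative side. It then suffices to work pointwise in $x$ on the transport integrand $\tau$, which is a scalar function of $Z$.

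I would redo the computation of Appendix~\ref{app::proof_self_conc} with mixed directions, still using the random variable $H$ with $\PP(H=h_j)=w_j$, and in addition $V$ with $V=u_j$ on the same event.
\begin{itemize}
\item The second derivative becomes $\partial^2_{\mathbf u}\tau\propto \mathrm{Var}(V)+\alpha(\EE V)^2$ in the KL case. The $\chi^2$ case is analogous, with $1/(1+\omega)$ in place of $\alpha$.
\item The mixed third derivative becomes
\[
\partial_{\bfh}\partial^2_{\mathbf u}\tau\propto\frac{1}{\varepsilon}\Bigl(\EE[(H-m_H)(V-m_V)^2]+c_1\,m_H\mathrm{Var}(V)+c_2\,m_V\mathrm{Cov}(H,V)+c_3\,m_Hm_V^2\Bigr),
\]
with coefficients $c_i$ bounded in terms of $\alpha$ (respectively $\omega$).
\end{itemize}

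Each term is then bounded by $\|\bfh\|_\infty$ times the second-derivative quantity:
\begin{itemize}
\item $|H-m_H|\le2\|\bfh\|_\infty$ and $|m_H|\le\|\bfh\|_\infty$, which handle the first, second and fourth terms.
\item $|\mathrm{Cov}(H,V)|\le 2\|\bfh\|_\infty\,\EE|V-m_V|$, which handles the third term.
\item For the cross term $m_V\,\EE|V-m_V|$, use AM--GM against $\mathrm{Var}(V)+\alpha m_V^2$.
\end{itemize}
If the constant produced this way exceeds $M$, I would fall back to the constant of the paper's form, possibly enlarged. Alternatively, I would argue as in \cite{sun2019generalized} that the diagonal inequality for a convex function implies the mixed one with the same $M$, via the standard trilinear-form polarization for self-concordant-type bounds. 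The corollary's form is insensitive to the exact constant in the exponent up to this choice.

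\textbf{Step 2: Gr\"onwall argument.} Step 1 gives $|\phi'(t)|\le M\|\bfh\|_\infty\phi(t)$ with $\phi\ge0$. Suppose first that $\phi(t)>0$ on $[0,1]$. Then $|(\ln\phi)'|\le M\|\bfh\|_\infty$, and integrating over $[0,1]$ gives both inequalities.

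If $\phi$ vanishes somewhere, the linear differential inequality forces $\phi\equiv0$ by uniqueness (Gr\"onwall). The claim then holds trivially.

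In fact $\nabla^2\J\succeq(\beta_{\min}/\rho_2)I$, so $\phi(t)>0$ for all $\mathbf u\neq0$ whenever the quadratic term is kept. Since $\mathbf u$ is arbitrary, this concludes the proof.
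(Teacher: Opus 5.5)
Your reduction to $\phi(t)=\langle\mathbf u,\nabla^2\J(\bfg(t))\mathbf u\rangle$ and the Gr\"onwall step are fine. This is the argument behind Proposition~8 of \cite{sun2019generalized}, which is all the paper invokes for this corollary. You are also right that Proposition~\ref{prop::self_concordance} only controls $\nabla^3\J[\bfh,\bfh,\bfh]$, whereas \cite{sun2019generalized} state generalized self-concordance directly in the mixed form, which is what Proposition~8 there needs. The gap is that you never establish the mixed bound with the stated $M$, and both escape routes you offer fail.

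\textbf{The polarization fallback is false here.} Polarization works for classical self-concordance because $2\langle\bfh,\nabla^2 f\,\bfh\rangle^{3/2}$ is symmetric in the three slots; $M\|\bfh\|_\infty\langle\bfh,B\bfh\rangle$ is not. On $\RR^2$ take $B=\mathrm{diag}(1,\delta)$ with $\delta\le 1/4$, and the symmetric trilinear form with $A[\mathbf v,\mathbf v,\mathbf v]=a v_1v_2^2$, $a=2M\sqrt\delta$.
\begin{itemize}
\item The diagonal bound $|A[\mathbf v,\mathbf v,\mathbf v]|\le M\|\mathbf v\|_\infty\langle\mathbf v,B\mathbf v\rangle$ holds for all $\mathbf v$: use $t^2+\delta\ge 2t\sqrt\delta$ when $|v_1|\le|v_2|$, and $a\le M$ otherwise.
\item The mixed bound fails: $A[e_1,e_2,e_2]=a/3=\tfrac23M\sqrt\delta$, while $M\|e_1\|_\infty\langle e_2,Be_2\rangle=M\delta$. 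The ratio $\tfrac{2}{3\sqrt\delta}$ is unbounded as $\delta\to0$, so no constant rescues the implication.
\end{itemize}

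\textbf{The constant is not cosmetic.} Put $\bfg_2=\bfg_1+t\bfh$ in the corollary and let $t\downarrow0$. This returns exactly $|\nabla^3\J[\bfh,\mathbf u,\mathbf u]|\le M\|\bfh\|_\infty\langle\mathbf u,\nabla^2\J\mathbf u\rangle$. So the corollary with a given $M$ is equivalent to the mixed bound with that $M$, and an enlarged constant proves a different statement.

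\textbf{Your direct computation does close with exactly the stated constants.} Write $m_H=\EE H$, $m_V=\EE V$, $\tilde H=H-m_H$, $\tilde V=V-m_V$, $s^2=\mathrm{Var}(V)$, $L=\|\bfh\|_\infty$, and $\theta=1/(1+\omega)$. Use $\partial_{\bfh}\EE_w[F]=\varepsilon^{-1}\mathrm{Cov}(F,H)$ and $\mathrm{Cov}(V^2,H)=\EE[\tilde H\tilde V^2]+2m_V\mathrm{Cov}(H,V)$. One gets
\begin{align*}
\text{KL:}\quad \partial^2_{\mathbf u}\tau&=\tfrac{\alpha\tau}{\varepsilon^2}\big(s^2+\alpha m_V^2\big),\\
\partial_{\bfh}\partial^2_{\mathbf u}\tau&=\tfrac{\alpha\tau}{\varepsilon^3}\big(\EE[\tilde H\tilde V^2]+\alpha m_H(s^2+\alpha m_V^2)+2\alpha m_V\mathrm{Cov}(H,V)\big),\\
\chi^2\text{:}\quad \partial^2_{\mathbf u}\tau&=\tfrac{\omega}{\rho_1}\big(s^2+\theta m_V^2\big),\\
\partial_{\bfh}\partial^2_{\mathbf u}\tau&=\tfrac{\omega}{\rho_1\varepsilon}\big(\EE[\tilde H\tilde V^2]+\theta m_H s^2+\theta^3 m_H m_V^2+2\theta m_V\mathrm{Cov}(H,V)\big).
\end{align*}
The $\theta^3$ term collects the $\bfh$-dependence of $\omega$ (via $\partial_{\bfh}\omega=(1-\theta)m_H/\varepsilon$) and of $\theta$. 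Both formulas reduce to the paper's when $\mathbf u=\bfh$.

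Now apply your bounds $|\tilde H|\le 2L$, $|m_H|\le L$, $|\mathrm{Cov}(H,V)|\le 2Ls$, followed by the \emph{unweighted} AM--GM $2|m_V|s\le s^2+m_V^2$. The brackets are then at most
\begin{itemize}
\item KL: $L\big((2+3\alpha)s^2+(2\alpha+\alpha^2)m_V^2\big)\le(2+3\alpha)L(s^2+\alpha m_V^2)$;
\item $\chi^2$: $L\big((2+3\theta)s^2+(2\theta+\theta^3)m_V^2\big)\le 5L(s^2+\theta m_V^2)$.
\end{itemize}
Balancing the cross term against $s^2+\alpha m_V^2$ instead would give the constant $2+\alpha+2\sqrt\alpha>2+3\alpha$. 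The unweighted version works because the $m_V^2$-budget $2\alpha+3\alpha^2$ has room.

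Integrate in $x$; the integrands $\partial^2_{\mathbf u}\tau$ are nonnegative, so the pointwise bound survives. Adding the quadratic term gives the mixed bound with $M=(2+3\alpha)/\varepsilon$, respectively $6/\varepsilon$. Your Gr\"onwall step then finishes the proof.
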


\bigskip

\section{Proof of Proposition~\ref{prop:asym_smoothness_line}: \texorpdfstring{$(L_0, L_1)$}{(L₀, L₁)}-smoothness}

\begin{proof}
Fix $g\in\mathbb{R}^n$ and consider the segment
\[
g_s := g - s\lambda \nabla J(g), \qquad s\in[0,1],
\]
with $\lambda>0$. Then $\|g_s-g\|_\infty = s\lambda \|\nabla J(g)\|_\infty$. 

\noindent By generalized self-concordance (Corollary~\ref{cor::self_conc_hessian} stated with $\|\cdot\|_\infty$), we have
\[
\nabla^2 J(g_s) \preceq \exp\!\big(M \|g_s-g\|_\infty\big)\,\nabla^2 J(g)
= \exp\!\big(M s\lambda \|\nabla J(g)\|_\infty\big)\,\nabla^2 J(g).
\]

\noindent Choose the step size as $\lambda := 1/\widetilde L(g)$ where
\[
\widetilde L(g)
:=
e\left(\frac{1}{\varepsilon}\|\nabla J_{\mathrm{trans}}(g)\|_\infty + \frac{\beta_{\max}}{\rho_2}\right)
\;+\;
M\|\nabla J(g)\|_\infty.
\]
Then $M\lambda \|\nabla J(g)\|_\infty \le 1$, hence for all $s\in[0,1]$,
\[
\nabla^2 J(g_s) \preceq e\,\nabla^2 J(g),
\quad\text{and thus}\quad
\sup_{s\in[0,1]}\|\nabla^2 J(g_s)\|_{\mathrm{op}}
\le e\,\|\nabla^2 J(g)\|_{\mathrm{op}}.
\]

Next, we upper bound $\|\nabla^2 J(g)\|_{\mathrm{op}}$ by the local smoothness proxy computed from
$\nabla J_{\mathrm{trans}}(g)$. By Theorem~\ref{thm:smoothness_unified},
\[
\|\nabla^2 J(g)\|_{\mathrm{op}}
\le \frac{1}{\varepsilon}\|\nabla J_{\mathrm{trans}}(g)\|_\infty + \frac{\beta_{\max}}{\rho_2}.
\]
Therefore,
\[
\sup_{s\in[0,1]}\|\nabla^2 J(g_s)\|_{\mathrm{op}}
\le
e\left(\frac{1}{\varepsilon}\|\nabla J_{\mathrm{trans}}(g)\|_\infty + \frac{\beta_{\max}}{\rho_2}\right)
\le \widetilde L(g).
\]

Finally, for any two points $g_1,g_2$ on the segment $\{g_s:s\in[0,1]\}$, the mean value theorem yields
\[
\|\nabla J(g_1)-\nabla J(g_2)\|
\le
\left(\sup_{s\in[0,1]}\|\nabla^2 J(g_s)\|_{\mathrm{op}}\right)\|g_1-g_2\|
\le
\widetilde L(g)\,\|g_1-g_2\|.
\]
This concludes the proof.
\end{proof}

\section{Proof of Theorem \ref{thm:pasgd_convergence} : Convergence of PASGD}
\label{app:proof_pasgd}

\textbf{Theorem \ref{thm:pasgd_convergence}: Convergence of PASGD.}
Let the step sizes be chosen as $\eta_t = C t^{-\gamma}$ with $\gamma \in (1/2, 1)$. Under Setting \ref{setting::semi_discrete} and the projection onto $\mathcal{K}$, the averaged iterate $\bar{\mathbf{g}}_T$ converges to the optimum $\mathbf{g}^\star$ in objective value with an expected error of:
\[
\mathbb{E}\left[ \mathcal{J}(\bar{\mathbf{g}}_T) - \mathcal{J}(\mathbf{g}^\star) \right] = \mathcal{O}\left( \frac{n\rho_2^2}{\varepsilon T} \right).
\]

\begin{proof}
We study the projected SGD recursion
\[
g_{t+1}=\Pi_{\K}(g_t-\gamma_t \widehat{\nabla}\J(g_t)),
\qquad 
\bar g_T=\frac1T\sum_{t=1}^T g_t,
\qquad 
\gamma_t=Ct^{-\beta},\ \beta\in(1/2,1).
\]
For the non-averaged iterates, the projection brings no additional difficulty: $\Pi_{\K}$ is $1$-Lipschitz, so all standard
Robbins--Monro estimates based on a one-step expansion remain valid. We therefore rely on the results of \citet{gadat2017optimal},
and we start by verifying their martingale increment assumption.

\textbf{Verification of $(\mathrm{HSC}_{\Sigma_p})$ for all $p$:}
Let
\[
\xi_{t+1}:=\widehat{\nabla}\J(g_t)-\nabla\J(g_t),
\qquad 
\F_t:=\sigma(g_0,X_1,\dots,X_t)
\]
be the noise and the natural filtration. We assume (by construction of the algorithm) that $g_t\in\K$ for all $t$.

For every integer $p\ge 1$, the condition $(\mathrm{HSC}_{\Sigma_p})$ of \citet[Sec.~1.3.3]{gadat2017optimal} holds with
\[
\Sigma_p:=(2C_{\mathrm{bound}})^{2p}.
\]

Indeed, by Lemma~\ref{lem:global_grad_bound}, on $\K$ the transport gradient has a uniform $\ell_1$ bound:
for any $g\in\K$ and any realization of the mini-batch,
\[
\|\widehat{\nabla}\J_{\trans}(g)\|_1\le C_{\mathrm{bound}},
\qquad
\|\nabla\J_{\trans}(g)\|_1\le C_{\mathrm{bound}}.
\]
Since the quadratic term in $\nabla\J$ is deterministic, the noise satisfies
\[
\|\widehat{\nabla}\J(g)-\nabla\J(g)\|_1
=
\|\widehat{\nabla}\J_{\trans}(g)-\nabla\J_{\trans}(g)\|_1
\le \|\widehat{\nabla}\J_{\trans}(g)\|_1+\|\nabla\J_{\trans}(g)\|_1
\le 2C_{\mathrm{bound}}.
\]
Using $\|\cdot\|_2\le \|\cdot\|_1$, we obtain $\|\xi_{t+1}\|_2\le 2C_{\mathrm{bound}}$ almost surely, hence for all $p\ge 1$,
\[
\EE\!\left[\|\xi_{t+1}\|_2^{2p}\mid \F_t\right]\le (2C_{\mathrm{bound}})^{2p}=\Sigma_p.
\]
Finally, since $1+\J(g_t)^p\ge 1$, we also have
\[
\EE\!\left[\|\xi_{t+1}\|_2^{2p}\mid \F_t\right]\le \Sigma_p\bigl(1+\J(g_t)^p\bigr),
\]
which is exactly $(\mathrm{HSC}_{\Sigma_p})$ in the sense of \citet{gadat2017optimal}.

As a consequence, Proposition~1.1 in \citet{gadat2017optimal} applies and yields, for any $p\ge 1$,
\begin{equation}
\label{eq:gp_nonavg_moments}
\EE\big[\|g_t-g^\star\|^{2p}\big]\ \lesssim\ \gamma_t^{p}.
\end{equation}

\textbf{The projection is asymptotically negligible:} While the projection is harmless for the analysis of the non-averaged recursion, it is convenient to show that it becomes
asymptotically inactive. Fix $\delta=1$ and denote $\K=\K_1=\{g:\ g_k\le \rho_2+1\}$.
Consider the event that projection is active at time $t$:
\[
\mathcal{P}_t:=\Big\{g_t-\gamma_t\widehat{\nabla}\J(g_t)\notin \K\Big\}.
\]
Since $g^\star\in\K_0$ (Proposition~\ref{prop:optimality_constraints}) and $\K_0\subset\K$, this event can only happen when
$g_t$ is sufficiently far from $g^\star$. In particular, there exists a constant $r>0$ (depending on $\rho_2$ and $\delta$ only)
such that $\mathcal{P}_t\subset\{\|g_t-g^\star\|_\infty\ge r\}$ for all $t$ large enough.
Therefore, for any integer $p\ge 1$, Markov's inequality and \eqref{eq:gp_nonavg_moments} give
\[
\PP(\mathcal{P}_t)
\le \PP(\|g_t-g^\star\|_\infty\ge r)
\le \frac{\EE\|g_t-g^\star\|^{2p}}{r^{2p}}
\ \lesssim\ \frac{\gamma_t^{p}}{r^{2p}}.
\]
Since $p$ is arbitrary and $\gamma_t=t^{-\beta}$ with $\beta\in(1/2,1)$, we can make $\PP(\mathcal{P}_t)=o(t^{-a})$
for any prescribed $a>0$ by choosing $p$ large enough. Therefore, the projection affects a vanishing fraction of iterates.

\textbf{Averaged iterates and objective error:} We can now invoke Corollary~1.1 in \citet{gadat2017optimal}, which gives the standard Polyak--Ruppert behavior for $\bar g_T$. Noting $\mathbf{H} = \nabla^2 \J(\bfg^\star)$, we have :
\begin{equation}\label{eq:avg_param_rate}
    \mathbb{E}[\|\bar{\mathbf{g}}_T - \mathbf{g}^\star\|^2] \le \frac{\text{Tr}(\mathbf{H}^{-1}\mathbf{\Sigma}\mathbf{H}^{-1})}{T} + o(1/T).
\end{equation}
Given the bounded noise variance $\mathbf{\Sigma}$ (coming from Lemma \ref{lem:global_grad_bound}), this term scales as $\mathcal{O}(\rho_2^2 n^2/T)$.

We now convert \eqref{eq:avg_param_rate} into an objective bound using the generalized self-concordance of the semi-dual (Proposition \ref{prop::self_concordance}).
Locally, the smoothness at $g^\star$ satisfies $L(g^\star)\asymp 1/(n\varepsilon)$ (Corollary~\ref{cor:local_conditioning}),
and Corollary~\ref{cor::self_conc_hessian} ensures that along a neighborhood of $g^\star$,
\[
L(\bar \bfg_T)\ \le\ \exp\!\Big(M\|\bar g_T-g^\star\|_\infty\Big)\,L(\bfg^\star),
\qquad M=\frac{2+3\alpha}{\varepsilon}\ \ \text{(KL-source)}.
\]
Splitting on the event $\{\|\bar g_T-g^\star\|\le \varepsilon\}$ and using the above local control yields
\begin{align*}
\EE\big[\J(\bar \bfg_T)-\J^\star\big]
\;\le\;
\frac{e^{M\varepsilon}L(g^\star)}{2}\,\EE\big[\|\bar \bfg_T-\bfg^\star\|^2\big]
\;+\;
\frac{C_1}{\varepsilon}\,\PP\big(\|\bar \bfg_T-\bfg^\star\|\ge \varepsilon\big),
\end{align*}
for a finite constant $C_1$ (depending only on $\K$ through the crude curvature bound on $\K$).

Finally, using Theorem 4.4 in \cite{godichon2019lp}, we have high-order moment of averaged iterates
\begin{equation}
\label{eq:gp_avg_moments}
\EE\big[\|\bar{\bfg}_t-\bfg^\star\|^{2p}\big]\ \lesssim\ \frac{1}{t^p}.
\end{equation}
Again, using Markov's inequality with high order moment gives 
\[
\PP\big(\|\bar g_T-g^\star\|\ge \varepsilon\big) = o(t^{-a})
\]
for all $a$, which concludes.

\end{proof}
\paragraph{Remark.}
This adaptivity argument uses a step-size schedule $\gamma_t\propto t^{-\beta}$ with $\beta<1$ to ensure good control of higher
moments and tail probabilities, which would have not been possible with the non-averaged iterates, using $\gamma_t \propto \tfrac{1}{t}$.

\section{Proof of Theorem  \ref{th::anag_cv}: Adaptive NAG Convergence Rate}\label{app::proof_th::anag_cv}

\textbf{Theorem \ref{th::anag_cv} : Adaptive NAG Convergence Rate.} 
     Let $R$ be the number of restart. Then, the iterates generated by Algorithm \ref{alg:adaptive_nag} satisfy
    \begin{equation*}
    \mathcal{J}(g_{T+1}) - \mathcal{J}^\star
    \leq
    2^{R}\left( \mathcal{J}(g_0) -\mathcal{J}^\star + \frac{\beta_{\min}}{2\rho_2}\|g_0 - g^\star\|^2\right)
    \prod_{t=0}^{T}\left( 1 - \sqrt{\frac{\beta_{\min}}{\rho_2 L_t}}\right),
    \end{equation*}
    
    \noindent Furthermore, the algorithm ensures $y_t \in K_1$ for all $t$, so using $C_{\text{bound}}$ from Lemma \ref{lem:global_grad_bound}, we have $L_t \leq \bar{L} = \mathcal{O}(C_{\text{bound}}/\varepsilon)$ for all $t$. This implies the following rates:
    \begin{enumerate}
        \item \textbf{Global Rate:}  We have at least the contraction rate $1 - \mathcal{O}(\sqrt{\varepsilon/\beta_{\min}\rho_2})$ for both the objective gap and gradient norm.
        \item \textbf{Local Rate:} Since $L_t \leq 2\beta_{\max}\left(\frac{1}{\varepsilon} + \frac{1}{\rho_2} \right) + 2\|\nabla \mathcal{J}(g_{T+1})\|$, assuming $\beta_{\min}/\beta_{\max} \simeq 1$ and substituting the contraction rate of the gradient norm into \eqref{eq:nag_adaptive_rate} shows that, locally, we have a contraction rate of $1 - \mathcal{O}(\sqrt{\varepsilon/\rho_2})$.
    \end{enumerate}

\begin{proof}

\textbf{Contraction of the potential function when we do not restart.}
The restart schemes permit us to stay in the region $K_{\delta}$, where we fix $\delta = 1$. We analyze here the contraction, when restart is does not happen. 

Following the Lyapunov analysis for accelerated methods \citep{nesterov2015universal, bansal2019potential}, we define the potential function at iteration $t$ as:
\begin{equation}
    \Psi_t \triangleq \frac{1}{\sqrt{L_{t-1}}} \left( \mathcal{J}(g_t) - \mathcal{J}^\star + \frac{\alpha}{2} \|z_t - g^\star\|^2 \right),
\end{equation}
where $z_t := \bfg_t + \left(\sqrt{L_t/\alpha} - 1\right)(\bfg_t - \bfg_{t-1})$ is the auxiliary sequence, $\alpha$ is the strong convexity parameter of our function (here = $\frac{\beta_{\min}}{\rho_2}$), $L_t$ is the smoothness bound on the segment $[y_t, \bfg_{t+1}]$ and by convention $L_{-1} = L_0$.

In \cite{bansal2019potential}, the proof there bounds $\Delta\Psi_t$ by combining (i) one smoothness-based decrease inequality
for the gradient step and (ii) an algebraic expansion of the quadratic term in the potential
(\cite[eqs.~(5.25)--(5.27)]{bansal2019potential}). 
In our constrained case, the algebraic part is unchanged; only (i) changes.

Indeed, our update is the projected step
$g_{t+1}=\Pi_{\mathcal K}(y_t-\tfrac{1}{L_t}\nabla \mathcal J(y_t))$.
Define the gradient-mapping residual $\Delta_t := L_t(y_t-g_{t+1})$.
By Proposition~\ref{prop:asym_smoothness_line}, $\mathcal J$ is $L_t$-smooth on the segment
between $y_t$ and $g_{t+1}$, and by optimality of the projection we obtain the projected descent
inequality
\[
\mathcal J(g_{t+1})
\le \mathcal J(y_t)-\frac{1}{2L_t}\|\Delta_t\|^2,
\]
which replaces the unconstrained inequality
$f(y_{t+1})\le f(x_t)-\frac{1}{2\beta}\|\nabla_t\|^2$ used in \cite[p.~28]{bansal2019potential}, where $\nabla_t = \nabla f(x_t)$ with their notations.

With this substitution (replace $\nabla_t$ by $\Delta_t$ and $\beta$ by $L_t$),
the remainder of the potential-change calculation is identical to
\cite[eqs.~(5.25)--(5.27)]{bansal2019potential}, accounting here for the difference between $L_{t-1}$ and $L_t$, yielding the one-step contraction,
\[
\Psi_{t+1} \le \sqrt{\frac{L_{t-1}}{L_t}}\left(1-\sqrt{\frac{\alpha}{L_t}}\right)\Psi_t,
\]
and unrolling gives
\[
\mathcal J(g_{T+1})-\mathcal J^\star
\le \Phi_0\prod_{t=0}^T\left(1-\sqrt{\frac{\alpha}{L_t}}\right).
\]

\textbf{Taking into account the restarts.}
We implement a safeguard restart to ensure that all smoothness arguments are made inside the bounded region
$\mathcal K_1$. Concretely, at any iteration $t$ such that the extrapolated point leaves the safe set,
$y_t\notin\mathcal K_1$, we restart by resetting the acceleration state (zeroing the momentum):
\[
y_t \leftarrow g_t,\qquad g_{t-1}\leftarrow g_t,
\]
so that the auxiliary variable definition $z_t = g_t + (\sqrt{L_t/\alpha}-1)(g_t-g_{t-1})$ is consistent and implies
$z_t=g_t$. We also re-initialize the scalar parameters of the Lyapunov construction for the new epoch:  we start a new epoch with local index $s=t$ and enforce $L_{s-1}=L_s$ by convention.
The projected update
$g_{t+1}=\Pi_{\mathcal K}(y_t-\tfrac{1}{L_t}\nabla \mathcal J(y_t))$ is then performed from $y_t=g_t\in\mathcal K_{0.1}\subset\mathcal K_1$,
so that Proposition~\ref{prop:asym_smoothness_line} applies on the segment $[y_t,g_{t+1}]$.

Define the unnormalized potential
\[
E_t :=\sqrt{L_{t-1}}\Psi_t
= \mathcal{J}(g_t) - \mathcal{J}^\star + \frac{\alpha}{2} \|z_t - g^\star\|^2.
\]
Multiplying the one-step inequality
\[
\Psi_{t+1} \le \sqrt{\frac{L_{t-1}}{L_t}}\left(1-\sqrt{\frac{\alpha}{L_t}}\right)\Psi_t
\]
by $\sqrt{L_t}$ yields the clean contraction
\[
E_{t+1} \le \left(1-\sqrt{\frac{\alpha}{L_t}}\right)E_t,
\]
valid at any iteration where we do not restart (i.e., when $y_t\in\mathcal K_1$ and the Lyapunov update is applied).
Let $0=\tau_0<\tau_1<\cdots<\tau_R\le T$ denote the restart times (epoch starts), and set $\tau_{R+1}:=T+1$.
On each epoch $[\tau_j,\tau_{j+1})$, unrolling the above inequality gives
\[
E_{\tau_{j+1}}
\le
\left(
\prod_{t=\tau_j}^{\tau_{j+1}-1}\left(1-\sqrt{\frac{\alpha}{L_t}}\right)
\right)
E_{\tau_j}^+,
\]
where $E_{\tau_j}^+$ denotes the value of $E$ \emph{after} the restart initialization at time $\tau_j$
(and $E_{\tau_0}^+=E_0$).
At each restart time $\tau_j$ with $j\ge 1$, we have $z_{\tau_j}=g_{\tau_j}$ and by $\alpha$-strong convexity
$\mathcal J(g_{\tau_j})-\mathcal J^\star \ge \frac{\alpha}{2}\|g_{\tau_j}-g^\star\|^2$, hence
\[
E_{\tau_j}^+
= \mathcal J(g_{\tau_j})-\mathcal J^\star+\frac{\alpha}{2}\|g_{\tau_j}-g^\star\|^2
\le 2\big(\mathcal J(g_{\tau_j})-\mathcal J^\star\big)
\le 2E_{\tau_j}^-,
\]
where $E_{\tau_j}^-$ denotes the value of $E$ just before restarting (same $g_{\tau_j}$, previous $z_{\tau_j}$).
Iterating over epochs and using $E_{t}\ge \mathcal J(g_t)-\mathcal J^\star$ yields the global bound
\[
\mathcal J(g_{T+1})-\mathcal J^\star
\le
2^{R}\left(\mathcal J(g_0)-\mathcal J^\star+\frac{\alpha}{2}\|z_0-g^\star\|^2\right)
\prod_{t=0}^{T}\left(1-\sqrt{\frac{\alpha}{L_t}}\right).
\]
In particular, since $z_0=g_0$ at initialization, we have $\|z_0-g^\star\|=\|g_0-g^\star\|$. Moreover, $L_t\le \bar L$, where $\bar{L} = O(C_{\text{bound}}/\varepsilon)$ using that all iterations are in $\mathcal K_1$. We thus conclude
\[
\mathcal J(g_{T+1})-\mathcal J^\star
\le
2^{R}\left(\mathcal J(g_0)-\mathcal J^\star+\frac{\alpha}{2}\|g_0-g^\star\|^2\right)
\left(1-\sqrt{\frac{\alpha}{\bar L}}\right)^{T+1}.
\]

\end{proof}

\section{Towards Optimal Adaptive Step Sizes}\label{app::towards}

Optimal rates for ASGD are obtained with a learning rate of $\mathcal{O}\left(\frac{1}{L + \beta k}\right)$ for $L$-smooth and $\beta$-strongly convex functions~\cite{stich2019unified}. In this case, the rate is:
\begin{align*}
    \mathbb{E}f(\bar{\mathbf{x}}_T) - f^{\star} + \beta \mathbb{E}\|\mathbf{x}_{T+1} - \mathbf{x}^{\star}\|^2 = \tilde{\mathcal{O}} \left( L R^2 \exp \left[ -\frac{\beta T}{2L} \right] + \frac{\sigma^2}{\beta T} \right)\ .
\end{align*}
In our context, if we were near the minimum, this would lead to a complexity of $\mathcal{O}(\rho_2/\varepsilon)$ to eliminate the transient exponential term. This results in a total complexity of $\mathcal{O}(n\rho_2/ T)$ for any $\varepsilon$, whereas a global bound would require $\mathcal{O}(\rho_2n/\varepsilon)$ to handle this exponential term.

In a broader context, the motivation for employing a learning rate of the form $(1/L_t + \beta k)^{-1}$ and leveraging adaptive smoothness was previously investigated by~\cite{malitsky2019adaptive}. There, the authors proposed a local estimator of smoothness as a heuristic; however, this approach did not yield theoretical acceleration and resulted in worse constants.

While we will not be able to strictly use or prove this optimal rate here, we remark that for $\mathcal{J}$, another natural choice leads to a similar schedule. A classical schedule for $\gamma_t \propto 1/t$ sets the constant as the inverse of the strong convexity, i.e., $\gamma_t^{sc} = \frac{n}{C\rho_2}$. Concurrently, assuming $C/n = \beta_{\min} = \beta_{\max}$ for clarity, the optimal learning rate from~\cite{stich2019unified} near the optimum would be:
\[
    \gamma_t^{\text{opt}} = \frac{1}{1/L(\mathbf{g}^\star) + \beta t} \simeq \frac{n}{C(1/\varepsilon + \rho_2t)}\ .
\]
We observe that, in this case, the two learning rates differ simply by an offset of $1/\varepsilon$.

\begin{figure}[H]
    \centering
    \includegraphics[width=0.4\linewidth]{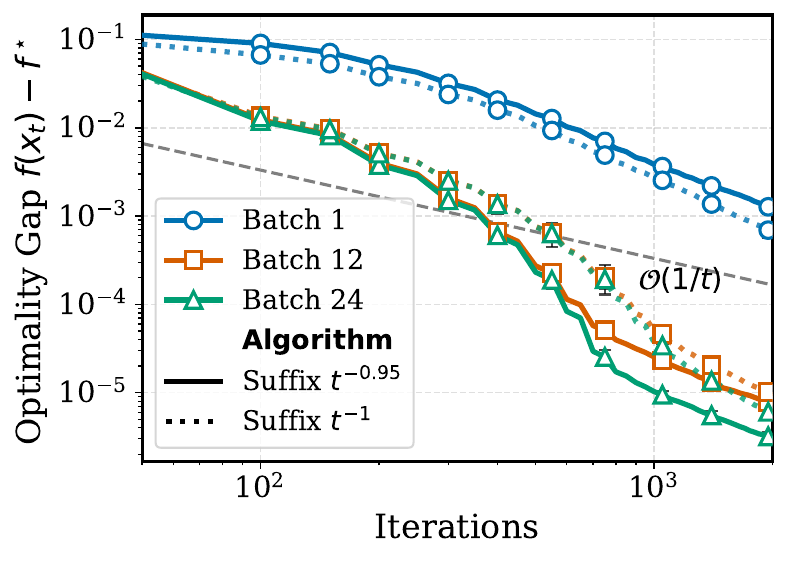}
    \caption{Comparison of ASGD convergence rates using the optimal learning rate schedule $\gamma_t^{\text{opt}} = (1/L(\mathbf{g}^\star) + \beta t)^{-1}$ versus the polynomial decay $\gamma_t = (1/L(\mathbf{g}^\star) + \beta t^b)^{-1}$ with $b=0.95$. Here $ \beta_{\min} = \beta_{\max} = 1/n$ with $n = 2000$.  Both methods employ suffix averaging (averaging the last half of iterates), as recommended in~\cite{stich2019unified}. We display results for minibatch sizes of 1, 12, and 24, averaged over 20 independent runs. We observe that both learning rate schedules perform equally well. Variance is omitted from the plot as it is negligible.}
    \label{fig:suffix_comparison}
\end{figure}

Although we do not provide a proof of the true adaptivity of our SGD to local strong convexity for the $\propto 1/t$ learning rate, we provide next a proof of ASGD for any learning rate $\gamma_t \propto 1/t^b$ where $b \in (0,1)$. We select $b < 1$ to ensure the convergence of all moments of our SGD scheme. This allows us to demonstrate a mild adaptivity phenomenon by taking $\gamma_t \simeq \frac{n}{C(1/\varepsilon + \rho_2t^b)}$ with $b$ very close to $1$. In contrast, we would not be able to prove this adaptivity with $\gamma_t \propto 1/t$ and it would lead to worse constants in $n$ and $1/\varepsilon$.

\section{Additional Experimental Results}
\label{app:experiments}

\textbf{Comparison with Sinkhorn.} We benchmark our Adaptive NAG (ANAG) method against the Translation Invariant (TI) Sinkhorn algorithm \citep{sejourne2022faster} on a standard color transfer task. We select 20 random image pairs from the CIFAR-10 dataset, treating pixels as point clouds in RGB space ($n=4096$). Both solvers are run with regularization $\varepsilon=0.01$ and marginal penalty $\rho=10$.

It is important to note that ANAG minimizes the $\KL-\chi^2$ semi-dual, while TI-Sinkhorn solves the standard $\KL-\KL$ formulation. However, in this regime, the resulting optimal couplings and transport costs are nearly identical, justifying a direct comparison of their convergence profiles. We establish a ground truth value $f^\star$ by running each solver for $20,000$ iterations and report the median relative objective gap $(f(x_t) - f^\star)/|f^\star|$ in Figure~\ref{fig:cifar_benchmark}.

\begin{figure}[h]
    \centering
    \includegraphics[width=0.5\linewidth]{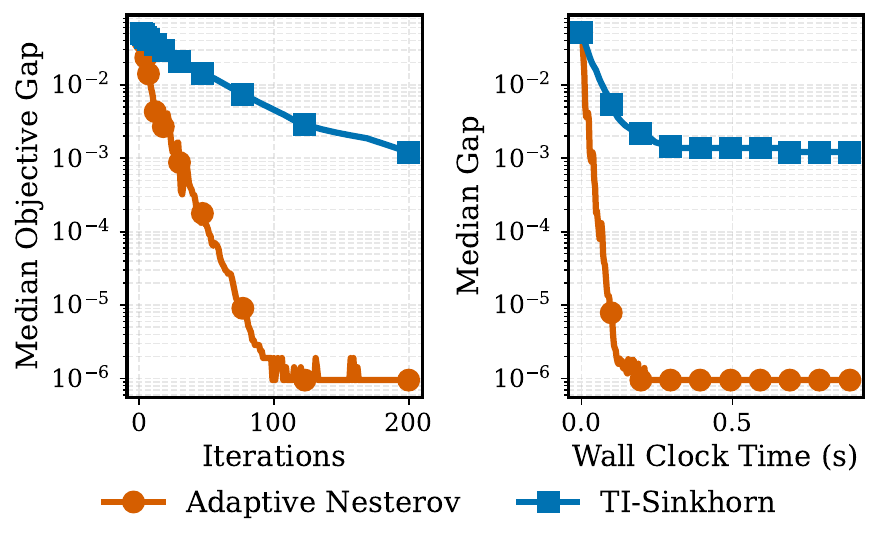}
    \caption{\textbf{ANAG vs. TI-Sinkhorn.} Median objective gap convergence on CIFAR-10 color transfer tasks ($20$ pairs of size $n = 4096$). ANAG demonstrates competitive convergence rates compared to the TI-Sinkhorn algorithm (solving $\text{KL}-\text{KL}$), validating the efficiency of the adaptive scheme on standard semi-discrete tasks.}
    \label{fig:cifar_benchmark}
\end{figure}

\end{document}